\documentclass[a4paper,11pt,twoside]{article}
\usepackage{latexsym,amsfonts,index,amssymb}
\usepackage[centertags]{amsmath}
\usepackage{amstext}
\usepackage{enumerate}
\usepackage{cases}
\usepackage{enumitem}

\pagestyle{myheadings}

\def\titlerunning#1{\gdef\titrun{#1}}
\makeatletter
\def\author#1{\gdef\autrun{\def\and{\unskip, }#1}\gdef\@author{#1}}
\def\address#1{{\def\and{\\\hspace*{18pt}}\renewcommand{\thefootnote}{}%
\footnote {#1}}%
\markboth{\autrun}{\titrun}} \makeatother
\def\email#1{e-mail: #1}
\def\subjclass#1{{\renewcommand{\thefootnote}{}%
\footnote{\emph{Mathematics Subject Classification (2010):} #1}}}
\def\keywords#1{\par\medskip
\noindent\textbf{Keywords.} #1}
\def\subrel#1#2{\mathrel{\mathop{#2}\limits_{#1}}}

\setenumerate[1]{label=$(\alph*)$,leftmargin=*}
\setenumerate[2]{label=$(\roman*)$,leftmargin=*}

\numberwithin{equation}{section}

 \topmargin -1cm
\addtolength{\textwidth}{3cm} \addtolength{\textheight}{2cm}
 \evensidemargin 2mm
 \oddsidemargin 2mm

\newcommand{\lp}[1]{\!\stackrel{\!\!\!\!{\scriptscriptstyle{#1}\!\!\!\!}}{{\scriptstyle
(}}\!}
\newcommand{\rp}[1]{\!\stackrel{\!\!\!\!{\scriptscriptstyle{#1}\!\!\!\!}}{{\scriptstyle )}}\!}
\newcommand{\lpp}[1]{\stackrel{\!\!\!\!{\scriptscriptstyle{#1}}\!\!\!\!}{(}}
\newcommand{\rpp}[1]{\stackrel{\!\!\!\!{\scriptscriptstyle{#1}\!\!\!\!}}{)}}

\def\rk#1{\ensuremath{\mathop{\text{rank}}({#1})}}

\def\Z{{\mathbb Z}}
\newcommand{\A}{{\bf A}}
\newcommand{\Se}{{\bf S}}

\newcommand{\G}{{\bf G}}
\newcommand{\Nil}{{\bf N}}
\newcommand{\LG}{{\bf LG}}
\newcommand{\LSl}{{\bf LSl}}
\newcommand{\V}{{\bf V}}
\newcommand{\OAS}{\overline{\Omega}_A{\bf{S}}}
\newcommand{\bk}{\bar\kappa}

\newcommand{\omek}[3] {{{\Omega}}^{#1}_{#2}{\mathbf #3}}

\newtheorem{theorem}{Theorem}[section]
\newtheorem{proposition}[theorem]{Proposition}
\newtheorem{corollary}[theorem]{Corollary}
\newtheorem{fact}[theorem]{Fact}
\newtheorem{lemma}[theorem]{Lemma}
\newenvironment{claim}[1][Claim]{\begin{trivlist}
\item[\hskip \labelsep {\bfseries #1}]}{\end{trivlist}}

\newtheorem{definition}[theorem]{Definition}
\newtheorem{remark}[theorem]{Remark}
\newtheorem{example}[theorem]{Example}
\newenvironment{definition*}{\begin{trivlist}\item[\hskip
    \labelsep{\bf Definition\quad}]}%
  {\hfill\qed\end{trivlist}}
\newenvironment{notation*}{\begin{trivlist}\item[\hskip
    \labelsep{\bf Notation\quad}]}%
  {\end{trivlist}}

  \def\qed{{\unskip\nobreak\hfil\penalty50\hskip .001pt\hbox{}%
      \nobreak\hfil
      \vrule height 1.2ex width 1.1ex depth -.1ex
      \parfillskip=0pt\finalhyphendemerits=0\medbreak}}
\newenvironment{proof}{\begin{trivlist}\item[\hskip%
     \labelsep{\bf Proof.\quad}]}%
 {\hfill\qed\rm\end{trivlist}}

  {\hfill\qed\end{trivlist}}%

\begin{document}
 \titlerunning{Canonical forms for free $\kappa$-semigroups}

\title{Canonical forms for free $\kappa$-semigroups}

\author{Jos\'{e} Carlos Costa}

\date{January 6, 2014}

\maketitle

\address{Jos\'{e} Carlos Costa: CMAT, Dep.\ Matem\'{a}tica e Aplica\c{c}\~{o}es, Universidade do Minho, Campus
  de Gualtar, 4700-320 Braga, Portugal; %
  \email{jcosta@math.uminho.pt}}

\subjclass{Primary 20M05, 20M07; Secondary  68Q70}

\begin{abstract}
The implicit signature $\kappa$ consists of the multiplication and the $(\omega-1)$-power. We describe a
procedure to transform each $\kappa$-term over a finite alphabet $A$ into a certain canonical form and show that
different canonical forms have different interpretations over some finite semigroup.  The procedure of
construction of the canonical forms, which is inspired in McCammond's normal form algorithm for $\omega$-terms
interpreted over the pseudovariety $\A$ of all finite aperiodic semigroups, consists in applying elementary
changes determined by an elementary set $\Sigma$ of pseudoidentities. As an application, we deduce that the
variety of $\kappa$-semigroups generated by the pseudovariety $\Se$ of all finite semigroups is defined by the
set $\Sigma$ and that the free $\kappa$-semigroup generated by the alphabet $A$ in that variety has decidable word problem.
Furthermore, we show that each $\omega$-term has a unique $\omega$-term in canonical form with the same value
over $\A$. In particular, the canonical forms provide new, simpler, representatives for $\omega$-terms
interpreted over that pseudovariety.

  \keywords{Pseudovariety, implicit signature, $\kappa$-term, word
    problem, McCammond's normal form, finite
    semigroup, $\kappa$-semigroup, regular language.}
\end{abstract}

\section{Introduction}
A $\kappa$-term is a formal expression obtained from the letters of an alphabet $A$ using two operations: the
binary, associative, concatenation and the unary $(\omega-1)$-power. Instead of working only with
$\kappa$-terms, we will operate in a larger set $T_A^{\bk}$ of terms, called $\bk$-terms
 in~\cite{Almeida&Costa&Zeitoun:2013} (in which $\bk$ is called the \emph{completion} of $\kappa$), obtained from $A$ using
the binary concatenation and the unary $(\omega+q)$-power for each integer $q$. Any $\bk$-term can be given a
natural interpretation on each finite semigroup $S$: the concatenation is interpreted as the semigroup
multiplication while the $(\omega+q)$-power is the unary operation which sends each element $s$ of $S$ to:
$s^{\omega}$, the unique idempotent power of $s$, when $q=0$;
 $s^{\omega}s^{q}$, denoted $s^{\omega+q}$, when $q>0$; the inverse of $s^{\omega-q}$ in the maximal subgroup containing
$s^\omega$, when $q<0$. For a class $\mathcal{C}$ of finite semigroups and $\bk$-terms $\alpha$ and $\beta$, we
say that $\mathcal{C}$ satisfies the $\bk$-identity $\alpha=\beta$, and write $\mathcal{C}\models\alpha=\beta$,
if $\alpha$ and $\beta$ have the same interpretation over every semigroup of~$\mathcal{C}$. The $\bk$-word
(resp.\ $\kappa$-word) problem for $\mathcal{C}$ consists in deciding, given a $\bk$-identity (resp.\ a
$\kappa$-identity) $\alpha=\beta$, whether $\mathcal{C}\models\alpha=\beta$. The $\kappa$-word problem for
$\mathcal{C}$ is certainly a subproblem of the $\bk$-word problem for $\mathcal{C}$. Conversely, each finite
semigroup verifies $x^{\omega+q}=x^{\omega-1}x^{q+1}$ for $q\geq 0$ and $x^{\omega+q}=(x^{\omega-1})^{-q}$ for
$q<0$. This means that for each $\bar\kappa$-term there exists a well determined $\kappa$-term with the same
interpretation over every finite semigroup. As a consequence, the word problems for $\kappa$-terms and for
$\bk$-terms over $\mathcal{C}$ are  equivalent problems.

A pseudovariety of semigroups is a class of finite semigroups closed
under taking subsemigroups, homomorphic images and finite direct
products. We also remember that $\kappa$ and $\bk$ are instances of
so called implicit signatures~\cite{AlmeidaSteinberg:2000}, that is,
sets of implicit operations on finite semigroups containing the
multiplication. A motivation to prove the decidability of the
$\sigma$-word problem, for an implicit signature $\sigma$ and a
pseudovariety ${\bf V}$, is that this is one of the properties
required for ${\bf V}$ to be a $\sigma$-\emph{tame} pseudovariety.
The tameness property of pseudovarieties was introduced by
Almeida and  Steinberg~\cite{AlmeidaSteinberg:2000} with the
purpose of solving the decidability problem for iterated semidirect
products of pseudovarieties. Although that objective has not yet
been reached, tameness has proved to be of interest to solve
membership problems involving other types of
operators~\cite{Almeida&Costa&Zeitoun:2005}. For pseudovarieties of
aperiodic semigroups it is common to use the signature $\omega$,
consisting of the multiplication and the $\omega$-power. A solution
to the $\omega$-word problem has been obtained for the pseudovariety ${\bf A}$ of all finite aperiodic semigroups~\cite{McCammond:2001,Zhiltsov:1999} as well as for some of its most important subpseudovarieties such as ${\bf J}$ of ${\cal J}$-trivial
semigroups~\cite{Almeida:1990}, ${\bf LSl}$ of local
semilattices~\cite{Costa:2001} and
${\bf R}$ of ${\cal R}$-trivial
semigroups~\cite{Almeida&Zeitoun:2007}. For non-aperiodic examples,
in which the $\omega$-power is not enough, we refer to the
pseudovarieties ${\bf CR}$ of completely regular
semigroups~\cite{Almeida&Trotter:2001} and ${\bf LG}$ of local
groups~\cite{Costa&Nogueira&Teixeira:2013b} for which the
$\kappa$-word problem is solved.

In this paper, we study the $\bk$-word problem (and so,
equivalently, the $\kappa$-word problem) for the pseudovariety ${\bf
S}$ of all finite semigroups. A positive solution to this problem has been announced and outlined by Zhil'tsov in~\cite{Zhiltsov:2000} but, unfortunately, the author died without publishing a full version of that note. Our approach is completely independent and consists of three stages. First, we declare some
elements of $T_A^{\bk}$ to be in a certain \emph{canonical form}.
Next, we show that an arbitrary $\bk$-term can be algorithmically
transformed into one in canonical form and with the same value over
${\bf S}$. Finally, we prove that distinct canonical forms have
different interpretations on some finite semigroup. This shows that
for each $\bk$-term there is exactly one in canonical form with the
same value over $\Se$. To test whether a $\bk$-identity
$\alpha=\beta$ holds over $\Se$, it then suffices to verify if the
canonical forms of the $\bk$-terms $\alpha$ and $\beta$ are equal,
thus proving the decidability of the word problem for
$\omek{\bk}{A}{\Se}$, the free $\bk$-semigroup on $A$, via the
homomorphism of $\bk$-semigroups $\eta:T_A^{\bk}\rightarrow
\omek{\bk}{A}{\Se}$ that sends each $a\in A$ to itself. The
canonical forms we use, as well as the procedure of their
construction, are close to the normal forms introduced by
McCammond~\cite{McCammond:2001}  for $\omega$-terms over ${\bf A}$.
For this reason, we adopt some of McCammond's terminology. The proof
of correctness of our algorithm is achieved by associating to each
$\bk$-term $\alpha$ a family of regular languages $L_{{\mathtt
n},{\mathtt p}}(\alpha)$, where ${\mathtt n}$ and ${\mathtt p}$ are
positive integers. The key property is that, if $\alpha$ and $\beta$
are $\bk$-terms in canonical form such that $L_{{\mathtt n},{\mathtt
p}}(\alpha)\cap L_{{\mathtt n},{\mathtt p}}(\beta)\neq\emptyset$ for
large enough ${\mathtt n}$ and ${\mathtt p}$, then $\alpha=\beta$.
This approach is similar to the one followed by Almeida and Zeitoun
in collaboration with the author~\cite{Almeida&Costa&Zeitoun:2012}
to give an alternative proof of correctness over $\A$ of McCammond's
normal form reduction algorithm for $\omega$-terms.

Denote by $T_A^{\omega}$ the subset of $T_A^{\bk}$ formed by all
$\omega$-terms. The subset of the elements of $T_A^{\omega}$ that
are in canonical form does not coincide with the set of McCammond's
$\omega$-terms in normal form. Although the notions of canonical
form and (McCammond's) normal form for $\omega$-terms are similar,
our definition introduces an essential modification in the
conditions of the normal form. This change makes in general a
canonical form be shorter than its normal form. For instance, the
$\omega$-term $(a^\omega b^\omega)^\omega$ is in canonical form
while its normal form is $(a^\omega abb^\omega ba)^\omega a^\omega
abb^\omega$. Moreover each subterm of a canonical form is also a
canonical form, a property that  is useful in inductive proofs and
that fails for normal forms. Furthermore, we show that
 each $\omega$-term has a unique representative in canonical form with the same interpretation over $\A$.

The paper is organized as follows. In
Section~\ref{section:preliminaries}, we review background material
and set the basic notation for $\bk$-terms. We introduce the
$\bk$-terms canonical form definition in Section 3 and prove some of
their fundamental properties. Section 4 is devoted to the
description of the algorithm to transform any given $\bk$-term into
one in canonical form. The languages $L_{{\mathtt n},{\mathtt
p}}(\alpha)$ and their basic characteristics are determined in
Section 5. In Section 6, we complete the proof of the main results
of the paper. Finally, Section 7 attests the uniqueness of canonical
forms for $\omega$-terms over $\A$.

\section{Preliminaries}\label{section:preliminaries}
In this section we begin by briefly reviewing the main definitions and some facts about combinatorics on words
and profinite semigroups. The reader is referred to~\cite{Lothaire:2002,Almeida:1995,Almeida:2002} for further
details about these topics. We then introduce a representation of $\bk$-terms as well-parenthesized words that
extends the representation of $\omega$-words used by McCammond and set up the basic terminology on these
objects.

\paragraph{\textbf{Words.}} Throughout the paper, we work with a finite alphabet $A$. The free semigroup (resp.\ the
free monoid) generated by $A$ is denoted by $A^+$ (resp.\ $A^*$). An element $w$ of $A^*$ is called a (finite)
word and its length is represented by $|w|$. The empty word is denoted by $1$ and its length is $0$. The
following result is known as Fine and Wilf's Theorem (see~\cite{Lothaire:2002}).
\begin{proposition}\label{prop:fine_wilf}
Let $u,v\in A^+$. If two powers $u^k$ and $v^n$ of $u$ and $v$ have
a common prefix of length at least $|u|+|v|-gcd(|u|,|v|)$, then $u$
and $v$ are powers of the same word.
\end{proposition}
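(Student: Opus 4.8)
The plan is to recast the statement in terms of \emph{periods} and then prove the resulting periodicity lemma by a Euclidean-type induction. Call $t\ge 1$ a period of a word $x$ if $x_i=x_{i+t}$ whenever $1\le i$ and $i+t\le|x|$. Put $p=|u|$, $q=|v|$ and $g=\gcd(p,q)$, and assume without loss of generality $p\ge q$. Let $w$ be the common prefix of $u^k$ and $v^n$ of length at least $p+q-g$; since $g\le q\le p$ this already forces $|w|\ge p$. Being a prefix of a power of $u$, the word $w$ has period $p$, and similarly it has period $q$. Thus Proposition~\ref{prop:fine_wilf} reduces to the claim: \emph{every word of length at least $p+q-\gcd(p,q)$ having periods $p$ and $q$ also has period $\gcd(p,q)$.}

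I would prove this claim by induction on $p+q$. The base case $p=q$ is immediate, since then $\gcd(p,q)=p$. For the inductive step assume $p>q$; the core is to show that a word $x$ of length at least $p+q-g$ with periods $p$ and $q$ inherits the period $p-q$. For a position $i$ with $i+(p-q)\le|x|$ one obtains $x_i=x_{i+p-q}$ either from $x_i=x_{i+p}=x_{i+p-q}$ (when $i+p\le|x|$, using periods $p$ then $q$) or from $x_i=x_{i-q}=x_{i-q+p}=x_{i+p-q}$ (when $i\ge q+1$, using periods $q$ then $p$); the positions left uncovered, those with $q-g<i\le q$, are handled by a further short descent, and it is exactly here that the sharp bound $|x|\ge p+q-g$ is consumed. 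Once $x$ is known to have periods $p-q$ and $q$, one notes $\gcd(p-q,q)=g$ and $|x|\ge p+q-g\ge(p-q)+q-g$, so the inductive hypothesis applies to the pair $(p-q,q)$, whose sum is smaller, giving that $x$ has period $g$.

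Finally I would translate back. Since $w$ has period $g$ and $|w|\ge p\ge g$, letting $z$ be the length-$g$ prefix of $w$ we see that $w$ is a prefix of $zzz\cdots$; as $g$ divides both $p$ and $q$ while $|w|\ge p\ge q$, the length-$p$ prefix of $w$ equals $z^{p/g}$ and the length-$q$ prefix equals $z^{q/g}$. But those prefixes are precisely $u$ and $v$, so $u=z^{p/g}$ and $v=z^{q/g}$ are powers of the common word $z$, as required. The one delicate point is the treatment of the $g$ boundary positions in the period-subtraction step of the induction: that is where the hypothesis on the length of the common prefix is tight (it cannot be weakened, as the common prefix $aba$ of the powers of $aba$ and of $ab$ shows, with $p=3$, $q=2$), and making this bookkeeping precise is the main obstacle; the reduction to the periodicity lemma and the translation back are routine.
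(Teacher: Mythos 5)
The paper does not prove this statement at all --- it is quoted as the classical Fine and Wilf theorem with a citation to Lothaire --- so there is no internal proof to compare against; your proposal must stand on its own. Your overall strategy (reduce to the periodicity lemma ``periods $p$ and $q$ plus length $\ge p+q-\gcd(p,q)$ imply period $\gcd(p,q)$'', prove it by Euclidean descent on $p+q$, then translate back) is the standard and correct one, and the reduction and the final translation back are carried out correctly: $w$ does acquire periods $p$ and $q$ as a common prefix of $u^k$ and $v^n$ of length $\ge p$, and once $w$ has period $g$ with $g\mid p$, $g\mid q$ and $|w|\ge p\ge q$, the identification $u=z^{p/g}$, $v=z^{q/g}$ is immediate. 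The tightness example $aba$ is also correct.

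The genuine gap is exactly where you flag it: the positions $q-g<i\le q$ with $i+p>|x|$ in the period-subtraction step. Saying they are ``handled by a further short descent'' is not a proof --- those positions are the entire content of the lemma (for shorter words the conclusion is false, as your own example shows), and the descent you gesture at ($x_i=x_{i+q}$, then compare positions differing by $|p-2q|$, and so on) amounts to re-running the Euclidean algorithm at the level of individual letters, with range constraints that must be checked at every stage; as written, nothing guarantees the chain of intermediate positions stays inside $x$. A clean way to close the gap is to avoid claiming period $p-q$ on all of $x$: instead show that the \emph{prefix} $x'$ of $x$ of length $|x|-q$ has periods $q$ and $p-q$ (for $i+(p-q)\le|x'|$ one always has $i+p\le|x|$, so only your first case is needed), check $|x'|\ge(p-q)+q-\gcd(p-q,q)$, and apply the induction hypothesis to $x'$ to get period $g$ there. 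Then propagate: since $|x|\ge p+q-g\ge 2q$, the length-$q$ prefix of $x$ lies inside $x'$ and hence has period $g$ with $g\mid q$, so it is a power of its length-$g$ prefix $z$; as $x$ has period $q$, $x$ is a prefix of $z^{\infty}$ and therefore has period $g$. With that replacement your argument is complete.
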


A word is said to be {\em primitive} if it cannot be written in the
form $u^n$ with $n>1$.  We say that two words $u$ and $v$ are {\em
conjugate} if there exist words $w_1,w_2\in A^*$ such that
$u=w_1w_2$ and $v=w_2w_1$. Note that, if $u$ is a primitive word and
$v$ is a conjugate of $u$, then $v$ is also primitive.
 Let a total order be fixed on the alphabet $A$.
A {\em Lyndon word} is a primitive word which is minimal in its
conjugacy class, for the lexicographic order that extends to $A^+$
the order on $A$. For instance, with a binary alphabet $A=\{a,b\}$
such that $a<b$, the Lyndon words until length four are
$a,b,ab,aab,abb,aaab,aabb,abbb$. Lyndon words are characterized as
follows~\cite{Duval:1983}.
\begin{proposition}\label{prop:Lyndon_less_than_suffix}
A word is a Lyndon word if and only if it is strictly less than each
of its proper suffixes.
\end{proposition}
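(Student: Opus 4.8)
The plan is to prove both implications directly from the definitions, using only elementary comparisons of words in the lexicographic order; the only outside input I will use is the standard fact that a primitive word of length $n$ has exactly $n$ pairwise distinct conjugates, so that a primitive word is \emph{strictly} smaller than each of its other conjugates.

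For the sufficiency, assume $w$ is strictly less than each of its proper suffixes. I first check that $w$ is primitive: if $w=u^n$ with $n>1$, then $u^{n-1}$ is a proper suffix of $w$ and also a proper prefix of $w$, hence lexicographically smaller than $w$, contradicting the hypothesis. Now fix a factorization $w=w_1w_2$ with $w_1,w_2$ nonempty; I claim $w<w_2w_1$. By hypothesis $w<w_2$, and since $|w_2|<|w|$ the word $w$ is not a prefix of $w_2$, so $w$ and $w_2$ first differ at some position $i\le|w_2|$, with the $i$-th letter of $w$ strictly smaller. Because $w_2w_1$ begins with $w_2$, the words $w$ and $w_2w_1$ agree before position $i$ and the $i$-th letter of $w$ is again the smaller one; hence $w<w_2w_1$. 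Since the trivial factorizations give back $w$ itself, $w$ is the strict minimum of its conjugacy class, and being primitive it is a Lyndon word.

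For the necessity, let $w$ be a Lyndon word and $w=w_1w_2$ with $w_1,w_2$ nonempty; I must show $w<w_2$. The conjugate $w_2w_1$ is distinct from $w$ (otherwise $w_1$ and $w_2$ commute and $w$ is not primitive), so minimality gives $w<w_2w_1$. Suppose, for contradiction, that $w\ge w_2$; as $|w|>|w_2|$ this forces $w>w_2$. If $w$ and $w_2$ first differed at a position $i\le|w_2|$ with the $i$-th letter of $w_2$ smaller, then $w_2w_1$ (which starts with $w_2$) would satisfy $w_2w_1<w$, contradicting $w<w_2w_1$. Hence $w_2$ is a proper prefix of $w$, say $w=w_2v$ with $v$ nonempty and $|v|=|w_1|$. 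Cancelling the common prefix $w_2$ in $w=w_2v<w_2w_1$ gives $v<w_1$, a comparison of two words of the same length. But then $vw_2$ is a conjugate of $w=w_2v$, and comparing it with $w=w_1w_2$ on the first $|v|=|w_1|$ letters yields $vw_2<w_1w_2=w$, contradicting the minimality of $w$ in its conjugacy class. Therefore $w<w_2$.

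The routine part is the bookkeeping of first-difference positions and the observation that a word of the form $uw_2$ or $w_2u$ inherits the prefix $w_2$; the one genuinely delicate point is in the necessity proof, namely excluding the situation where the proper suffix $w_2$ happens to be a prefix of $w$. A direct lexicographic comparison does not resolve this case, and the trick is to push the resulting inequality $v<w_1$ back into the conjugacy class through the conjugate $vw_2$ to contradict minimality.
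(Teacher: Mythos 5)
Your proof is correct. Note that the paper itself gives no proof of this proposition; it is quoted as a known characterization with a reference to Duval, so there is nothing to compare against. Both directions of your argument are sound: the sufficiency direction correctly handles primitivity via the prefix/suffix clash for $u^{n-1}$ and then transfers the inequality $w<w_2$ to $w<w_2w_1$ by tracking the first position of difference, and the necessity direction correctly isolates the only delicate case, namely when the proper suffix $w_2$ is also a prefix of $w$. Your resolution of that case --- writing $w=w_2v$, cancelling $w_2$ in $w<w_2w_1$ to get $v<w_1$, and then exhibiting the conjugate $vw_2<w_1w_2=w$ --- is the standard trick and is carried out correctly. Two minor remarks: the fact about a primitive word having $n$ distinct conjugates, which you announce as your only outside input, is never actually used (you prove strict minimality directly and handle distinctness of $w_2w_1$ from $w$ via the commutation argument); and your argument implicitly uses the convention that a proper prefix is lexicographically smaller than the word itself, which is the convention consistent with the paper's examples.
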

In particular, any Lyndon word is \emph{unbordered}, that is, none
of its proper prefixes is one of its suffixes.

\paragraph{\textbf{Pseudowords and $\sigma$-words.}} We denote by $\OAS$ the free profinite semigroup generated by $A$,
whose elements are called \emph{pseudowords} (also known as
\emph{implicit operations}). The free semigroup $A^+$  embeds in
$\OAS$ and is dense in $\OAS$. Given $x\in\OAS$, the closed
subsemigroup of $\OAS$ generated by $x$ contains a single idempotent
denoted by $x^\omega$, which is the limit of the sequence $x^{n!}$.
More generally, for each $q\in\Z$, we denote by $x^{\omega+q}$ the
limit of the sequence $x^{n!+q}$ (with $n!+q>0$).

An \emph{implicit signature}  is a set $\sigma$ of pseudowords
containing the multiplication.  A \emph{$\sigma$-semigroup} is an
algebra in the signature $\sigma$ whose multiplication is
associative. The $\sigma$-subsemigroup of $\OAS$ generated by $A$ is
denoted by $\Omega_A^\sigma \Se$ and its elements are called
\emph{$\sigma$-words}. It is well known that $\Omega_A^\sigma \Se$
is the free $\sigma$-semigroup on $A$. In this paper, we are
interested in the most commonly used implicit signature $\kappa=\{xy
, x^{\omega-1}\}$, usually called the {\em canonical signature}, and
in its extension $\bar\kappa=\{xy,x^{\omega+q}\mid
q\in\mathbb{Z}\}$.  Although $\kappa$ is properly contained in
$\bar\kappa$,  for each $\bar\kappa$-term there exists a well
determined $\kappa$-term with the same interpretation over $\Se$, as
observed above, since this pseudovariety verifies
$x^{\omega+q}=x^{\omega-1}x^{q+1}$ ($q\geq 0$) and
$x^{\omega+q}=(x^{\omega-1})^{-q}$ ($q<0$). This means that
$\Omega_A^\kappa \Se=\Omega_A^{\bar\kappa} \Se$, whence the
signatures $\kappa$ and $\bar\kappa$ have the same {\em expressive
power} over $\Se$, and that the $\kappa$-word and the
$\bar\kappa$-word problems over this class of semigroups are
equivalent.

From hereon, we will work with the signature $\bar\kappa$ and denote by $T_A$ the set of all $\bar\kappa$-terms.
We do not distinguish between $\bk$-terms that only differ in the order in which multiplications are to be
carried out. Sometimes we will omit the reference to the signature $\bar\kappa$ simply referring to an element
of $T_A$ as a term. For convenience, we allow the empty term which is identified with the empty word.

\paragraph{\textbf{Notation for $\bar\kappa$-terms.}}
McCammond~\cite{McCammond:2001} represents $\omega$-terms over~$A$ as nonempty well-parenthesized words over the
alphabet $A\uplus\{\mathord{(},\mathord{)}\}$, which do not have $(\;\!)$ as a factor. For instance, the
$\omega$-term $(a^\omega ba(ab)^\omega)^\omega$ is represented by the parenthesized word $((a) ba(ab))$.
Following this idea, we represent $\bar\kappa$-terms over~$A$ as nonempty well-parenthesized words over the
alphabet $A_\Z=A\uplus\{\;\! \lp{q}\; ,\;\! \rp{q}\;  : q\in\Z\}$, which do not have $\;\lp{q} \ \;\! \rp{q}\;$
as a factor. Every $\bk$-term over~$A$ determines a unique well-parenthesized word over $A_\Z$ obtained by
replacing each subterm $(*)^{\omega+q}$ by $\;\lp{q}*\rp{q}\;$, recursively. Recall that the \emph{rank} of a
term $\alpha$ is the maximum number $\rk \alpha$ of nested parentheses in it. For example, the $\bk$-term
$(a^{\omega-1} ba(ab)^\omega)^{\omega+5}$ has rank $2$ and is represented by $ \lpp{5}\
\lp{\;-\!1}a\rp{-\!1}ba\lp{0}ab\rp{0}\
 \rpp{5}\;\!$, where the rank 2 parentheses are shown in larger size for a greater clarity in
  the representation of the term. Conversely, the $\bk$-term
associated with such a word is obtained by replacing each matching pair of parentheses $\;\lp{q}*\rp{q}\;$ by
$(*)^{\omega+q}$. We identify $T_A$ with the set of these well-parenthesized words over~$A_\Z$. Throughout the
rest of the paper, we will usually refer to a $\bk$-term meaning its associated word over $A_\Z$. Notice that,
while the set $A_\Z$ is infinite, each term uses only a finite number of its symbols.

\paragraph{\textbf{Lyndon terms.}} Since $\bk$-terms are represented
as well-parenthesized words over~$A_\Z$, each definition on words
extends naturally to $\bk$-terms. In particular, a term is said to
be {\em primitive} if it cannot be written in the form $\alpha^n$
with $\alpha\in T_A$ and $n>1$, and two terms $\alpha$ and $\beta$
are {\em conjugate} if there exist terms $\gamma_1,\gamma_2\in T_A$
such that $\alpha=\gamma_1\gamma_2$ and $\beta=\gamma_2\gamma_1$. In
order to describe the canonical form for $\bk$-terms, we need to fix
a representative element in each conjugacy class of a primitive
term. For that, we extend the order on $A$ to $A_\Z$ by letting $\;\lp{p}\ <\
\lp{q}\ <x<\ \rp{q}\ <\ \rp{p}\;$ for all $x\in A$ and $p,q\in\Z$
with $p<q$. A {\em Lyndon term} is a primitive term that is minimal,
with respect to the lexicographic ordering, in its conjugacy class.
For instance, $aab$, $\;\lp{-\!1}aa\rp{-\!1}b\lp{2}aa\rp{2}b\;$ and
$\;\lp{\;-\!2\;}a\rp{-\!2\;}\ \;\!\lpp{0}\ \;\!
\lp{\;-\!1}a\rp{-\!1}ab\rpp{0}\;$ are Lyndon terms.

\paragraph{\textbf{Portions of a $\bar\kappa$-term.}}
Terms of the form $\;\lp{q}\delta\rp{q}\;$ will be called \emph{limit terms}, and $\delta$ and $q$ will be
called, respectively, its \emph{base} and its \emph{exponent}. Consider a rank $i+1$ $\bk$-term
\begin{equation}\label{eq:fact_rank_i+1_term}
\alpha=\gamma_0\lp{q_{\;\!\!1}}\delta_1\rp{q_{\;\!\!1}}\gamma_1\cdots
    \lp{q_{\;\!\!n}}\delta_n\rp{q_{\;\!\!n}}\gamma_n,
\end{equation}with $\rk {\gamma_j}\leq i$ and $\rk{\delta_k} =i$.
The number $n$, of limit terms of rank $i+1$ that are subterms of $\alpha$, will be called the \emph{lt-length}
of $\alpha$. The $\bk$-terms $\gamma_j$ and $\delta_k$ in~\eqref{eq:fact_rank_i+1_term} will be called the
\emph{primary subterms} of $\alpha$ and each $\delta_k$ will in addition be said to be a \emph{base} of
$\alpha$. The factors of $\alpha$ of the form
$\;\lp{q_{\;\!\!k}}\delta_k\rp{q_{\;\!\!k}}\gamma_k\lp{q_{\;\!\!k\;\!\!+\;\!\!1}}\delta_{k+1}\rp{q_{\;\!\!k\;\!\!+\;\!\!1}}\;$
are called \emph{crucial portions} of $\alpha$. The prefix $\gamma_0\lp{q_{\;\!\!1}}\delta_1\rp{q_{\;\!\!1}}\;$
and the suffix $\;\lp{q_{\;\!\!n}}\delta_n\rp{q_{\;\!\!n}}\gamma_n$ of $\alpha$ will be called respectively the
\emph{initial portion} and the \emph{final portion} of $\alpha$.  The product
$\;\lp{q_{\;\!\!n}}\delta_n\rp{q_{\;\!\!n}}\gamma_n\gamma_0\lp{q_{\;\!\!1}}\delta_1\rp{q_{\;\!\!1}}\;$ of the
 final and initial portions will be called the \emph{circular portion} of $\alpha$. Notice that  the circular
portion of $\alpha$ is a crucial portion of $\alpha^2$ and that, if $\alpha$ is not a primitive term, then its
circular portion is a crucial portion of $\alpha$ itself. A term $\gamma_0\delta_1^{j_1}\gamma_1\cdots
    \delta_n^{j_n}\gamma_n$, obtained from $\alpha$ by replacing each subterm $\;\lp{q_{\;\!\!k}}\delta_k\rp{q_{\;\!\!k}}\;$ by
  $\delta_k^{j_k}$ with ${j_k}\geq 1$, is a rank~$i$ term called an \emph{expansion} of
  $\alpha$. When each exponent ${j_k}$ is greater than or equal to a given
  positive integer $p$, the expansion is called a \emph{$p$-expansion} of
  $\alpha$. The notion of expansion is extended to a rank $0$ term $\beta$ by declaring that $\beta$ is its
  own unique expansion.

\section{Canonical forms for $\bar\kappa$-terms}\label{section:normal_forms}
In this section, we give the definition of canonical form $\bar\kappa$-terms and identify the reduction rules
that will be used, in Section~\ref{section:algorithm_normal_form}, to transform each $\bar\kappa$-term $\alpha$
into a canonical form $\alpha'$, with $\rk{\alpha'}\leq \rk{\alpha}$. A consequence of
Theorem~\ref{theo:canonical_form} below is that the $\bk$-term $\alpha'$ is unique and so we call it \emph{the
canonical form of $\alpha$}.

\paragraph{\textbf{Canonical form definition.}}
The canonical form for $\bk$-terms is defined recursively as follows. \emph{Rank~0 canonical   forms} are the
words from~$A^*$. Assuming that rank~$i$ canonical forms have been defined, a \emph{rank~$i+1$ canonical form
  ($\bk$-term)} is a $\bk$-term $\alpha$ of the form

\begin{equation}\label{eq:fact_normal_form}
\alpha=\gamma_0\lp{q_{\;\!\!1}}\delta_1\rp{q_{\;\!\!1}}\gamma_1\cdots
    \lp{q_{\;\!\!n}}\delta_n\rp{q_{\;\!\!n}}\gamma_n,
\end{equation}
where the primary subterms $\gamma_j$ and $\delta_k$ are $\bk$-terms such that the following conditions hold:
\begin{enumerate}[label=$(c\;\!\!f\!.\arabic*)$]
\item\label{item:cf-1} the $2$-expansion $\gamma_0\delta_1^2\gamma_1\cdots
    \delta_n^2\gamma_n$ of $\alpha$ is a rank~$i$ canonical form;

\item\label{item:cf-2} each base $\delta_k$ of $\alpha$ is a Lyndon term of rank~$i$;

\item\label{item:cf-3} no $\delta_k$ is a suffix of  $\gamma_{k-1}$;

\item\label{item:cf-4} no $\delta_k$ is a prefix of some term $\gamma_{k}\delta_{k+1}^\ell$ with $\ell\geq
0$.
\end{enumerate}

For instance, the rank 1 terms $\;\!ab\lp{0}abb\rp{0}ab\lp{-\!2}a\rp{-\!2}\;$ and $\ \lp{-\!1\;\!}b\rp{-\!1\;}\ \;
\lp{4}a\rp{4}b \lp{1}ab\rp{1}\;$ as well as the rank 2 terms $\;\lp{1}a\rp{1\;\!}\ \;\!\lpp{-\!3\; }\
\;\!\lp{\;\!0}b\rp{0}\ \;\lp{1}a\rp{1\;\!}\ \;\!\rpp{-\!3}\ \;\!\lp{0}b\rp{0}\ \;\lp{2}a\rp{2}b\;\!$ and $\;\lpp{2}\
\;\!\lp{\;-\!1}ab\rp{-\!1\;}\ \;\lp{\;-\!1}a\rp{-\!1\;} b \lp{\;\!0}a\rp{0}b\rpp{2}\ \;\!\lpp{0}\
\;\!\lp{\;-\!1}a\rp{-\!1\;} b \lp{\;\!0}a\rp{0}b\rpp{0}$ are in canonical form. We say that a $\bk$-term is in
\emph{semi-canonical form} if it verifies condition~\ref{item:cf-1} of the canonical form definition. Of course,
all canonical forms and all rank 1 terms are in semi-canonical form. The term $\;\lp{0}a\rp{0\;\!}\
\;\!\lpp{-\!1\; }\ \;\!\lp{\;\!0}b\rp{0}\ \;\lp{0}a\rp{0\;\!}\ \;\lp{\;\!0}b\rp{0}\ \;\lp{0}a\rp{0\;\!}\
\;\!\rpp{-\!1}\ \;\!\lp{0}b\rp{0}\ \;\lp{0}a\rp{0}\ \;\lp{\;\!0}b\rp{0}\ \;\lpp{0}\ \;\!\lp{0}a\rp{0\;\!}\
\;\lp{\;\!0}b\rp{0}\ \;\!\rpp{0}$ constitutes an example of  a semi-canonical form of rank $2$ that is not in
canonical form. Notice that the exponents $q_k$ do not intervene in
conditions~\ref{item:cf-1}--\ref{item:cf-4}, which means that $\alpha$ being or not in (semi-)canonical form is
independent of the $q_k$. That is, if a $\bk$-term $\alpha$ of the form~\eqref{eq:fact_normal_form} is in
(semi-)canonical form, then any $\bk$-term obtained from $\alpha$ by replacing the exponent $q_k$
($k=1,\ldots,n$) by some $q'_k$ is also in (semi-)canonical form.

As one may note, the canonical form definition for crucial portions does not coincide with the one that
McCammond~\cite{McCammond:2001} imposed on crucial portions of $\omega$-terms in normal form. While McCammond's
definition is symmetric relative to the limit terms of the crucial portion and in some cases forces the central
factor to have some copies of the bases adjacent to them, we choose to let each limit term absorb all adjacent
occurrences of its base (even when they overlap the limit term on its right side), a strategy already used by
the author in~\cite{Costa:2001} to solve the $\omega$-word problem for the pseudovariety $\LSl$. This way the
 canonical form definition for crucial portions looses symmetry but determines shorter canonical forms. For instance,
the $\bk$-terms $\alpha_1=(a^\omega b^\omega)^\omega$ and $\alpha_2=a^{\omega-1}ab
b^{\omega-2}ba(a^{\omega-2}abb^{\omega-2}ba)^{\omega-2}a^{\omega-2}abb^{\omega-1}$ have the same interpretation
over $\Se$. With the above canonical form definition, $\alpha_1$ is the canonical form of both $\alpha_1$ and
$\alpha_2$, while with McCammond's alternative, $\alpha_2$ would be their common canonical form. Moreover these
canonical forms have the following nice property that fails, in part, for McCammond's normal forms.

\begin{proposition}\label{prop:canonicalform_vs_subterms}
 The following conditions are equivalent for a term $\alpha$:
\begin{enumerate}
\item\label{item:canonicalform_vs_subterms-a} The term $\alpha$ is in (semi-)canonical form.

\item\label{item:canonicalform_vs_subterms-b} Every subterm of $\alpha$ is in (semi-)canonical form.

\item\label{item:canonicalform_vs_subterms-c} The initial portion,
the final portion  and all of the crucial portions of $\alpha$ are in (semi-)\linebreak canonical form.
\end{enumerate}
\end{proposition}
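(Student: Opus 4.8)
The plan is to prove the cycle of implications $(a)\Rightarrow(b)\Rightarrow(c)\Rightarrow(a)$, proceeding by induction on the rank of $\alpha$. The base case (rank $0$) is trivial: a word from $A^*$ has all its subterms and portions in $A^*$ as well, and all three conditions hold vacuously. So assume the equivalence holds for all terms of rank at most $i$, and let $\alpha$ be a rank $i+1$ term written as in~\eqref{eq:fact_normal_form}.

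For $(a)\Rightarrow(b)$, suppose $\alpha$ is in (semi-)canonical form. A subterm of $\alpha$ is either a subterm of some primary subterm $\gamma_j$ or $\delta_k$, or a subterm obtained by taking a factor that spans several consecutive primary subterms and limit terms; in all cases it has the shape $\gamma'\lp{q_{\;\!\!r}}\delta_r\rp{q_{\;\!\!r}}\gamma_r\cdots\lp{q_{\;\!\!s}}\delta_s\rp{q_{\;\!\!s}}\gamma''$ where $\gamma'$ is a suffix of $\gamma_{r-1}$ and $\gamma''$ a prefix of $\gamma_s$ (with the degenerate cases of a subterm of a single $\gamma_j$, or a subterm wholly inside some $\delta_k$, handled separately using the inductive hypothesis, since $\delta_k$ and $\gamma_j$ are themselves rank $\le i$ canonical forms once we know~\ref{item:cf-1} transfers to them). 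The point is that conditions~\ref{item:cf-2}--\ref{item:cf-4} are local to crucial portions, initial and final portions, hence are inherited by any such factor: the bases appearing in the subterm are a subset of the $\delta_k$'s (still Lyndon of rank $i$), and the non-suffix/non-prefix conditions for the subterm are weaker than the ones already known for $\alpha$ because we are looking at a suffix $\gamma'$ of $\gamma_{r-1}$ rather than all of $\gamma_{r-1}$, and a prefix $\gamma''$ of $\gamma_s$ rather than $\gamma_s\delta_{s+1}^\ell$. For condition~\ref{item:cf-1}, one observes that the $2$-expansion of the subterm is a factor of the $2$-expansion of $\alpha$ (which is a rank $i$ canonical form by hypothesis), and apply the inductive hypothesis in the direction $(a)\Rightarrow(b)$ at rank $i$ to conclude that this factor is again a rank $i$ canonical form — this is exactly where the induction is needed.

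The implication $(b)\Rightarrow(c)$ is immediate, since the initial portion, the final portion, and each crucial portion of $\alpha$ are in particular subterms of $\alpha$. For $(c)\Rightarrow(a)$, assume the initial, final, and all crucial portions are in (semi-)canonical form. Conditions~\ref{item:cf-2}, \ref{item:cf-3}, \ref{item:cf-4} are each a statement about a single crucial portion (or the initial portion, for the leftmost limit term's prefix obligations, and the final portion, for the rightmost), so they follow immediately; the only genuine content is establishing condition~\ref{item:cf-1}, i.e. that the full $2$-expansion $\gamma_0\delta_1^2\gamma_1\cdots\delta_n^2\gamma_n$ is a rank $i$ canonical form. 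Here I would use the inductive hypothesis at rank $i$ in the direction $(c)\Rightarrow(a)$: the crucial portions of the rank $i$ term $\gamma_0\delta_1^2\gamma_1\cdots\delta_n^2\gamma_n$ are either entirely contained in the $2$-expansion of one of the crucial portions $\lp{q_k}\delta_k\rp{q_k}\gamma_k\lp{q_{k+1}}\delta_{k+1}\rp{q_{k+1}}$ of $\alpha$ (namely $\delta_k^2\gamma_k\delta_{k+1}^2$, whose rank $i-1$ canonical-form status is part of the data that the crucial portion of $\alpha$ is in canonical form, via the inductive hypothesis applied one level down), or contained in the $2$-expansion of the initial or final portion of $\alpha$, or are crucial portions internal to a single $\delta_k^2$ or $\gamma_j$; in each case they are rank $i-1$ canonical forms, so by the inductive hypothesis the whole $2$-expansion is a rank $i$ canonical form.

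The main obstacle I expect is bookkeeping in $(a)\Rightarrow(b)$: making precise the claim that "every subterm of $\alpha$ looks like a factor spanning consecutive primary subterms," handling the boundary cases where a subterm lies inside a single base $\delta_k$ (then one descends into $\delta_k$, which is a rank $i$ canonical form by~\ref{item:cf-2} together with~\ref{item:cf-1}, and invokes the inductive hypothesis) versus straddling a limit-term boundary, and verifying carefully that the prefix/suffix conditions~\ref{item:cf-3}--\ref{item:cf-4} for the sub-factor are logically weaker than those for $\alpha$ — in particular that truncating $\gamma_s$ to a prefix $\gamma''$ cannot create a new violation of~\ref{item:cf-4}, and that the condition~\ref{item:cf-4} clause "$\gamma_k\delta_{k+1}^\ell$ with $\ell\ge 0$" is exactly what makes the argument robust under passing to a sub-factor that drops the trailing limit terms. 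One should also be slightly careful that the parenthetical "(semi-)" is consistent throughout: in the semi-canonical case conditions~\ref{item:cf-2}--\ref{item:cf-4} are dropped everywhere and only~\ref{item:cf-1} is at issue, so the three implications reduce to the corresponding statements about rank $i$ canonical forms and factors of $2$-expansions, which is the same induction with fewer conditions to track.
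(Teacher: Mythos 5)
Your proposal is correct and follows essentially the same route as the paper: induction on rank, with $(a)\Rightarrow(b)$ handled by observing that the $2$-expansion of a subterm is a subterm of the $2$-expansion of $\alpha$ (to which the inductive hypothesis applies) while conditions~\ref{item:cf-2}--\ref{item:cf-4} are inherited locally, and $(c)\Rightarrow(a)$ by the inductive hypothesis applied one rank down. The paper states the last implication in one line where you spell it out, but the argument is the same.
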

\begin{proof} The proof is made by induction on
the rank of $\alpha$. For $\rk {\alpha}=0$, the result holds trivially. Let now $\rk {\alpha}=i+1$ and suppose,
by the induction hypothesis, that the proposition holds for $\bk$-terms of rank at most $i$.

To show the implication
\ref{item:canonicalform_vs_subterms-a}$\Rightarrow$\ref{item:canonicalform_vs_subterms-b},
assume that $\alpha$ is in semi-canonical form and that it has the
form~\eqref{eq:fact_normal_form}. Consider a subterm $\beta$ of
$\alpha$ and let us prove that $\beta$ is in semi-canonical form.
Suppose first that $\rk {\beta}= i+1$. In this case $\beta$ is of
the form
$\beta=\gamma'_{j-1}\lp{q_{\;\!\!j}}\delta_j\rp{q_{\;\!\!j}}\gamma_j\cdots
    \lp{q_{\;\!\!k}}\delta_k\rp{q_{\;\!\!k}}\gamma'_k$
    where $1\leq j\leq k\leq n$, $\gamma'_{j-1}$ is a suffix of
    $\gamma_{j-1}$ and $\gamma'_k$ is a prefix of $\gamma_k$. The
    $2$-expansion $\beta_1=\gamma'_{j-1}\delta_j^2\gamma_j\cdots
    \delta_k^2\gamma'_k$  of $\beta$ is a subterm of the $2$-expansion $\alpha_1=\gamma_0\delta_1^2\gamma_1\cdots
    \delta_n^2\gamma_n$ of $\alpha$. As $\alpha$ is in semi-canonical     form, $\alpha_1$ is in canonical form. Now, since $\alpha_1$ is rank $i$ and $\beta_1$ is a subterm of
    $\alpha_1$, we infer from the induction hypothesis that $\beta_1$
    is in canonical form. Hence, $\beta$ is in semi-canonical form.  Note that assuming further that $\alpha$ is in canonical     form, i.e., that $\alpha$ verifies
    conditions~\ref{item:cf-2}--\ref{item:cf-4}, it follows that also $\beta$ verifies those
    conditions whence it is in canonical form. Suppose now that
$\rk {\beta}\leq i$. Then $\beta$ is a subterm of some primary subterm of $\alpha$, whence it is a subterm of
the rank $i$ canonical form $\alpha_1$. By the induction hypothesis, it follows that $\beta$
 is also in semi-canonical form in this case (and it is in canonical form when $\alpha$ is in canonical form).

The implication
\ref{item:canonicalform_vs_subterms-b}$\Rightarrow$\ref{item:canonicalform_vs_subterms-c}
 is obvious, while \ref{item:canonicalform_vs_subterms-c}$\Rightarrow$\ref{item:canonicalform_vs_subterms-a}
  follows easily from the hypothesis~\ref{item:canonicalform_vs_subterms-c} and from the induction hypothesis.
\end{proof}

We say that a $\bk$-term $\alpha$ is in \emph{circular canonical
form} if $\alpha^2$ is in canonical form. The following observation
is an immediate, trivially verifiable, consequence of
Proposition~\ref{prop:canonicalform_vs_subterms}.
\begin{corollary}\label{corol:circular_vs_canonical}
Let $\alpha$ be a $\bk$-term.
\begin{enumerate}
\item\label{item:circular_vs_canonical-a} The term $\alpha$ is in circular canonical form if and only if both $\alpha$  and
its circular portion are in canonical form.

\item\label{item:circular_vs_canonical-b} If $\alpha$ is in circular canonical form then any conjugate of $\alpha$ is also in circular canonical form.

\item\label{item:circular_vs_canonical-c} If $\alpha$ is in semi-canonical form then every base  of $\alpha$ is in circular
 canonical form and the other primary subterms of $\alpha$ are in canonical  form; more generally, for any  subterm $\beta$ of $\alpha$, every base of $\beta$ is in circular canonical form and
 the other primary subterms of $\beta$ are in canonical  form.

\item\label{item:circular_vs_canonical-d} If $\alpha$ is in canonical form and it is not a primitive term, then $\alpha$ is in circular canonical  form.
\end{enumerate}
\end{corollary}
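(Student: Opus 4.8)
The plan is to deduce all four items from Proposition~\ref{prop:canonicalform_vs_subterms}, by first recording how the portions of a power $\alpha^m$ relate to those of $\alpha$. We may assume $\rk{\alpha}\geq 1$, the rank-$0$ case being trivial: then $\alpha$ and $\alpha^m$ lie in $A^*$, so they are in canonical form, and there are no bases or crucial portions to consider. Writing $\alpha$ as in~\eqref{eq:fact_normal_form}, I would first record the following elementary observation. For every $m\geq 1$ the word $\alpha^m$ is again a $\bar\kappa$-term (it is well-parenthesized, and it has no $\lp{q}\rp{q}$ factor since $\alpha$ itself has none and, being well-parenthesized, does not end with an opening parenthesis), its initial and final portions coincide with those of $\alpha$, and for $m\geq 2$ its set of crucial portions is exactly the set of crucial portions of $\alpha$ together with the circular portion of $\alpha$ (which occurs at each of the $m-1$ junctions, the bases $\delta_k$ remaining untouched). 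Combining this with Proposition~\ref{prop:canonicalform_vs_subterms} gives, for every $m\geq 2$: $\alpha^m$ is in canonical form $\iff$ the initial, final and crucial portions of $\alpha^m$ are in canonical form $\iff$ $\alpha$ and its circular portion are in canonical form. Specializing to $m=2$ and recalling that $\alpha$ is in circular canonical form precisely when $\alpha^2$ is in canonical form yields item~\ref{item:circular_vs_canonical-a}.

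For item~\ref{item:circular_vs_canonical-b}, I would write a conjugate of $\alpha$ as $\beta=\gamma_2\gamma_1$ with $\alpha=\gamma_1\gamma_2$. If $\alpha$ is in circular canonical form, then by item~\ref{item:circular_vs_canonical-a} both $\alpha$ and its circular portion are in canonical form, so the equivalence above with $m=3$ shows that $\alpha^3$ is in canonical form. Since $\beta^2=\gamma_2\gamma_1\gamma_2\gamma_1$ occurs as a subterm of $\alpha^3=\gamma_1\gamma_2\gamma_1\gamma_2\gamma_1\gamma_2$, Proposition~\ref{prop:canonicalform_vs_subterms} forces $\beta^2$ to be in canonical form; that is, $\beta$ is in circular canonical form.

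For item~\ref{item:circular_vs_canonical-c}, suppose first that $\alpha$ is in semi-canonical form, so its $2$-expansion $\gamma_0\delta_1^2\gamma_1\cdots\delta_n^2\gamma_n$ is a rank-$i$ canonical form. Each primary subterm $\gamma_j$, as well as each term $\delta_k^2$, occurs as a subterm of this $2$-expansion, so by Proposition~\ref{prop:canonicalform_vs_subterms} every $\gamma_j$ is in canonical form and every $\delta_k^2$ is in canonical form — the latter being exactly the statement that each base $\delta_k$ is in circular canonical form. For the more general clause, Proposition~\ref{prop:canonicalform_vs_subterms} guarantees that every subterm $\beta$ of $\alpha$ is itself in semi-canonical form, so applying the statement just proved to $\beta$ completes the argument.

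Finally, for item~\ref{item:circular_vs_canonical-d}, assume $\alpha$ is in canonical form and is not primitive. Then, as observed in Section~\ref{section:preliminaries}, the circular portion of $\alpha$ is one of the crucial portions of $\alpha$, hence is in canonical form by Proposition~\ref{prop:canonicalform_vs_subterms}; since $\alpha$ itself is in canonical form, item~\ref{item:circular_vs_canonical-a} gives that $\alpha$ is in circular canonical form. As the statement of the corollary already suggests, there is no substantial obstacle here: the entire proof is a translation through Proposition~\ref{prop:canonicalform_vs_subterms}, and the only point needing a little care is the bookkeeping in the first paragraph — verifying that passing from $\alpha$ to $\alpha^m$ leaves the bases $\delta_k$ unchanged and introduces exactly one extra crucial portion (the circular portion of $\alpha$) at each junction, with no effect on the initial and final portions.
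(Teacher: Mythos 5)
Your proposal is correct and follows exactly the route the paper intends: the paper gives no proof, asserting the corollary is an ``immediate, trivially verifiable, consequence'' of Proposition~\ref{prop:canonicalform_vs_subterms}, and your argument is precisely the natural fleshing-out of that claim (relating the initial, final and crucial portions of $\alpha^m$ to those of $\alpha$ and then invoking the equivalence of conditions \ref{item:canonicalform_vs_subterms-a}--\ref{item:canonicalform_vs_subterms-c}). The only detail worth keeping, as you note, is the bookkeeping that each junction in $\alpha^m$ contributes the circular portion of $\alpha$ as a crucial portion, which is exactly what makes items \ref{item:circular_vs_canonical-a}, \ref{item:circular_vs_canonical-b} and \ref{item:circular_vs_canonical-d} fall out.
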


\paragraph{\textbf{Rewriting rules for $\bk$-terms.}} The procedure to transform an arbitrary $\bk$-term into its canonical form, while
retaining its value on finite semigroups, consists in applying elementary changes resulting from reading in
either direction the $\bk$-identities of the following set $\Sigma$ (where $n,p,q\in\Z$ with $n>0$):
\begin{numcases}{}
  (\alpha ^{\omega+p})^{\omega+q}=\alpha ^{\omega+pq},\nonumber\\
 (\alpha ^{n})^{\omega+q}=\alpha ^{\omega+nq},\nonumber \\
 \alpha ^{\omega+p} \alpha ^{\omega+q}=\alpha ^{\omega+p+q},\nonumber \\
 \alpha \alpha ^{\omega+q}  =\alpha ^{\omega+q+1}=  \alpha ^{\omega+q}\alpha,\nonumber \\
(\alpha \beta)^{\omega+q}\alpha =\alpha (\beta\alpha
)^{\omega+q}.\nonumber
\end{numcases}
The types of changes are therefore given by the following rewriting
rules for terms
\begin{xalignat*}{2}
&1.~\lpp{q\;\!}\ \lp{\;\!p}\alpha\rp{p\;\!}\ \rpp{q}\ \ \rightleftarrows\ \ \lp{pq}\alpha\rp{pq}&
&4_L.~\,\alpha \lp{q}\alpha\rp{q}\ \ \rightleftarrows\ \ \; \lp{q\;\!\!+\;\!\!1}\alpha\rp{\;q\;\!\!+\;\!\!1}\\[-1mm]
&2.~ \lp{q}\alpha^n\rp{q}\ \ \rightleftarrows\ \
\lp{nq}\alpha\rp{nq}&
&4_R.~\lp{q}\alpha\rp{q}\alpha\  \rightleftarrows\ \ \; \lp{q\;\!\!+\;\!\!1}\alpha\rp{\;q\;\!\!+\;\!\!1} \\[-1mm]
&3.~ \lp{p}\alpha\rp{p}\ \;\lp{q}\alpha\rp{q}\ \ \rightleftarrows\ \
\;\! \lp{p\;\!\!+\;\!\!q}\alpha\rp{\;p\;\!\!+\;\!\!q}\ \ & &5.~\
\;\,\lp{q}\alpha\beta\rp{q}\alpha\ \rightleftarrows\
\alpha\lp{q}\beta\alpha\rp{q}
\end{xalignat*}
We call the application of a rule of type 1--4 from left to right
(resp.\ from right to left) a \emph{contraction} (resp.\ an
\emph{expansion}) of that type. An application of a rule of type 5,
in either direction, will be called a \emph{shift}. We say that
terms $\alpha$ and $\beta$ are \emph{equivalent}, and denote
$\alpha\sim
 \beta$, if there is a
derivation from $\alpha$ to $\beta$ (that is, there is a finite
sequence of contractions, expansions and shifts  that starts in
$\alpha$ and ends in  $\beta$).

\begin{example}\label{ex:derivation}
Consider the rank 2 canonical form $\delta=b^5a\lpp{3\;\!}\  \lp{0}b\rp{0}a\rpp{3\;}\ \; \lp{-\!5}b\rp{-\!5}\;$.
The rank 3 term $\alpha=\;\lpp{-\!2\;\!}\delta \rpp{-\!2}$ can be rewritten as follows
$$\begin{array}{rl}
\alpha\rightarrow&\hspace*{-2mm} \;\lpp{-\!2\;} b^5a\lpp{3\;}\ \;\!\lp{\;\!\!-\!5}b\rp{-\!5\;} b^5a\rpp{3\;\!}\
\;\! \lp{\;\!-\!5}b\rp{-\!5\;}\ \;\!\rpp{\;-\!2}\ \rightarrow \ \lpp{-\!2\;}\ \lpp{3}
b^5a\lp{\;\!\!-\!5}b\rp{-\!5\;}\ \;\!\rpp{3} b^5a \lp{\;\!\!-\!5}b\rp{-\!5\;}\ \;\!\rpp{\;-\!2}\ \rightarrow \
\lpp{-\!2\;}\ \lpp{4} b^5a\lp{\;\!\!-\!5}b\rp{-\!5\;}\ \;\!\rpp{4}\ \rpp{\;-\!2}\ \rightarrow \ \lpp{-\!8\;}
b^5a\lp{\;\!\!-\!5}b\rp{-\!5\;}\ \;\!
\rpp{\;-\!8}\;\\[1mm]
\rightarrow&\hspace*{-2mm}\;\lpp{-\!9\;}
b^5a\lp{\;\!\!-\!5}b\rp{-\!5\;}\
\;\!\rpp{\;-\!9}b^5a\lp{\;\!\!-\!5}b\rp{-\!5}\ \;\rightarrow \
b^5a\lpp{-\!9\ }\ \;\lp{\;\!-\!5}b\rp{-\!5}b^5a \rpp{\;-\!9\ }\
\lp{\;\!-\!5}b\rp{-\!5}\ \ \rightarrow \ b^5a\lpp{-\!9\ }\
\lp{0}b\rp{0}a \rpp{\;-\!9\ }\ \;\lp{\;\!-\!5}b\rp{-\!5}\ \;
=\alpha'.
\end{array}$$
The first step in this derivation is an expansion of type $4_R$, the second is a shift, the third step is a
contraction of type $4_R$, the fourth is a contraction of type 1, the fifth step is an expansion of type $4_R$,
the sixth  is a shift, and the final step is a contraction of type $4_R$.
\end{example}

Notice that in the above example $\delta$ is a term of the form
$\varepsilon_1\!\lp{3}\beta\rp{3}\varepsilon_2$ such that $\varepsilon_2\varepsilon_1\sim \beta$. Moreover
$\;\lpp{-\!2\;}\varepsilon_1\!\lp{3}\beta\rp{3}\varepsilon_2
\rpp{-\!2}\;=\alpha\sim\alpha'=\varepsilon_1\lp{-\!9}\beta\rp{-\!9}\varepsilon_2$ and $-9=(3+1)(-2)-1$. The
example illustrates the following observation.
\begin{fact}\label{fact:limit_term}
If $\delta$ is a term of the form $\delta=\varepsilon_1\lp{p}\beta\rp{p}\varepsilon_2$ with
$\varepsilon_2\varepsilon_1\sim \beta$, then
$\lpp{q}\delta\rpp{q}\;\sim\varepsilon_1\lp{r}\beta\rp{r}\varepsilon_2$ where $r=(p+1)q-1$.
\end{fact}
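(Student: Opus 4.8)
The plan is to reduce the general case to a short, explicit derivation inside the rank-one term, mimicking Example~\ref{ex:derivation}. Write $\delta=\varepsilon_1\lp{p}\beta\rp{p}\varepsilon_2$ and set $\alpha=\lpp{q}\delta\rpp{q}$. First I would use the hypothesis $\varepsilon_2\varepsilon_1\sim\beta$ to rewrite the inner limit term $\lp{p}\beta\rp{p}$ as $\lp{p}\varepsilon_2\varepsilon_1\rp{p}$; by the definition of $\sim$ this is a finite sequence of contractions, expansions and shifts applied to the subterm $\beta$, and these carry over verbatim to $\delta$ and hence to $\alpha$. So it suffices to treat the case $\beta=\varepsilon_2\varepsilon_1$, i.e.\ to prove
\[
\lpp{q}\varepsilon_1\lp{p}\varepsilon_2\varepsilon_1\rp{p}\varepsilon_2\rpp{q}\ \sim\ \varepsilon_1\lp{r}\varepsilon_2\varepsilon_1\rp{r}\varepsilon_2,\qquad r=(p+1)q-1.
\]

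The core computation is then the following chain, which is exactly the pattern of the example (now with the roles $\varepsilon_1,\varepsilon_2$ in place of the concrete words $b^5a$ and $\lp{-5}b\rp{-5}$, and $p,q$ in place of $3,-2$). Apply a shift of type~5 inside $\delta$ to move the leading $\varepsilon_1$ across the limit term: $\lp{p}\varepsilon_2\varepsilon_1\rp{p}\varepsilon_2\rightleftarrows$ after pulling $\varepsilon_1$ out front. Concretely, I would first expand: inside $\lpp{q}\cdot\rpp{q}$ write $\varepsilon_1\lp{p}\varepsilon_2\varepsilon_1\rp{p}\varepsilon_2$, perform a shift $\lp{p}(\varepsilon_2\varepsilon_1)\rp{p}\varepsilon_2\ \rightleftarrows$? — more carefully, shift rule~5 reads $\lp{p}\alpha\beta\rp{p}\alpha\rightleftarrows\alpha\lp{p}\beta\alpha\rp{p}$, so with $\alpha:=\varepsilon_1$, $\beta:=\varepsilon_2$ it turns the suffix $\lp{p}\varepsilon_1\varepsilon_2\rp{p}\varepsilon_1$ into $\varepsilon_1\lp{p}\varepsilon_2\varepsilon_1\rp{p}$; hence conversely $\delta=\varepsilon_1\lp{p}\varepsilon_2\varepsilon_1\rp{p}\varepsilon_2\sim\lp{p}\varepsilon_1\varepsilon_2\rp{p}\varepsilon_1\varepsilon_2$, call this $\zeta^{\,p\text{-type}}$; then $\delta\sim\lp{p}\varepsilon_1\varepsilon_2\rp{p}(\varepsilon_1\varepsilon_2)$, and by rule $4_R$ this contracts to $\lp{p+1}\varepsilon_1\varepsilon_2\rp{p+1}$. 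Therefore $\alpha=\lpp{q}\delta\rpp{q}\sim\lpp{q}\lp{p+1}\varepsilon_1\varepsilon_2\rp{p+1}\rpp{q}$, which by rule~1 equals $\lp{(p+1)q}\varepsilon_1\varepsilon_2\rp{(p+1)q}$. Finally, using rule $4_R$ in reverse (an expansion) once to peel off a trailing copy, $\lp{(p+1)q}\varepsilon_1\varepsilon_2\rp{(p+1)q}\sim\lp{(p+1)q-1}\varepsilon_1\varepsilon_2\rp{(p+1)q-1}\varepsilon_1\varepsilon_2=\lp{r}\varepsilon_1\varepsilon_2\rp{r}\varepsilon_1\varepsilon_2$, and one last shift of type~5 moves the leading $\varepsilon_1$ back inside: $\lp{r}\varepsilon_1\varepsilon_2\rp{r}\varepsilon_1\sim\varepsilon_1\lp{r}\varepsilon_2\varepsilon_1\rp{r}$, giving $\varepsilon_1\lp{r}\varepsilon_2\varepsilon_1\rp{r}\varepsilon_2$, as required. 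Chaining the first reduction (replacing $\beta$ by $\varepsilon_2\varepsilon_1$) with this derivation yields $\lpp{q}\delta\rpp{q}\sim\varepsilon_1\lp{r}\beta\rp{r}\varepsilon_2$.

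I expect the only delicate point to be bookkeeping with the shift rule~5 and the $4_R$-rule when $\varepsilon_1$ or $\varepsilon_2$ is empty, or when $p+1=0$ (so that $\lp{0}\cdot\rp{0}$ is an $\omega$-power) — in the latter case rule~1 still applies and $(p+1)q=0$, so $r=-1$ and the statement reads $\lpp{q}\delta\rpp{q}\sim\varepsilon_1\lp{-1}\beta\rp{-1}\varepsilon_2$, consistent with $\delta\sim(\varepsilon_1\varepsilon_2)^{\omega}$. All of these are handled by noting that the rewriting rules are stated for arbitrary terms $\alpha,\beta\in T_A$, including the empty term, and that contraction/expansion of type $4_R$ is valid for every exponent in $\Z$. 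Since each step above is a single contraction, expansion or shift applied either to $\alpha$ itself or to one of its subterms, the whole chain is a derivation, so $\alpha\sim\varepsilon_1\lp{r}\beta\rp{r}\varepsilon_2$, which is the assertion of Fact~\ref{fact:limit_term}.
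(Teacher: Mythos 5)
Your derivation is correct and follows essentially the same chain as the paper's proof: substitute $\varepsilon_2\varepsilon_1$ for $\beta$, shift $\varepsilon_1$ out, contract by $4_R$ to $\lp{p+1}\varepsilon_1\varepsilon_2\rp{p+1}$, collapse the nested powers by rule~1, then expand by $4_R$ and shift back. The extra remarks about empty $\varepsilon_i$ and the case $p+1=0$ are harmless and consistent with the paper's (terser) argument.
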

\begin{proof} The sequence of equivalences
$$\lpp{q}\varepsilon_1\lp{p}\beta\rp{p}\varepsilon_2\rpp{q}\;\sim\;
\lpp{q}\varepsilon_1\lp{p}\varepsilon_2\varepsilon_1\rp{p}\varepsilon_2\rpp{q}\;\sim\; \lpp{q}\
\lp{p}\varepsilon_1\varepsilon_2\rp{p}\varepsilon_1\varepsilon_2\rpp{q}\;\sim\; \lpp{q\;}\ \;\!\lp{\
p\;\!\!+\;\!\!1}\varepsilon_1\varepsilon_2\rp{p\;\!\!+\;\!\!1\ }\ \;\!\rpp{\;q}\;\sim\  \lp{\
r\;\!\!+\;\!\!1}\varepsilon_1\varepsilon_2\rp{r\;\!\!+\;\!\!1\
}\ \sim\varepsilon_1\lp{r}\varepsilon_2\varepsilon_1\rp{r}\varepsilon_2\sim\varepsilon_1\lp{r}\beta\rp{r}\varepsilon_2$$
can be easily deduced from the hypothesis $\varepsilon_2\varepsilon_1\sim \beta$ and the reduction rules.
\end{proof}

 Since all $\bk$-identities of $\Sigma$ are easily shown to
be valid in $\Se$, if $\alpha\sim \beta$ then $\Se\models \alpha=\beta$. We will prove below that the converse
implication also holds. We do this by transforming each $\bk$-term into an equivalent canonical form and by
showing that, if two given canonical forms are equal over $\Se$ then they are precisely the same $\bk$-term.
This solves the $\bk$-word problem for $\Se$.

\section{The canonical form algorithm}\label{section:algorithm_normal_form}
We describe an algorithm that computes the canonical form of any given $\bk$-term. The algorithm will be defined
recursively on the rank of the given term. Recall first that all rank 0 terms are already in canonical form and
so they coincide with their canonical form. Assuming that the method to determine the canonical form of any term
of rank at most $i$ was already defined, we show below how to reduce an arbitrary term of rank $i+1$ to its
canonical form. The rank $i+1$ canonical form reduction algorithm consists of two major steps. The first step
reduces the given term to a semi-canonical form and the second step completes the calculation of the canonical
form. It will be convenient to start with the description of the second step since this will be used, in rank
$i$, to define the first step in rank $i+1$. Notice that the first step in rank 1 is trivial since every rank 1
term is already in semi-canonical form.

\paragraph{\textbf{Step 2.}}
The procedure to compute the canonical form of an arbitrary  rank $i+1$ term $\alpha_1$ in semi-canonical form
is the following.
\begin{enumerate}[label=2.\arabic*)]
\item\label{ranki+1-step1} Apply all possible rank $i+1$ contractions of type~2.

\item\label{ranki+1-step2} By means of a rank $i+1$ expansion of type $4$, if necessary, and a rank $i+1$ shift,
  write each rank $i+1$ limit term in the form $\;\lp{q}\delta\rp{q}\;$ where $\delta$ is a Lyndon term.

\item\label{ranki+1-step3} Apply all possible rank $i+1$ contractions of type~4.

\item\label{ranki+1-step4} Apply all possible rank $i+1$ contractions of type~3.

\item\label{ranki+1-step5} Put each rank $i+1$ crucial portion $\;\lp{q_{\;\!\!1}}\delta_1\rp{q_{\;\!\!1}}\gamma\lp{\;q_{\;\!\!2}}\delta_2\rp{q_{\;\!\!2}}\;$ in canonical form
as follows. By Step 2.3, $\delta_1$
  is not a prefix and $\delta_2$ is not a suffix of $\gamma$. Let $\ell$ be the minimum nonnegative integer such that
  $|\gamma\delta_2^\ell|\geq |\delta_1|$. If $\delta_1$ is not a prefix of $\gamma\delta_2^\ell$ then the crucial portion
  $\;\lp{q_{\;\!\!1}}\delta_1\rp{q_{\;\!\!1}}\gamma\lp{\;q_{\;\!\!2}}\delta_2\rp{q_{\;\!\!2}}\;$ is already in canonical form.
  Otherwise $\ell\neq 0$. In this case, apply $\ell$ rank $i+1$ expansions of type
  $4_L$ to the limit term on the right side of the crucial portion, followed by all possible, say $n$, rank $i+1$ contractions of type $4_R$, thus obtaining a
  term  $\ \lp{q_{\;\!\!1}\!+\;\!\!n }\delta_1\rp{q_{\;\!\!1}\!+\;\!\!n\;}\;\varepsilon\;\lp{\;\!q_{\;\!\!2}\;\!\!-\;\!\!\ell}
   \delta_2\rp{\;\!q_{\;\!\!2}\;\!\!-\;\!\!\ell}\; $ where $\varepsilon$ is a proper suffix of $\delta_2$ having not a prefix
   $\delta_1$. In view of the following claim the step is complete.
\end{enumerate}

\begin{claim}
 The crucial portion $\ \lp{q_{\;\!\!1}\!+\;\!\!n\;}\delta_1\rp{\;q_{\;\!\!1}\!+\;\!\!n\;}\;\varepsilon\;\lp{\;\!q_{\;\!\!2}\;\!\!-\;\!\!\ell}
   \delta_2\rp{\;\!q_{\;\!\!2}\;\!\!-\;\!\!\ell}\; $ is in canonical form.
\end{claim}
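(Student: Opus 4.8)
The claim asserts that the crucial portion obtained at the end of Step 2.5 satisfies the defining conditions of a canonical form. By Proposition~\ref{prop:canonicalform_vs_subterms}, it suffices to verify conditions~\ref{item:cf-1}--\ref{item:cf-4} for this two-limit-term crucial portion; and since the exponents play no role in those conditions, I may freely ignore the changes in the exponents ($q_1 \to q_1+n$, $q_2 \to q_2-\ell$) and work only with the underlying bases and central factor. Write the new crucial portion as $\lp{q_1+n}\delta_1\rp{q_1+n}\varepsilon\lp{q_2-\ell}\delta_2\rp{q_2-\ell}$, where $\varepsilon$ is a proper suffix of $\delta_2$ that does not have $\delta_1$ as a prefix.

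First I would dispose of conditions~\ref{item:cf-1} and~\ref{item:cf-2}. The bases $\delta_1$ and $\delta_2$ are unchanged from before Step 2.5, so they are still Lyndon terms of rank $i$ (established in Step 2.2, and untouched by Steps 2.3--2.5), giving~\ref{item:cf-2}. For~\ref{item:cf-1}, I need the $2$-expansion $\delta_1^2\varepsilon\delta_2^2$ to be a rank~$i$ canonical form; here I would invoke the induction hypothesis on the rank~$i$ algorithm together with the fact that $\delta_1^2\varepsilon\delta_2^2$ is obtained from the earlier $2$-expansion $\delta_1^2\gamma\delta_2^2$ (which was a rank~$i$ canonical form, by semi-canonicity of $\alpha_1$ and the preservation of condition~\ref{item:cf-1} under the rank $i+1$ operations of Steps 2.1--2.4) by the substitution $\gamma \rightsquigarrow \varepsilon$ coming from cancelling whole copies of $\delta_2$ on the left of the right-hand limit term — i.e.\ $\gamma\delta_2^{\ell} = \delta_1^{?}\varepsilon$ is absorbed appropriately. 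The point is that $\varepsilon$ is a suffix of $\delta_2$, which is itself a canonical form (indeed, being a Lyndon term in circular canonical form by Corollary~\ref{corol:circular_vs_canonical}\ref{item:circular_vs_canonical-c}), so the relevant subword $\delta_1^2\varepsilon\delta_2^2$ is again a subterm of a canonical form and hence canonical by Proposition~\ref{prop:canonicalform_vs_subterms}.

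The real content is conditions~\ref{item:cf-3} and~\ref{item:cf-4}. For~\ref{item:cf-3} on the left limit term: $\delta_1$ is not a suffix of the (empty) left context, which is vacuous; for the right limit term, I must show $\delta_2$ is not a suffix of $\delta_1^{t}\varepsilon$ for any relevant $t$ — but $\varepsilon$ is a \emph{proper} suffix of the unbordered Lyndon word $\delta_2$, so $\delta_2$ cannot be a suffix of anything ending in $\varepsilon$ unless a copy of $\delta_1$ overlaps it, and a Fine--Wilf / primitivity argument (Proposition~\ref{prop:fine_wilf}, using that $\delta_1,\delta_2$ are primitive and distinct as Lyndon words, or equal) rules this out. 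For~\ref{item:cf-4} on the left limit term: I must show $\delta_1$ is not a prefix of $\varepsilon\delta_2^{m}$ for any $m\geq 0$ — and this is precisely the exit condition of the $\ell$-expansion/contraction loop, namely that $n$ was chosen maximal so that after removing $n$ copies of $\delta_1$ from the left of the residue $\gamma\delta_2^{\ell}$, what remains, $\varepsilon$, no longer starts with $\delta_1$, and moreover $\varepsilon$ being a short suffix of $\delta_2$ means $\varepsilon\delta_2^m$ has $\delta_1$ as a prefix only if $\delta_1$ is a prefix of $\delta_2^{m+1}$, which (by $|\delta_1|\le|\varepsilon\delta_2|$ type length bookkeeping and primitivity) forces $\delta_1$ to be a prefix of $\varepsilon\delta_2$, contradicting maximality of $n$; for the right limit term,~\ref{item:cf-4} is again vacuous as there is no $\gamma_k\delta_{k+1}^\ell$ to its right within this portion.

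The main obstacle I anticipate is the careful combinatorics-on-words bookkeeping in~\ref{item:cf-3} and~\ref{item:cf-4}: one must track exactly how many copies of $\delta_1$ (the number $n$) get absorbed, show the loop terminates with the stated $\varepsilon$, and then exclude all the borderline overlap cases between $\delta_1$ and $\delta_2$ where a suffix/prefix coincidence could sneak in. This is where Fine and Wilf's theorem (Proposition~\ref{prop:fine_wilf}), the unborderedness of Lyndon words (Proposition~\ref{prop:Lyndon_less_than_suffix}), and the minimality choice of $\ell$ all have to be combined; everything else is a routine appeal to Proposition~\ref{prop:canonicalform_vs_subterms} and the induction hypothesis.
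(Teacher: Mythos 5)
Your reduction of the claim to condition~\ref{item:cf-4} for the left limit term --- i.e.\ to showing that $\delta_1$ is not a prefix of $\varepsilon\delta_2^{k}$ for any $k\geq 1$ --- matches the paper, and your treatment of \ref{item:cf-1}--\ref{item:cf-3} is essentially fine (for \ref{item:cf-3} you only need that $\delta_2$ is not a suffix of $\varepsilon$, which is immediate from $\varepsilon$ being a \emph{proper} suffix of $\delta_2$; no Fine--Wilf argument about $\delta_1^{t}\varepsilon$ is needed there). But the argument you give for the one step that carries all the content is broken. First, from the fact that $\varepsilon$ is a suffix of $\delta_2$ you infer that if $\delta_1$ is a prefix of $\varepsilon\delta_2^{m}$ then $\delta_1$ is a prefix of $\delta_2^{m+1}$; this is a non sequitur, since $\varepsilon\delta_2^{m}$ is a \emph{suffix}, not a prefix, of $\delta_2^{m+1}$, so all you get is an internal occurrence of $\delta_1$ in a power of $\delta_2$. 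Second, and more seriously, you claim that ``$\delta_1$ is a prefix of $\varepsilon\delta_2$'' would contradict the maximality of $n$. It would not: the contractions of type $4_R$ stop as soon as the remaining central factor does not \emph{begin} with $\delta_1$, and when $|\varepsilon|<|\delta_1|$ that stopping condition says nothing about whether $\delta_1$ is a prefix of $\varepsilon\delta_2$. Ruling out precisely that possibility is the entire point of the claim, so your contradiction begs the question.

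The paper fills this gap with a genuine case analysis that your sketch does not reach. Writing $\gamma\delta_2^{\ell}=\delta_1^{n}\varepsilon$, so that $\delta_1=\gamma\delta_2^{\ell-1}\varepsilon_1$ and $\delta_2=\varepsilon_1\varepsilon_2\varepsilon$ with $\varepsilon_2=\delta_1^{n-1}$ and $\varepsilon_1$ nonempty, it first deduces $\delta_1<\delta_2$ from Proposition~\ref{prop:Lyndon_less_than_suffix} (the suffix $\varepsilon_1$ of the Lyndon term $\delta_1$ satisfies $\delta_1\leq\varepsilon_1\leq\delta_2$, and equality is excluded by Step~2.4); it then successively eliminates the cases $\gamma$ empty, $n>1$ and $\varepsilon$ empty using the unborderedness of Lyndon words, and in the remaining case derives $\delta_2<\varepsilon<\delta_1$, contradicting $\delta_1<\delta_2$. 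Fine and Wilf's theorem plays no role; what does the work is the lexicographic characterization of Lyndon words. Your proposal names some of the right tools but does not supply this argument, so as it stands the claim is not proved.
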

\begin{proof} To prove the claim it suffices to show that $\delta_1$ is not a prefix of $\varepsilon\delta_2^k$ for all $k\geq 1$.
 Assume, by way of contradiction, that $\delta_1$ is a prefix of some $\varepsilon\delta_2^k$. We have $\delta_1=\gamma
\delta_2^{\ell-1}\varepsilon_1$ and $\delta_1^{n-1}=\varepsilon_2$ with
$\delta_2=\varepsilon_1\varepsilon_2\varepsilon$ and $\varepsilon_1$ nonempty. Since $\varepsilon_1$ is a suffix
of the Lyndon term $\delta_1$, we have  $\delta_1\leq \varepsilon_1$ by
Proposition~\ref{prop:Lyndon_less_than_suffix}, and since it is a prefix of $\delta_2$, we have
$\varepsilon_1\leq\delta_2$. Therefore $\delta_1\leq\delta_2$. Suppose that  $\delta_1=\delta_2$. In this case,
$\delta_2\leq \varepsilon_1$ and $\varepsilon_1\leq\delta_2$, whence $\varepsilon_1=\delta_2$. From
$\delta_1=\gamma \delta_2^{\ell-1}\varepsilon_1$ it then follows that $\gamma$ is the empty word (and $\ell=1$).
This is not possible since Step
  2.4 eliminated all crucial portions of the form
  $\;\lp{p}\delta_1\rp{p}\ \ \lp{q}\delta_1\rp{q}\;$. Thus
  $\delta_1\neq\delta_2$ and so $\delta_1<\delta_2$.

Suppose that $\gamma$ is the empty word.  Then $\delta_1=\delta_2^{\ell-1}\varepsilon_1$ and so, as $\delta_2$
cannot be a prefix of $\delta_1$ (since in that case we would have $\delta_2\leq \delta_1$, in contradiction
with $\delta_1<\delta_2$),  $\ell=1$ and $\delta_1= \varepsilon_1$ with $\varepsilon_1$ a proper prefix of
$\delta_2$. Hence $|\delta_1|<|\delta_2|$ and so, from the initial assumption,
$\delta_1=\varepsilon\varepsilon_3$ for some nonempty proper prefix $\varepsilon_3$ of $\delta_2$. As $\delta_1$
is a prefix of $\delta_2$, it follows that $\varepsilon_3$ is both a proper prefix and a suffix of $\delta_1$.
That is, $\delta_1$ is a bordered word, which contradicts the fact of $\delta_1$ being a Lyndon word.
Consequently, we may assume that $\gamma$ is not the empty word.

Suppose next that $n>1$. Then $\varepsilon_2$ is nonempty and so
$|\delta_1|<|\delta_2|$. Hence $\ell=1$ and $\delta_1=\gamma
\varepsilon_1$ with $|\varepsilon_1|<|\delta_1|$, whence
$\varepsilon_1$ is a proper suffix of $\delta_1$ and  a proper
prefix of $\delta_2$. In particular, by
Proposition~\ref{prop:Lyndon_less_than_suffix},
$\delta_1<\varepsilon_1$. On the other hand, as
$\varepsilon_2\varepsilon$ is a proper suffix of $\delta_2$,
$\varepsilon_1<\delta_2<\varepsilon_2\varepsilon=\delta_1^{n-1}\varepsilon$
and thus, as $|\varepsilon_1|<|\delta_1|$, $\varepsilon_1<\delta_1$.
We reached a contradiction and so $n=1$ and $\varepsilon_2$ is
empty.

Suppose now that $\varepsilon$ is the empty word. Then $\varepsilon_1=\delta_2$ is a proper suffix of
$\delta_1$. From the initial assumption it then results that $\delta_2$ is also a prefix of $\delta_1$. This
means that $\delta_1$ is a bordered word, a condition that is impossible because $\delta_1$ is a Lyndon word.
Therefore $\varepsilon$ is a nonempty suffix of the Lyndon word $\delta_2$, whence $\delta_2<\varepsilon$. But
$\varepsilon$ is a proper prefix of $\delta_1$ by the initial assumption and so $\varepsilon<\delta_1$. It
follows that $\delta_2<\delta_1$ in contradiction with the above  inequality $\delta_1<\delta_2$. This shows
that $\delta_1$ cannot be a prefix of some $\varepsilon\delta_2^k$ and proves the claim.
\end{proof}

It is easy to verify that this procedure produces a rank $i+1$ term
$\alpha_2$ in canonical form. Indeed, the reduction rules  that are
eventually used in the process are of type 2--5. Hence, the term
$\alpha_2$ is rank $i+1$ since these rules do not change the rank of
the original term $\alpha_1$. Moreover, by
Proposition~\ref{prop:canonicalform_vs_subterms}, $\alpha_2$ is in
semi-canonical form  since $\alpha_1$ also is and the reduction
rules are all applied in rank $i+1$ and  do not change the initial,
final and crucial portions of the 2-expansions of the term. On the
other hand, Steps 2.1 and 2.2 guarantee that $\alpha_2$ verifies
condition~\ref{item:cf-2} of the canonical form definition, while
condition~\ref{item:cf-3} is obtained in Step 2.3. Finally,
$\alpha_2$ satisfies~\ref{item:cf-4} due to the application of Steps
2.3 to 2.5. For instance, applying the above algorithm to the rank
$2$ semi-canonical form $\alpha_1=\;\lp{0}a\rp{0\;\!}\
\;\!\lpp{-\!1\; }\ \;\!\lp{\;\!0}b\rp{0}\ \;\lp{0}a\rp{0\;\!}\
\;\lp{\;\!0}b\rp{0}\ \;\lp{0}a\rp{0\;\!}\ \;\!\rpp{-\!1}\
\;\!\lp{0}b\rp{0}\ \;\lp{0}a\rp{0}\ \;\lp{\;\!0}b\rp{0}\ \;\lpp{0}\
\;\!\lp{0}a\rp{0\;\!}\ \;\lp{\;\!0}b\rp{0}\ \;\!\rpp{0}\;$ one gets
the following derivation
 $$\begin{array}{rl}
\alpha_1\rightarrow&\hspace*{-2mm}\;\lp{0}a\rp{0\;\!}\ \;\!\lpp{-\!2\; }\ \;\!\lp{\;\!0}b\rp{0}\
\;\lp{0}a\rp{0\;\!}\ \;\!\rpp{-\!2}\ \;\!\lp{0}b\rp{0}\ \;\lp{0}a\rp{0}\ \;\lp{\;\!0}b\rp{0}\ \;\lpp{0}\
\;\!\lp{0}a\rp{0\;\!}\ \;\lp{\;\!0}b\rp{0}\ \;\!\rpp{0}\ \rightarrow\ \;\lp{0}a\rp{0\;\!}\ \;\lp{0}b\rp{0\;\!}\
\;\!\lpp{-\!2\; }\ \;\!\lp{\;\!0}a\rp{0}\ \;\lp{0}b\rp{0\;\!}\ \;\!\rpp{-\!2}\ \;\lp{0}a\rp{0}\
\;\lp{\;\!0}b\rp{0}\ \;\lpp{0}\ \;\!\lp{0}a\rp{0\;\!}\ \;\lp{\;\!0}b\rp{0}\ \;\!\rpp{0}\\[1mm]
\rightarrow&\hspace*{-2mm}\;\lpp{-\!1\; }\ \;\!\lp{\;\!0}a\rp{0}\ \;\lp{0}b\rp{0\;\!}\ \;\!\rpp{-\!1}\
\;\lp{0}a\rp{0}\ \;\lp{\;\!0}b\rp{0}\ \;\lpp{0}\ \;\!\lp{0}a\rp{0\;\!}\ \;\lp{\;\!0}b\rp{0}\ \;\!\rpp{0}\
\rightarrow\ \;\lpp{0}\ \;\!\lp{0}a\rp{0\;\!}\ \;\lp{\;\!0}b\rp{0}\ \;\!\rpp{0}\ \lpp{0}\ \;\!\lp{0}a\rp{0\;\!}\
\;\lp{\;\!0}b\rp{0}\ \;\!\rpp{0}\ \rightarrow\ \; \lpp{0}\ \;\!\lp{0}a\rp{0\;\!}\ \;\lp{\;\!0}b\rp{0}\
\;\!\rpp{0}.
\end{array}$$
The canonical form of $\alpha_1$ is thus the rank $2$ term $\alpha_2=\;\lpp{0}\ \;\!\lp{0}a\rp{0\;\!}\
\;\lp{\;\!0}b\rp{0}\ \;\!\rpp{0}$.

\paragraph{\textbf{Some preliminary remarks to the first step.}}
The rank $i+1$ canonical form reduction algorithm will be completed below with the description of Step 1. For
now we present some preparatory results which will be useful for that purpose.

\begin{lemma}\label{lemma:normalization_of_crucial_portion} Let
$i\geq 1$ and let $\alpha$ be a rank $i$ crucial portion of the form
$\;\lp{q_{\;\!\!1}}\delta_1\rp{q_{\;\!\!1}}\gamma\lp{\;q_{\;\!\!2}}\delta_2\rp{q_{\;\!\!2}}\;$, where the bases
$\delta_1$ and $\delta_2$ are rank $i-1$ Lyndon terms in circular canonical form and $\rk{\gamma}\leq i-1$. The
canonical form $\alpha'$ of $\alpha$ is a rank $i$ term that can be computed using Step $1$ of rank at most
$i-1$ and Step $2$ of rank at most $i$ of the canonical form reduction algorithm. Moreover, either
\begin{enumerate}[label=(\Roman*)]
\item\label{item:normalization_of_crucial_portion:1}  $\alpha'$ is a limit term $\;\lp{r_1}\delta_1\rp{r_1}\;$, in which case
$\delta_1=\delta_2$, the canonical form of $\gamma$ is $\delta^p_1$ for some $p\geq 0$ and $r_1=q_1+q_2+p$; or
\item\label{item:normalization_of_crucial_portion:2} $\alpha'$ is a crucial portion of the form
$\;\lp{r_{\;\!\!1}}\delta_1\rp{r_{\;\!\!1}}\varepsilon\lp{\;\!r_{\;\!\!2}}\delta_2\rp{r_{\;\!\!2}}\;$.
\end{enumerate}
We then say that $\alpha$ is of type~\ref{item:normalization_of_crucial_portion:1}
or~\ref{item:normalization_of_crucial_portion:2} depending on the
condition~\ref{item:normalization_of_crucial_portion:1} or~\ref{item:normalization_of_crucial_portion:2} that
the canonical form $\alpha'$ verifies.
\end{lemma}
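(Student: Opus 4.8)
The statement asserts three things about the crucial portion $\alpha=\lp{q_1}\delta_1\rp{q_1}\gamma\lp{q_2}\delta_2\rp{q_2}$: that its canonical form $\alpha'$ can be computed using only lower-rank instances of the reduction algorithm, that $\alpha'$ is a rank $i$ term, and that it has one of the two prescribed shapes, with the side conditions in case~\ref{item:normalization_of_crucial_portion:1}. I would organize the proof as follows.

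First I would address \emph{reducibility to a semi-canonical form}. The bases $\delta_1,\delta_2$ are already given in circular canonical form and $\gamma$ has rank $\le i-1$, so I would first replace $\gamma$ by its canonical form $\gamma'$ (computed by Step~1 and Step~2 of rank $\le i-1$), and likewise ensure the $2$-expansion $\delta_1^2\gamma'\delta_2^2$ is a rank $i-1$ canonical form. The only obstruction to this $2$-expansion being canonical is the behaviour at the two junctions $\delta_1^2\gamma'$ and $\gamma'\delta_2^2$ — but $\delta_1,\delta_2$ being in circular canonical form means each of $\delta_1^2$ and $\delta_2^2$ is canonical, and one applies the (rank $\le i-1$) Step~2 to the two crucial portions straddling those junctions. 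This is where I would invoke Proposition~\ref{prop:canonicalform_vs_subterms} (condition~\ref{item:canonicalform_vs_subterms-c}): it suffices to normalize the finitely many portions. The output is a rank $i$ term in semi-canonical form whose single rank $i$ crucial portion has bases that are Lyndon terms of rank $i-1$.

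Second, I would run \emph{Step~2 in rank $i$} on the resulting semi-canonical form. Since there is exactly one rank $i$ crucial portion, Steps 2.1–2.4 either leave the two limit terms intact (after a possible shift to restore Lyndon bases — but they are already Lyndon) or, if $\delta_1$ happens to equal $\delta_2$, Step~2.4 (a type~3 contraction) merges the two limit terms into a single one $\lp{r_1}\delta_1\rp{r_1}$ exactly when the central factor between them has been reduced to a power $\delta_1^p$; tracking the exponents through the type~4 and type~3 contractions gives $r_1=q_1+q_2+p$, which is case~\ref{item:normalization_of_crucial_portion:1}. Otherwise the two limit terms survive and Step~2.5 puts the crucial portion in canonical form, absorbing a prefix of the form $\delta_1^{\,?}\cdots$ into the left limit term and leaving a proper suffix $\varepsilon$ of $\delta_2$ in the middle; by the Claim proved just above in the text, the resulting $\lp{r_1}\delta_1\rp{r_1}\varepsilon\lp{r_2}\delta_2\rp{r_2}$ is canonical, which is case~\ref{item:normalization_of_crucial_portion:2}. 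Throughout, the rules used are of types 1–5 applied in rank $i$, hence $\rk{\alpha'}=i$.

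The dichotomy between \ref{item:normalization_of_crucial_portion:1} and \ref{item:normalization_of_crucial_portion:2} then follows by cases on whether the final term still has lt-length $2$ (case \ref{item:normalization_of_crucial_portion:2}) or lt-length $1$ (case \ref{item:normalization_of_crucial_portion:1}); in the latter case, the only way a type~3 contraction can have been applied is if all intervening limit terms had base $\delta_1$, which by condition~\ref{item:cf-2} forces $\delta_2=\delta_1$ and forces the rank $i-1$ factor between them to reduce to $\delta_1^p$, and one reads off $r_1$ from the exponent bookkeeping. \textbf{The main obstacle} I anticipate is not any single hard argument but the careful exponent bookkeeping through the interleaved type~4 expansions/contractions in Step~2.5 together with the type~3 merge, and verifying that the side condition ``canonical form of $\gamma$ is $\delta_1^p$'' in case~\ref{item:normalization_of_crucial_portion:1} is both necessary and sufficient — this requires knowing that no type~4 contraction at the $\delta_1^2\gamma$ or $\gamma\delta_2^2$ junction can introduce extra copies of a base other than $\delta_1$, which in turn leans on the Lyndon/unbordered property of the bases as in the Claim above.
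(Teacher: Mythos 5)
Your overall skeleton matches the paper's: normalize $\gamma$ at lower rank, repair the two junctions so that the $2$-expansion becomes canonical, then run Step~2 in rank $i$ and split into cases according to whether a type~3 contraction merges the two limit terms (forcing $\delta_1=\delta_2$ and a central factor $\delta_1^p$, with $r_1=q_1+q_2+p$). The base case $i=1$ and the final dichotomy are handled exactly as in the paper. However, your first paragraph has two genuine gaps.

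First, the argument must be an induction on $i$ in which the present lemma itself is the induction hypothesis. You say ``one applies the (rank $\le i-1$) Step~2 to the two crucial portions straddling those junctions,'' but reducing a rank $i-1$ crucial portion to canonical form is precisely an instance of this lemma one rank down (Step~2 alone does not suffice unless the portion is already semi-canonical, which for $i-1>1$ it need not be). More importantly, you need the \emph{output shape} guaranteed by the induction hypothesis --- that each junction portion reduces either to a single limit term or to a crucial portion with the \emph{same} bases --- in order to conclude, via Proposition~\ref{prop:canonicalform_vs_subterms}, that the result $\delta_1^2\beta\delta_2^2$ is canonical and that the bases $\delta_1,\delta_2$ of $\alpha$ have not been disturbed. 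Without that control the claim ``the only obstruction is at the two junctions'' does not close the argument.

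Second, the junction crucial portions are portions of the $2$-expansion $\delta_1^2\gamma'\delta_2^2$, not subterms of $\alpha$: the final portion of $\delta_1$ and the initial portion of $\delta_2$ are locked inside the rank~$i$ parentheses, so no rank $\le i-1$ rewriting of $\alpha$ can act across those boundaries. The paper's device is to first apply two rank~$i$ expansions of type~$4$, producing $\lp{q_1-1}\delta_1\rp{q_1-1}\delta_1\gamma'\delta_2\lp{q_2-1}\delta_2\rp{q_2-1}$, so that the middle factor $\delta_1\gamma'\delta_2$ --- which contains both junctions --- is an honest rank $i-1$ subterm that can be normalized by the induction hypothesis; the pulled-out copies are then reabsorbed by Steps~2.3--2.5 in rank $i$. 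Your plan as written skips this step, so ``ensure the $2$-expansion is canonical'' does not correspond to any legal derivation on $\alpha$. With the induction set up and the type~$4$ expansion inserted, the rest of your outline goes through.
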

\begin{proof} We proceed by induction  on $i$. Assume first that $i=1$. Then $\alpha$ is a
rank 1 term and so it is in semi-canonical form. The canonical form $\alpha'$ may therefore be obtained by the
application to $\alpha$ of Step 2 of the rank 1 canonical form algorithm. Moreover, since $\delta_1$ and
$\delta_2$ are Lyndon terms by hypothesis, Steps 2.1 and 2.2 of the algorithm do not apply. If a contraction of
type 3 is applied in the process (necessarily in Step 2.4), then
 $\delta_1$ and $\delta_2$ are clearly the same word and so, by
 Step 2.3, $\gamma=\delta_1^p$ for
some $p\geq 0$. Hence, $\alpha'=\;\lp{r_1}\delta_1\rp{r_1}\;$ with $r_1=q_1+q_2+p$. If a contraction of type 3
is not applied, then $\alpha'$ is a crucial portion of the form
$\;\lp{r_{\;\!\!1}}\delta_1\rp{r_{\;\!\!1}}\varepsilon\lp{\;\!r_{\;\!\!2}}\delta_2\rp{r_{\;\!\!2}}\;$ since the
transformation process on Step 2.5 does not change the bases $\delta_1$ and $\delta_2$.

Let now $i>1$ and suppose, by the induction hypothesis, that the lemma
holds for crucial portions of rank at most $i-1$. The term $\alpha'$
can be calculated as follows. First, use the rank $j$ canonical form
algorithm, where $j$ is the rank of $\gamma$, to compute the
canonical form $\gamma'$ of $\gamma$. The application of two rank
$i$ expansions of type 4 then give the term
$\;\lp{p_{\;\!\!1}}\delta_1\rp{p_{\;\!\!1}}\delta_1\gamma'\delta_2\lp{\;p_{\;\!\!2}}\delta_2\rp{p_{\;\!\!2}}\;$,
where $p_1=q_1-1$ and $p_2=q_2-1$. By
Proposition~\ref{prop:canonicalform_vs_subterms}, to reduce
$\delta_1\gamma'\delta_2$ to its canonical form $\beta$, it is
sufficient to reduce at most two rank $i-1$ crucial portions to their
canonical form. Indeed, when $\rk{\gamma'}<i-1$, at most the crucial
portion $\pi_1\gamma'\pi_2$ is not in canonical form, where $\pi_1$
is the final portion of $\delta_1$ and $\pi_2$ is the initial
portion of $\delta_2$. If $\rk{\gamma'}=i-1$, then at most the crucial
portions $\pi_1\rho_1$ and $\rho_2\pi_2$ are not in canonical form,
where $\rho_1$ and $\rho_2$ are respectively the initial and the
final portions of $\gamma'$. By condition~\ref{item:cf-2} and
Corollary~\ref{corol:circular_vs_canonical}~\ref{item:circular_vs_canonical-c},
the bases of those crucial portions are rank $i-2$ Lyndon terms in
circular canonical form. Hence, by the induction hypothesis, they
either reduce to a single rank $i-1$ limit term or to another rank $i-1$
crucial portion with the same bases. Therefore, the term
$\delta_1^2\beta\delta_2^2$ is in canonical form
 by Proposition~\ref{prop:canonicalform_vs_subterms}, whence
 $\;\lp{\;p_{\;\!\!1}}\delta_1\rp{p_{\;\!\!1}}\beta\lp{\;p_{\;\!\!2}}\delta_2\rp{p_{\;\!\!2}}\;$ is a rank~$i$
term in semi-canonical form. Since $\delta_1$ and $\delta_2$ are Lyndon terms by hypothesis, to reduce $\;
\lp{p_{\;\!\!1}}\delta_1\rp{p_{\;\!\!1}}\beta\lp{\;p_{\;\!\!2}}\delta_2\rp{p_{\;\!\!2}}\;$ to its canonical form
$\alpha'$ it suffices to apply to it Steps 2.3 to 2.5 of the rank $i$ canonical form algorithm. As in the case
$i=1$ above, one deduces that $\alpha'$ is of one of the forms of the statement, thus completing the inductive
step of the proof.
\end{proof}

The following is the analogue of Lemma~\ref{lemma:normalization_of_crucial_portion} to initial and final
portions. It has a similar proof and so we leave its verification to the reader.

\begin{lemma}\label{lemma:normalization_of_inicial-final_portion} Let
$i\geq 1$ and let $\alpha$ be a rank $i$ initial portion $\gamma\lp{q}\delta\rp{q}\;$ or final portion
$\;\lp{q}\delta\rp{q}\gamma$, where the base $\delta$ is a rank $i-1$ Lyndon term in circular canonical form and
$\rk{\gamma}\leq i-1$. The canonical form $\alpha'$ of $\alpha$ can be computed using Step $1$ of rank at most $i-1$
and Step $2$ of rank at most $i$ of the canonical form reduction algorithm. Moreover,  $\alpha'$ is a rank
$i$ term respectively of the forms  $\varepsilon\lp{\;\!r}\delta\rp{r}\;$ and
$\;\lp{r}\delta\rp{r}\varepsilon$.
\end{lemma}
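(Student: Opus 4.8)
The plan is to imitate the proof of Lemma~\ref{lemma:normalization_of_crucial_portion}, arguing by induction on $i$ and treating an initial portion $\alpha=\gamma\lp{q}\delta\rp{q}\;$ (the final-portion case $\;\lp{q}\delta\rp{q}\gamma$ being symmetric, via a type-$4_R$ expansion and the initial-portion clause of Proposition~\ref{prop:canonicalform_vs_subterms}). The situation is a degenerate case of a crucial portion: $\alpha$ has a single rank~$i$ limit term, so Steps~2.4 and~2.5 of the rank~$i$ algorithm cannot act on it and condition~\ref{item:cf-4} holds automatically for the last (and only) rank~$i$ limit term, nothing following it; only the left side of $\;\lp{q}\delta\rp{q}\;$ needs to be protected, by one type-$4$ expansion rather than two.

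In the base case $i=1$ the term $\alpha$ is already in semi-canonical form, so $\alpha'$ is produced by Step~2 of the rank~$1$ algorithm. Since $\delta$ is a Lyndon term, Steps~2.1 and~2.2 do nothing; with one limit term Steps~2.4 and~2.5 are vacuous, and the only contraction possible in Step~2.3 is of type~$4_L$, absorbing into the exponent the maximal power of $\delta$ that is a suffix of $\gamma$. This yields $\;\varepsilon\lp{r}\delta\rp{r}\;$ with $\varepsilon\in A^*$ not ending in $\delta$, which is in canonical form.

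For the inductive step ($i>1$) I would first reduce $\gamma$ to its canonical form $\gamma'$ using Step~1 of rank at most $i-1$ and Step~2 of rank at most $i-1$. One rank~$i$ expansion of type~$4_L$ then rewrites $\;\lp{q}\delta\rp{q}\;$ as $\;\delta\lp{p}\delta\rp{p}\;$ with $p=q-1$, giving $\;\gamma'\delta\lp{p}\delta\rp{p}\;$. I would next reduce $\gamma'\delta$ to a canonical form $\beta$: since $\gamma'$ and $\delta$ are in canonical form (the latter because it is in circular canonical form, by Corollary~\ref{corol:circular_vs_canonical}~\ref{item:circular_vs_canonical-a}), Proposition~\ref{prop:canonicalform_vs_subterms} shows that the only portion of $\gamma'\delta$ that may violate the canonical form conditions is the one straddling the $\gamma'$--$\delta$ junction --- a rank~$i-1$ crucial portion if $\rk{\gamma'}=i-1$, whose bases are rank~$i-2$ Lyndon terms in circular canonical form by condition~\ref{item:cf-2} and Corollary~\ref{corol:circular_vs_canonical}~\ref{item:circular_vs_canonical-c}, hence normalized by Lemma~\ref{lemma:normalization_of_crucial_portion}; or a rank~$i-1$ initial portion if $\rk{\gamma'}<i-1$, normalized by the induction hypothesis. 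In either case the normalization is local, it does not change any exponent, and it alters at most the first rank~$i-1$ limit term of $\delta$; consequently the last limit term of $\beta$ has the same base as, and is followed by the same rank-$(<i-1)$ factor as, the last limit term of $\delta$. Therefore appending a copy of $\delta$ to $\beta$ produces, at the new junction, the circular portion of $\delta$ up to the value of one exponent (to which conditions~\ref{item:cf-1}--\ref{item:cf-4} are insensitive), and this is in canonical form; so $\beta\delta^2$ is in canonical form by Proposition~\ref{prop:canonicalform_vs_subterms}, and $\;\beta\lp{p}\delta\rp{p}\;$ is a rank~$i$ term in semi-canonical form. I would finish by applying Steps~2.3--2.5 of the rank~$i$ algorithm to $\;\beta\lp{p}\delta\rp{p}\;$: Steps~2.4 and~2.5 are vacuous (a single rank~$i$ limit term), and Step~2.3 only performs type-$4_L$ contractions that absorb any trailing copies of $\delta$ in $\beta$ into the exponent, leaving the base $\delta$ intact. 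The outcome is the required canonical form $\;\alpha'=\varepsilon\lp{r}\delta\rp{r}\;$, computed using Step~1 of rank at most $i-1$ and Step~2 of rank at most $i$.

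The main obstacle, exactly as in Lemma~\ref{lemma:normalization_of_crucial_portion}, is the reduction of $\gamma'\delta$: one must verify that only the single junction portion needs normalizing, that this does not cascade into the rest of $\delta$ --- which holds because conditions~\ref{item:cf-1}--\ref{item:cf-4} ignore exponents and Lemma~\ref{lemma:normalization_of_crucial_portion} (or the induction hypothesis) returns a term with the same bases --- and that the suffix of $\delta$ survives intact, so that re-attaching $\;\lp{p}\delta\rp{p}\;$ yields a semi-canonical form. Once this is granted, the rest is routine bookkeeping and the final-portion case is entirely symmetric.
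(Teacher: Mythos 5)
Your argument is correct and is precisely the adaptation of the proof of Lemma~\ref{lemma:normalization_of_crucial_portion} that the paper intends: the paper omits this proof, remarking only that it is similar to that of the crucial-portion lemma, and your one-sided variant (a single type-$4$ expansion, reduction of the single junction portion via Lemma~\ref{lemma:normalization_of_crucial_portion} or the induction hypothesis, and the observation that Steps 2.4 and 2.5 are vacuous) is exactly that verification. No gaps.
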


As a consequence of Proposition~\ref{prop:canonicalform_vs_subterms}
and Lemmas~\ref{lemma:normalization_of_crucial_portion}
and~\ref{lemma:normalization_of_inicial-final_portion}, we get the
following property of the product of two canonical forms.
\begin{lemma}\label{lemma:normalization_of_product}
Let $\alpha$ and $\beta$ be terms in canonical form, let $(\alpha\beta)'$ be the canonical form of $\alpha\beta$
and let $\alpha_1$ be the final portion of $\alpha$ and $\beta_1$ be the initial portion of $\beta$.
\begin{enumerate}
\item\label{item:normalization_of_product-1}  When $\rk{\alpha}<\rk{\beta}$, $(\alpha\beta)'$
is obtained by reducing the initial portion $\alpha\beta_1$ of $\alpha\beta$ to its canonical form. In
particular, the lt-length of $(\alpha\beta)'$ is the lt-length of $\beta$.

\item\label{item:normalization_of_product-2}  When $\rk{\alpha}=\rk{\beta}$, $(\alpha\beta)'$
is obtained by reducing the crucial portion $\alpha_1\beta_1$ of $\alpha\beta$ to its canonical form. In
particular, if $\alpha$ and $\beta$ have lt-length of $m$ and $n$ respectively, then the lt-length of
$(\alpha\beta)'$ is either $m+n-1$ when $\alpha_1\beta_1$ is of
type~\ref{item:normalization_of_crucial_portion:1}, or $m+n$ when $\alpha_1\beta_1$ is of
type~\ref{item:normalization_of_crucial_portion:2}.

\item\label{item:normalization_of_product-3} When $\rk{\alpha}>\rk{\beta}$, $(\alpha\beta)'$
is obtained by reducing the final portion $\alpha_1\beta$ of $\alpha\beta$ to its canonical form. In particular,
the lt-length of $(\alpha\beta)'$ is the lt-length of $\alpha$.
\end{enumerate}
\end{lemma}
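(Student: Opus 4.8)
The plan is to reduce each of the three cases to the corresponding normalization lemma for portions already established, using Proposition~\ref{prop:canonicalform_vs_subterms} to localize the failure of canonicity in the product $\alpha\beta$. First I would write $\alpha=\gamma_0\lp{p_1}\mu_1\rp{p_1}\gamma_1\cdots\lp{p_m}\mu_m\rp{p_m}\gamma_m$ and $\beta=\lambda_0\lp{s_1}\nu_1\rp{s_1}\lambda_1\cdots\lp{s_n}\nu_n\rp{s_n}\lambda_n$ in the standard form~\eqref{eq:fact_normal_form}, so that $\alpha_1=\lp{p_m}\mu_m\rp{p_m}\gamma_m$ is the final portion of $\alpha$ and $\beta_1=\lambda_0\lp{s_1}\nu_1\rp{s_1}$ the initial portion of $\beta$, and observe that the juxtaposition $\alpha\beta$ is a term whose subterms are, apart from the newly created junction, exactly the subterms of $\alpha$ and of $\beta$, all of which are already in canonical form by hypothesis and Proposition~\ref{prop:canonicalform_vs_subterms}.

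For case~\ref{item:normalization_of_product-1}, when $\rk{\alpha}<\rk{\beta}$, the term $\alpha\beta$ has rank $\rk{\beta}$ and its top-rank limit terms are precisely those of $\beta$; the only top-rank portion of $\alpha\beta$ that might violate canonicity is its initial portion, which is $\alpha\lambda_0\lp{s_1}\nu_1\rp{s_1}\;=\alpha\beta_1$ (here $\alpha$ plays the role of the low-rank prefix $\gamma_0$). All crucial portions and the final portion of $\alpha\beta$ agree with those of $\beta$, hence are already in canonical form by Proposition~\ref{prop:canonicalform_vs_subterms}. By Lemma~\ref{lemma:normalization_of_inicial-final_portion} applied to the initial portion $\alpha\beta_1$ — whose base $\nu_1$ is a rank $(\rk{\beta}-1)$ Lyndon term in circular canonical form by condition~\ref{item:cf-2} and Corollary~\ref{corol:circular_vs_canonical}~\ref{item:circular_vs_canonical-c} — its canonical form has the shape $\varepsilon\lp{r}\nu_1\rp{r}\;$ with a single top-rank limit term, so replacing it leaves the other $n-1$ limit terms of $\beta$ untouched and the lt-length of $(\alpha\beta)'$ equals $n$, the lt-length of $\beta$. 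Case~\ref{item:normalization_of_product-3} is symmetric, using the final-portion clause of Lemma~\ref{lemma:normalization_of_inicial-final_portion}. For case~\ref{item:normalization_of_product-2}, when $\rk{\alpha}=\rk{\beta}=i$, the term $\alpha\beta$ has rank $i$ and its only possibly non-canonical top-rank portion is the crucial portion straddling the junction, namely $\lp{p_m}\mu_m\rp{p_m}\gamma_m\lambda_0\lp{s_1}\nu_1\rp{s_1}\;=\alpha_1\beta_1$; every other crucial portion, the initial portion, and the final portion of $\alpha\beta$ coincide with ones inside $\alpha$ or $\beta$ and so are in canonical form. Applying Lemma~\ref{lemma:normalization_of_crucial_portion} to $\alpha_1\beta_1$ — whose bases $\mu_m,\nu_1$ are rank $i-1$ Lyndon terms in circular canonical form, again by~\ref{item:cf-2} and Corollary~\ref{corol:circular_vs_canonical}~\ref{item:circular_vs_canonical-c} — the canonical form of $\alpha_1\beta_1$ is either of type~\ref{item:normalization_of_crucial_portion:1}, i.e.\ a single limit term $\lp{r_1}\mu_m\rp{r_1}\;$ (which fuses the last limit term of $\alpha$ with the first of $\beta$, giving lt-length $m+n-1$), or of type~\ref{item:normalization_of_crucial_portion:2}, a crucial portion $\lp{r_1}\mu_m\rp{r_1}\varepsilon\lp{r_2}\nu_1\rp{r_2}\;$ keeping both (giving lt-length $m+n$); in either case substituting this back into $\alpha\beta$ yields a semi-canonical, hence canonical, form by Proposition~\ref{prop:canonicalform_vs_subterms}.

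The one subtlety to be careful about — and the place where the argument needs a sentence or two of justification rather than a bare citation — is that the normalization of a single portion performed by Lemmas~\ref{lemma:normalization_of_crucial_portion} and~\ref{lemma:normalization_of_inicial-final_portion} does not disturb the canonicity of the neighbouring portions of $\alpha\beta$: the rewriting there only touches the top-rank limit terms occurring in that portion and the low-rank material between or beside them, and it produces a result with the \emph{same} top-rank bases (in the type~\ref{item:normalization_of_crucial_portion:2} / initial–final cases) or with one base in the type~\ref{item:normalization_of_crucial_portion:1} case, so the crucial portions of $\alpha\beta$ adjacent to the junction still have on their far side the original limit terms of $\alpha$ or $\beta$ and hence remain in canonical form. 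Once this non-interference is noted, Proposition~\ref{prop:canonicalform_vs_subterms}\,(c)$\Rightarrow$(a) upgrades ``all portions in canonical form'' to ``$(\alpha\beta)'$ in canonical form'', and the lt-length bookkeeping follows by simply counting top-rank limit terms in each of the three displayed outcomes.
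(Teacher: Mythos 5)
Your proposal is correct and follows essentially the same route as the paper: localize the only possibly non-canonical portion of $\alpha\beta$ at the junction via Proposition~\ref{prop:canonicalform_vs_subterms}, check the base hypotheses via condition~\ref{item:cf-2} and Corollary~\ref{corol:circular_vs_canonical}~\ref{item:circular_vs_canonical-c}, and invoke Lemmas~\ref{lemma:normalization_of_crucial_portion} and~\ref{lemma:normalization_of_inicial-final_portion} to conclude, with the lt-length count read off from the shape of the normalized portion. The paper only writes out case~\ref{item:normalization_of_product-1} and declares the others similar, so your fuller treatment (including the non-interference remark) is just a more explicit version of the same argument.
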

\begin{proof} For~\ref{item:normalization_of_product-1}, as $\rk{\alpha}<\rk{\beta}$, the initial portion of
$\alpha\beta$ is $\alpha\beta_1$ and, as $\beta$ is in canonical
form, the base of $\beta_1$ is a Lyndon term in circular canonical
form by condition~\ref{item:cf-2} and
Corollary~\ref{corol:circular_vs_canonical}~\ref{item:circular_vs_canonical-c}.
By Proposition~\ref{prop:canonicalform_vs_subterms}, since all
crucial portions and the final portion of $\alpha\beta$ are from
$\beta$ and $\beta$ is in canonical form, to obtain the canonical
form of $\alpha\beta$ it is sufficient to reduce $\alpha\beta_1$ to
its canonical form. Condition~\ref{item:normalization_of_product-1}
then follows immediately from
Lemma~\ref{lemma:normalization_of_inicial-final_portion}. The proof
of the other conditions is similar.
\end{proof}

Another consequence of the above lemmas
 is the following  property of limit terms, which will be fundamental for the construction of the first step of the canonical form
reduction algorithm.
\begin{proposition}\label{prop:limit_terms_reduction}
Let  $\pi=\ \lp{q}\rho\rp{q}\;$ be a rank $i+1$ limit term with
$i\geq 1$ and base $\rho$ in canonical form. Using the canonical
form reduction algorithm of rank at most $i$, it is possible to
derive from $\pi$ a semi-canonical form $\pi_1$ such that:
\begin{enumerate}
\item\label{item:limit_terms_reduction-a} If $\rho$ has lt-length $1$ and its circular portion is of type
(I), then either:
\begin{enumerate}[label=$(\arabic*)$]
\item $\rho$ is of the form
$\;\lp{q_{\;\!\!1}}\delta_1\rp{q_{\;\!\!1}}\;$ and $\pi_1=\ \;\!
\lp{\;\!qq_{\;\!\!1}}\delta_1\rp{qq_{\;\!\!1}}\;$; or

\item $\rho$ is of the
form $\gamma_0\lp{q_{\;\!\!1}}\delta_1\rp{q_{\;\!\!1}}\gamma_1$ with
$\gamma_1\gamma_0\sim \delta_1$ and
$\pi_1=\gamma_0\lp{r}\delta_1\rp{r}\gamma_1$ where
$r=q(q_{\;\!\!1}+1)-1$.
\end{enumerate}
In both cases $\pi_1$ is a rank $i$ term in canonical form.
\item\label{item:limit_terms_reduction-b} If $\rho$ has lt-length greater than $1$ or its circular portion is of type
(II), then $\pi_1$ is a rank $i+1$ term of the form
$\varepsilon_0\lp{r}\beta\rp{r}\varepsilon_1$ with
$\rk{\varepsilon_0}=\rk{\varepsilon_1}=i$.
\end{enumerate}
\end{proposition}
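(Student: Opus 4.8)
The plan is to reduce the single rank $i{+}1$ limit term $\pi = \lp{q}\rho\rp{q}$ to semi-canonical form by first normalizing its base. Since $\rho$ is already in canonical form, I would begin by writing $\rho$ in the factorized form~\eqref{eq:fact_normal_form}, say $\rho = \gamma_0\lp{q_1}\delta_1\rp{q_1}\gamma_1\cdots\lp{q_m}\delta_m\rp{q_m}\gamma_m$ where $m$ is the lt-length of $\rho$. The argument then splits into the two cases of the statement according to whether $m=1$ with circular portion of type~(I), or $m>1$ or the circular portion is of type~(II).

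For part~\ref{item:limit_terms_reduction-a}, suppose $m=1$ and the circular portion of $\rho$ is of type~\ref{item:normalization_of_crucial_portion:1}. By Corollary~\ref{corol:circular_vs_canonical}\ref{item:circular_vs_canonical-c}, the base $\delta_1$ is a rank $i{-}1$ Lyndon term in circular canonical form, so Lemma~\ref{lemma:normalization_of_crucial_portion} applies to the circular portion $\lp{q_1}\delta_1\rp{q_1}\gamma_1\gamma_0\lp{q_1}\delta_1\rp{q_1}$ of $\rho^2$ (or, if $\rho$ is not primitive, of $\rho$ itself), and since we are in type~\ref{item:normalization_of_crucial_portion:1}, its canonical form is a single limit term, forcing the canonical form of $\gamma_1\gamma_0$ to be $\delta_1^p$ for some $p\geq 0$; in particular $\gamma_1\gamma_0\sim\delta_1^{\,p+1}$ after absorbing one copy of $\delta_1$ from the limit term, and one checks $\delta_1$ is a power of the base. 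When $\gamma_0=\gamma_1=1$ (so $\rho=\lp{q_1}\delta_1\rp{q_1}$) we land in subcase~(1): apply the contraction of type~1 to get $\pi_1 = \lp{qq_1}\delta_1\rp{qq_1}$, which is a rank $i$ limit term with Lyndon base, hence a canonical form. Otherwise $\rho = \gamma_0\lp{q_1}\delta_1\rp{q_1}\gamma_1$ with $\gamma_1\gamma_0\sim\delta_1$ (after renaming to absorb the power $p$ via $4_L/4_R$-moves into the exponent $q_1$, which is legitimate since semi-/canonical form is independent of exponents); then Fact~\ref{fact:limit_term} with $\varepsilon_1=\gamma_0$, $\varepsilon_2=\gamma_1$, $\beta=\delta_1$ gives directly $\pi = \lp{q}\rho\rp{q} \sim \gamma_0\lp{r}\delta_1\rp{r}\gamma_1 = \pi_1$ with $r = q(q_1+1)-1$. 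That $\pi_1$ is in canonical form follows from Proposition~\ref{prop:canonicalform_vs_subterms}, checking conditions~\ref{item:cf-1}--\ref{item:cf-4} locally: \ref{item:cf-1} holds because $\gamma_0\gamma_1$ (a conjugate of $\delta_1$) is in circular canonical form by Corollary~\ref{corol:circular_vs_canonical}\ref{item:circular_vs_canonical-b}, and \ref{item:cf-2}--\ref{item:cf-4} are inherited from $\rho$ being a canonical form together with $\delta_1$ being Lyndon.

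For part~\ref{item:limit_terms_reduction-b}, suppose $m>1$ or the circular portion of $\rho$ is of type~\ref{item:normalization_of_crucial_portion:2}. Here the idea is to expand $\pi$ once by rule $4_L$ (or $4_R$) to $\lp{q-1}\rho\rp{q-1}\rho$, bringing two adjacent copies of $\rho$ into contact, then normalize the circular portion $\lp{q_m}\delta_m\rp{q_m}\gamma_m\gamma_0\lp{q_1}\delta_1\rp{q_1}$ (which is now a crucial portion inside) using Lemma~\ref{lemma:normalization_of_crucial_portion} and, if needed, Lemma~\ref{lemma:normalization_of_inicial-final_portion} on the surrounding portions; the hypothesis guarantees the result is not a collapse to a single limit term, so after pushing the two outermost limit terms back out of the $\lp{q-1}\ \rpp{q-1}$ and recollecting, one obtains a rank $i{+}1$ semi-canonical form $\pi_1 = \varepsilon_0\lp{r}\beta\rp{r}\varepsilon_1$ whose single remaining rank $i{+}1$ limit term $\lp{r}\beta\rp{r}$ comes from the glued copies of the base, with $\varepsilon_0,\varepsilon_1$ of rank $i$ (they are the surviving initial/final rank-$i$ material). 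The semi-canonical status of $\pi_1$ is again verified by Proposition~\ref{prop:canonicalform_vs_subterms} applied to the $2$-expansion.

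The main obstacle I expect is the bookkeeping in part~\ref{item:limit_terms_reduction-b}: one must argue carefully that when $m>1$ (or the circular portion is of type~II) the lt-length genuinely drops to exactly $1$ at rank $i{+}1$ — i.e., the interior rank-$(i{-}1)$ normalizations produced by Lemma~\ref{lemma:normalization_of_crucial_portion} cannot cascade to eliminate all but forcing a type-(I) collapse — and that the exponents combine correctly through the nested $4_L/4_R$ expansions and the final contraction of type~1. The delicate point is precisely controlling which of type~\ref{item:normalization_of_crucial_portion:1}/\ref{item:normalization_of_crucial_portion:2} each interior crucial portion falls into after the gluing, using the Lyndon/unbordered properties (Propositions~\ref{prop:fine_wilf} and~\ref{prop:Lyndon_less_than_suffix}) exactly as in the Claim inside Step~2.5.
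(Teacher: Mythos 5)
Your overall architecture matches the paper's (normalize the circular portion via Lemma~\ref{lemma:normalization_of_crucial_portion}, then case-split on lt-length and type), but there are two concrete gaps. In part~\ref{item:limit_terms_reduction-a}, when the canonical form of $\gamma_1\gamma_0$ is $\delta_1^p$ with $p\neq 0$, you propose to ``absorb the power $p$ into the exponent $q_1$'' rather than pin down $p$. This does not deliver the statement as written: the formula $r=q(q_{1}+1)-1$ refers to the exponent $q_1$ of the \emph{given} canonical form $\rho$, and absorbing $p-1$ copies would replace $q_1$ by $q_1+p-1$ and hence change $r$; moreover the copies of $\delta_1$ live in the reduced form of $\gamma_1\gamma_0$, which is split between the two ends of $\rho$, so they are not adjacent to the limit term and cannot simply be swallowed by $4_L/4_R$ moves. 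The paper instead \emph{proves} $p\in\{0,1\}$: if $p\neq 0$ then $\rk{\gamma_1\gamma_0}=i-1$ and Lemma~\ref{lemma:normalization_of_product} forces the lt-length of $\delta_1^p$ to equal that of $\gamma_0$ (or $\gamma_1$), while condition~\ref{item:cf-3} forbids $\delta_1$ from being a suffix of $\gamma_0$, whence $p=1$ and $\gamma_1\gamma_0\sim\delta_1$ holds on the nose. This argument is missing from your proposal and is what makes subcase $(2)$ correct with the original $q_1$.

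In part~\ref{item:limit_terms_reduction-b} the more serious gap is the subcase where $\rho$ has lt-length $1$ and circular portion of type~(II). Your plan (one expansion of type $4$, then normalize the junction) produces a term of the form $\gamma_0\lp{q_{\;\!\!1}}\delta_1\rp{q_{\;\!\!1}}\lpp{r}\beta\rpp{r}\gamma_1$, whose right flank $\gamma_1$ has rank at most $i-1$, so the required condition $\rk{\varepsilon_1}=i$ fails. The paper's resolution is a genuinely different maneuver you do not have: when $q\notin\{-1,1\}$, pick a prime divisor $p$ of $q$ and apply an expansion of type~$1$ to rewrite $\pi$ as $\lp{q/p}\rho^{\,p}\rp{q/p}$; since the circular portion is of type~(II), Lemma~\ref{lemma:normalization_of_product}~\ref{item:normalization_of_product-2} shows the canonical form of $\rho^p$ has lt-length $pn>1$, which reduces to the lt-length~$>1$ case; when $q\in\{-1,1\}$ one first expands by $4_R$ to $\lp{q-1}\rho\rp{q-1}\rho$ with $q-1\in\{-2,0\}$ and then applies the previous cases and renormalizes the tail. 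Also note that for lt-length $>1$ the paper needs \emph{two} type-$4$ expansions (yielding $\rho\lpp{q-2}\rho\rpp{q-2}\rho$) followed by shifts, so that both flanks $\varepsilon_0$ and $\varepsilon_1$ retain a rank-$i$ limit term; your single expansion only exposes one junction. Your closing worry about the lt-length ``dropping to exactly $1$'' is not the real issue; the issue is guaranteeing both flanking terms have rank exactly $i$.
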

\begin{proof} Let $\rho=\gamma_0\lp{q_{\;\!\!1}}\delta_1\rp{q_{\;\!\!1}}\gamma_1\cdots\lp{q_{\;\!\!n}}\delta_n\rp{q_{\;\!\!n}}\gamma_n$ be
    the canonical form for $\rho$ and let
    $\alpha=\ \;\lp{q_{\;\!\!n}}\delta_n\rp{q_{\;\!\!n}}\gamma_n\gamma_0\lp{q_{\;\!\!1}}\delta_1\rp{q_{\;\!\!1}}\;$
    be the circular portion of $\rho$. In order to prove~\ref{item:limit_terms_reduction-a}, suppose that $n=1$ and that
    $\alpha$ is of type (I). Then $\rho=\gamma_0\lp{q_{\;\!\!1}}\delta_1\rp{q_{\;\!\!1}}\gamma_1$ and, by
    Lemma~\ref{lemma:normalization_of_crucial_portion}, $\alpha=\
    \;\lp{q_{\;\!\!1}}\delta_1\rp{q_{\;\!\!1}}\gamma_1\gamma_0\lp{q_{\;\!\!1}}\delta_1\rp{q_{\;\!\!1}}\;$
    reduces to a limit term $\;\lp{\;r_1}\delta_1\rp{r_1}\;$ and the canonical form of $\gamma_1\gamma_0$
    is $\delta^p_1$ for some $p\geq 0$. If $p=0$ then $\gamma_0$ and
$\gamma_1$ are both the empty term and so $\pi=\;\lpp{q}\ \lp{\ q_{\;\!\!1}}\delta_1\rp{q_{\;\!\!1}\;}\
\rpp{q}\;$. In this case, applying a contraction of type $1$ one gets a semi-canonical form $\pi_1=\
\lp{\;qq_{\;\!\!1}}\delta_1\rp{qq_{\;\!\!1}}\;$, which is in fact the canonical form of $\pi$. Suppose now that
$p\neq 0$. In this case the rank of $\gamma_1\gamma_0$ is the rank of $\delta_1$, that is,
$\rk{\gamma_1\gamma_0}=i-1$. If $\rk{\gamma_1}<\rk{\gamma_0}$ then, by
Lemma~\ref{lemma:normalization_of_product}~\ref{item:normalization_of_product-1}, the lt-length of $\delta^p_1$
is the lt-length of $\gamma_0$. As $\delta_1$ is not a suffix of $\gamma_0$ by condition~\ref{item:cf-3} of the
canonical form definition, we deduce that $p=1$. The events $\rk{\gamma_1}=\rk{\gamma_0}$ and
$\rk{\gamma_1}>\rk{\gamma_0}$ are treated analogously and give the same result $p=1$. This shows that $\delta_1$
is the canonical form of $\gamma_1\gamma_0$ and, so, that $\gamma_1\gamma_0\sim \delta_1$. By
Fact~\ref{fact:limit_term},  $\pi$ reduces to the term $\gamma_0\lp{r}\delta_1\rp{r}\gamma_1$, where
$r=q(q_{\;\!\!1}+1)-1$, which is obviously in canonical form. This completes the proof
of~\ref{item:limit_terms_reduction-a}.

For~\ref{item:limit_terms_reduction-b}, suppose first  $n>1$. By
Lemma~\ref{lemma:normalization_of_crucial_portion},
    $\alpha$ reduces to a crucial portion of the form
$\;\lp{\;r_{\;\!\!n}}\delta_n\rp{r_{\;\!\!n}}\sigma$, where $\sigma$
is either the empty term (in which case $\delta_n=\delta_1$) or a
term of the form
$\varepsilon\lp{\;r_{\;\!\!1}}\delta_1\rp{r_{\;\!\!1}}\;$. Let
$q'=q-1$ and  $q''=q-2$.  The following derivation is now easily
deduced
 $$\begin{array}{rl}
\pi\
\rightarrow&\hspace*{-2mm}\gamma_0\lp{q_{\;\!\!1}}\delta_1\rp{q_{\;\!\!1}}\gamma_1\cdots\lp{q_{\;\!\!n}}\delta_n\rp{q_{\;\!\!n}}\gamma_n
\lpp{\
q''}\!\gamma_0\lp{q_{\;\!\!1}}\delta_1\rp{q_{\;\!\!1}}\gamma_1\cdots\lp{q_{\;\!\!n}}\delta_n\rp{q_{\;\!\!n}}\gamma_n\rpp{\;q''}
\gamma_0\lp{q_{\;\!\!1}}\delta_1\rp{q_{\;\!\!1}}\gamma_1\cdots\lp{q_{\;\!\!n}}\delta_n\rp{q_{\;\!\!n}}\gamma_n\\

\rightarrow&\hspace*{-2mm}\gamma_0\lp{q_{\;\!\!1}}\delta_1\rp{q_{\;\!\!1}}\gamma_1\cdots
\lp{q_{\;\!\!n}}\delta_n\rp{q_{\;\!\!n}}\gamma_n \gamma_0\lp{q_{\;\!\!1}}\delta_1\rp{q_{\;\!\!1}\;}\ \;\!\lpp{\
q''}\!\gamma_1\cdots
 \lp{q_{\;\!\!n}}\delta_n\rp{q_{\;\!\!n}}\gamma_n
\gamma_0\lp{q_{\;\!\!1}}\delta_1\rp{q_{\;\!\!1}\;}\ \;\!
\rpp{\;q''}\gamma_1\cdots\lp{q_{\;\!\!n}}\delta_n\rp{q_{\;\!\!n}}\gamma_n\\

\stackrel{*\;}{\rightarrow}&\hspace*{-2mm}\gamma_0\lp{q_{\;\!\!1}}\delta_1\rp{q_{\;\!\!1}}\gamma_1\cdots
\lp{\;r_{\;\!\!n}}\delta_n\rp{r_{\;\!\!n}}\sigma \lpp{\ q''}\!\gamma_1\cdots
 \lp{\;r_{\;\!\!n}}\delta_n\rp{r_{\;\!\!n}}\sigma
 \rpp{\;q''}\gamma_1\cdots\lp{q_{\;\!\!n}}\delta_n\rp{q_{\;\!\!n}}\gamma_n\\

 \rightarrow&\hspace*{-2mm}\gamma_0\lp{q_{\;\!\!1}}\delta_1\rp{q_{\;\!\!1}}\ \;\! \lpp{\
q'}\!\gamma_1\cdots
 \lp{\;r_{\;\!\!n}}\delta_n\rp{r_{\;\!\!n}}\sigma
 \rpp{q'}\gamma_1\cdots\lp{q_{\;\!\!n}}\delta_n\rp{q_{\;\!\!n}}\gamma_n.
\end{array}$$
This last term is clearly in semi-canonical form. On the other hand it verifies the properties of the term
$\pi_1$ in~\ref{item:limit_terms_reduction-b}. Suppose now that $\alpha$ is of type (II). If $q\not\in\{-1,1\}$,
then $q$ has some prime divisor $p$. Let $k=\frac{q}{p}$. Applying an expansion of type 1 to $\pi$ one gets the
term $\lp{k}\rho^p\rp{k}\;$. Hence, $\pi$ reduces to the term $\lp{k}\tau\rp{k}\;$ where $\tau$ is the
canonical form of $\rho^p$. By Lemma~\ref{lemma:normalization_of_product}~\ref{item:normalization_of_product-2},
as $\alpha$ is of type (II), the lt-length of $\tau$ is exactly $pn$ and thus greater than $1$. Therefore, by
the case $n>1$ above, $\lp{k}\tau\rp{k}\;$, and on its turn  $\pi$, reduces to a term $\pi_1$ as stated
in~\ref{item:limit_terms_reduction-b}. Finally, for $q\in\{-1,1\}$, apply an expansion of type $4_R$ to $\pi$ in
order to obtain the term $\ \lp{q'}\rho\rp{q'}\;\rho$, where $q'=q-1\in\{-2,0\}$. By the previous cases, this
term reduces to some term   $\varepsilon_0\lp{r}\beta\rp{r}\varepsilon_1\rho$ with
$\varepsilon_0\lp{r}\beta\rp{r}\varepsilon_1$ a rank $i+1$  semi-canonical form such that
$\rk{\varepsilon_0}=\rk{\varepsilon_1}=i$. To complete the construction of $\pi_1$ in the current case it
suffices to put $\varepsilon_1\rho$ in canonical form,  thus showing that condition~\ref{item:limit_terms_reduction-b}
holds.
\end{proof}

We are now ready to present the first step of the canonical form reduction algorithm.

\paragraph{\textbf{Step 1.}} The procedure to compute an
equivalent semi-canonical form $\alpha_1$ of an arbitrary rank $i+1$ term $\alpha$ is as follows.
\begin{enumerate}[label=1.\arabic*)]
\item\label{step11} In case $i=0$, declare $\alpha_1$ to be
$\alpha$ and stop (since every rank $1$ term is already in semi-canonical form).

\item\label{step12} Apply the rank $i$ canonical form reduction algorithm to each base of $\alpha$. We note that, this way, some
 (or all) of the original rank $i+1$ limit terms may have been transformed
into terms with strictly smaller rank. If the term obtained is rank
$j+1$ with $j<i$, then go to the beginning of Step 1 and take $i$ as
$j$.

\item\label{step13} Replace each rank $i+1$ limit term $\pi$ by the
semi-canonical form $\pi_1$ given by Proposition~\ref{prop:limit_terms_reduction}. Once again, if the term
obtained is no longer of rank $i+1$, then go to the beginning of Step 1.

\item\label{step14} Apply the rank $i$ canonical form reduction algorithm to each
primary subterm that does not occur as a base.
\end{enumerate}

The $\bk$-term $\alpha_1$ that emerges from this procedure is indeed a term in semi-canonical form. This is an
immediate consequence of Proposition~\ref{prop:limit_terms_reduction}~\ref{item:limit_terms_reduction-b} since
the bases of $\alpha_1$ are the bases $\beta$ of the subterms
$\pi_1=\varepsilon_0\lp{r}\beta\rp{r}\varepsilon_1$, introduced on Step 1.3, that come from that result and, so,
are terms in circular canonical form. Moreover, as $\varepsilon_0$ and $\varepsilon_1$ are rank $i$ terms
in canonical form, the reduction made by Step 1.4 does not change the final portion of $\varepsilon_0$ neither
the initial portion of $\varepsilon_1$, except for possible modifications of the exponents of the corresponding
limit terms. As a result, the term obtained from $\alpha_1$ by replacing each limit term $\;\lp{r}\beta\rp{r}\;$
by $\beta^2$ is in canonical form, so that $\alpha_1$ is in semi-canonical form.

\section{Languages associated with $\bk$-terms}\label{section:expansions}
 An alternative proof of correctness  of McCammond's normal form reduction algorithm for
$\omega$-terms over $\A$ was presented in~\cite{Almeida&Costa&Zeitoun:2012} and is based on properties of
certain regular languages $L_{\mathtt n}(\alpha)$ associated with $\omega$-terms $\alpha$, where ${\mathtt n}$
is a positive integer. Informally, the language $L_{\mathtt n}(\alpha)$ is obtained from $\alpha$ by replacing
each $\omega$-power by a power of exponent at least ${\mathtt n}$. The key property of the languages $L_{\mathtt
n}(\alpha)$ is that they are star-free when $\alpha$ is in McCammond's normal form and ${\mathtt n}$ is
sufficiently large. In this paper, similar languages will play a fundamental role in the proof of
Theorem~\ref{theo:canonical_form}. Given a $\bk$-term $\alpha$  and a pair $({\mathtt n},{\mathtt p})$ of
positive integers, we define below a language $L_{{\mathtt n},{\mathtt p}}(\alpha)$ whose elements, informally
speaking, are obtained from $\alpha$ by  recursively replacing each $\omega$ by an integer beyond ${\mathtt n}$
and congruent modulo ${\mathtt p}$ with that threshold. In particular, when the $\bk$-term $\alpha$ is an
$\omega$-term, $L_{{\mathtt n},1}(\alpha)=L_{{\mathtt n}}(\alpha)$. So, the above operators $L_{{\mathtt n}}$
associated with $\omega$-terms constitute an instance of a more general concept of operators $L_{{\mathtt
n},{\mathtt p}}$ associated with $\bk$-terms. Moreover, as we shall see below, the basic properties of the
operators $L_{{\mathtt n}}$ presented in~\cite{Almeida&Costa&Zeitoun:2012} extend easily to $L_{{\mathtt
n},{\mathtt p}}$.

\paragraph{\textbf{Expansions of $\bk$-terms.}} Let $\alpha$ be a $\bk$-term. Denote by $Q(\alpha)$
the set of all $q\in\Z$ for which there exists a subterm of $\alpha$
of the form $\beta^{\omega+q}$, that is,
$$Q(\alpha)=\{q\in\Z:\mbox{$\lp{q}$ occurs in the well-parenthesized
word of $A_\Z$ representing $\alpha$}\}.$$ Now, let $\nu(\alpha)$ be the nonnegative integer
$$\nu(\alpha)=\mbox{max}\{|q|:q\in Q(\alpha)\},$$
named the \emph{scale of $\alpha$}, and note the following immediate
property of this parameter.
\begin{remark}\label{remark:parameter_varsigma}
 If $\alpha'$ is either a subterm or an expansion of a $\bk$-term $\alpha$, then
$\nu(\alpha')\leq \nu(\alpha)$.
\end{remark}

Fix a  pair of positive integers $({\mathtt n},{\mathtt p})$. Usually we will impose high lower bounds for such
integers in order to secure the properties we need. For now, when the pair $({\mathtt n},{\mathtt p})$ is
associated with a $\bk$-term $\alpha$, we assume that ${\mathtt n}$ is greater than the scale
$\nu(\alpha)$ of $\alpha$. For each $q\in Q(\alpha)$, we let $\overline{q}$ be the set
\begin{equation}\label{eq:def_overline_q}
\overline{q}=\{{\mathtt n}+j{\mathtt p}+q:j\geq 0\}
\end{equation} of positive integers
with minimal element ${\mathtt n}+q$ and congruent mod ${\mathtt
p}$.

 The language $L_{{\mathtt n},{\mathtt p}}(\alpha)$ is formally defined as
follows, by means of sequential expansions that unfold the outermost
$(\omega+q)$-powers enclosing subterms of maximum rank.

\begin{definition}[Word expansions] When
$\alpha\in A^*$, we let $E_{{\mathtt n},{\mathtt p}}(\alpha)=\{\alpha\}$. Otherwise, let
$\alpha=\gamma_0\delta_1^{\omega+q_1}\gamma_1\cdots \delta_r^{\omega+q_r}\gamma_r$ be a rank $i+1$ $\bk$-term,
where $\rk {\delta_k}=i$ and $\rk{\gamma_j}\leq i$ for all $k$ and
 $j$. We let
 $$E_{{\mathtt n},{\mathtt p}}(\alpha) =\{\gamma_0\delta_1^{n_1}\gamma_1\cdots\delta_r^{n_r}\gamma_r:\mbox{ $n_j\in\overline{q_j}$ for $j=1,\ldots,r$}\}.$$
  For a set $K$ of $\bk$-terms, we let
$E_{{\mathtt n},{\mathtt p}}(K)=\bigcup_{\beta\in
  K}E_{{\mathtt n},{\mathtt p}}(\beta)$. We then let $$L_{{\mathtt n},{\mathtt p}}(\alpha)=E_{{\mathtt n},{\mathtt p}}^{\rk
  \alpha}(\alpha),$$  where $E_{{\mathtt n},{\mathtt p}}^j$ is the $j$-fold iteration of the operator
  $E_{{\mathtt n},{\mathtt p}}$.
\end{definition}

  For example, let $\alpha=(ab)^{\omega -1}aaba^{\omega}b(ab)^{\omega+5}a$ and let
$({\mathtt n},{\mathtt p})$ be arbitrary. We have
$$L_{{\mathtt n},{\mathtt p}}(\alpha)=E_{{\mathtt n},{\mathtt p}}(\alpha)=\{(ab)^{{\mathtt n}+j{\mathtt p}-1}aaba^{{\mathtt n}
+k{\mathtt p}}b(ab)^{{\mathtt n}+\ell {\mathtt p}+5}a:j,k,\ell\geq 0\}.$$  Consider now the rank $2$ canonical
form $\beta=(a^{\omega-1}b)^{\omega}a^{\omega+1}$. We have $L_{8,4}(\beta)=E_{8,4}^2(\beta)$ and
$E_{8,4}(\beta)=\{(a^{\omega-1 }b)^{8+4j}a^{\omega+1}:j\geq 0\}$. Hence
$$\begin{array}{rl}
L_{8,4}(\beta)&=\bigcup_{j\geq 0}E_{8,4}((a^{\omega-1 }b)^{8+4j}a^{\omega+1})\\[1mm]
&=\bigcup_{j\geq 0}\{a^{7+4k_1}ba^{7+4k_2}b\cdots a^{7+4k_{8+4j}}ba^{9+4k_{9+4j}}:k_1,\ldots,k_{9+4j}\geq
0\}.\end{array}$$

The next lemma is analogous to~\cite[Lemma~3.2]{Almeida&Costa&Zeitoun:2012} and presents some simple properties
of the operators $E_{{\mathtt n},{\mathtt p}}$ and $L_{{\mathtt n},{\mathtt p}}$.
\begin{lemma}\label{lemma:Lnp}
 Let $({\mathtt n},{\mathtt p})$ be a pair of positive integers. The following formulas hold,
 where we assume that ${\mathtt n}$ is greater than the scale of all $\bk$-terms involved:
  \begin{enumerate}
  \item\label{item:Ln-1} for $\bk$-terms  $\alpha$ and $\beta$,
    $$E_{{\mathtt n},{\mathtt p}}(\alpha\beta)=
    \begin{cases}
      E_{{\mathtt n},{\mathtt p}}(\alpha)\,E_{{\mathtt n},{\mathtt p}}(\beta) & \mbox{if $\rk \alpha=\rk \beta$}\\
      \alpha\,E_{{\mathtt n},{\mathtt p}}(\beta) & \mbox{if $\rk \alpha<\rk \beta$}\\
      E_{{\mathtt n},{\mathtt p}}(\alpha)\,\beta & \mbox{if $\rk \alpha>\rk \beta$};
    \end{cases}
    $$
  \item\label{item:Ln-1b} for a $\bk$-term $\alpha$, $L_{{\mathtt n},{\mathtt p}}(\alpha)=L_{{\mathtt n},{\mathtt p}}
  (E_{{\mathtt n},{\mathtt p}}(\alpha))$;
  \item\label{item:Ln-2} for sets $U$ and $V$ of $\bk$-terms, we have
  $L_{{\mathtt n},{\mathtt p}}(UV)=L_{{\mathtt n},{\mathtt p}}(U)\,L_{{\mathtt n},{\mathtt p}}(V)$;
  \item\label{item:Ln-3} for a $\bk$-term $\alpha=\gamma_0\delta_1^{\omega+q_1}\gamma_1\cdots \delta_r^{\omega+q_r}\gamma_r$
  with each $\rk {\delta_k}=i$ and $\rk{\gamma_j}\leq i$,
    $$L_{{\mathtt n},{\mathtt p}}(\alpha)=L_{{\mathtt n},{\mathtt p}}(\gamma_0)\,L_{{\mathtt n},{\mathtt p}}(\delta_1^{\omega+q_1})\,
    L_{{\mathtt n},{\mathtt p}}(\gamma_1)\cdots
    L_{{\mathtt n},{\mathtt p}}(\delta_r^{\omega+q_r})\, L_{{\mathtt n},{\mathtt p}}(\gamma_r);$$
  \item\label{item:Ln-4} for a $\bk$-term $\alpha$ and an integer $q$,
    $L_{{\mathtt n},{\mathtt p}}(\alpha^{\omega+q})= L_{{\mathtt n},{\mathtt p}}(\alpha)^{{\mathtt n}+q}(L_{{\mathtt n},{\mathtt p}}(\alpha)^{\mathtt p})^*$.
  \end{enumerate}
\end{lemma}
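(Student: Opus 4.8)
The plan is to establish the five identities in the order stated, each one relying on the preceding ones together with the defining recursion for $E_{{\mathtt n},{\mathtt p}}$ and $L_{{\mathtt n},{\mathtt p}}$, so that the only genuine content is a single induction on rank.

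For \ref{item:Ln-1} I would simply unfold the definition of $E_{{\mathtt n},{\mathtt p}}$ on $\alpha\beta$. Writing $\alpha$ and $\beta$ in their factored forms as in the definition of word expansions and concatenating, one observes: if $\rk{\alpha}=\rk{\beta}=i+1$, then $\alpha\beta$ is again a rank~$i+1$ term whose outermost rank~$i+1$ limit subterms are exactly those of $\alpha$ followed by those of $\beta$ (the ``seam'' $\gamma_r\gamma_0'$ between the last primary subterm of $\alpha$ and the first of $\beta$ has rank $\le i$ and so is not touched), whence one step of expansion of $\alpha\beta$ is the concatenation of the one-step expansions of $\alpha$ and of $\beta$; if $\rk{\alpha}<\rk{\beta}$, then $\alpha$ is absorbed into the rank-$\le i$ prefix of $\alpha\beta$ and is left untouched by $E_{{\mathtt n},{\mathtt p}}$, giving $\alpha\,E_{{\mathtt n},{\mathtt p}}(\beta)$; the case $\rk{\alpha}>\rk{\beta}$ is symmetric, and the degenerate cases $\alpha\in A^*$ or $\beta\in A^*$ are immediate. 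Formula \ref{item:Ln-1b} then rests on the observation that $E_{{\mathtt n},{\mathtt p}}$ fixes rank-$0$ terms and lowers the maximum rank of a term by at least one, so that $E_{{\mathtt n},{\mathtt p}}^{\,k}(\beta)=L_{{\mathtt n},{\mathtt p}}(\beta)$ for every $\bk$-term $\beta$ and every $k\ge\rk{\beta}$; since every $\beta\in E_{{\mathtt n},{\mathtt p}}(\alpha)$ has rank strictly less than $\rk{\alpha}$ and scale at most $\nu(\alpha)<{\mathtt n}$ by Remark~\ref{remark:parameter_varsigma}, writing $\rk{\alpha}=i+1$ gives $L_{{\mathtt n},{\mathtt p}}(\alpha)=E_{{\mathtt n},{\mathtt p}}^{\,i+1}(\alpha)=E_{{\mathtt n},{\mathtt p}}^{\,i}(E_{{\mathtt n},{\mathtt p}}(\alpha))=\bigcup_{\beta\in E_{{\mathtt n},{\mathtt p}}(\alpha)}L_{{\mathtt n},{\mathtt p}}(\beta)=L_{{\mathtt n},{\mathtt p}}(E_{{\mathtt n},{\mathtt p}}(\alpha))$, the rank-$0$ case being trivial.

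For \ref{item:Ln-2} it suffices, by taking unions, to prove $L_{{\mathtt n},{\mathtt p}}(\alpha\beta)=L_{{\mathtt n},{\mathtt p}}(\alpha)\,L_{{\mathtt n},{\mathtt p}}(\beta)$ for $\bk$-terms $\alpha,\beta$, and this I would do by induction on $\rk{\alpha}+\rk{\beta}$, the base case (both ranks $0$) being clear. In the inductive step, rewrite $L_{{\mathtt n},{\mathtt p}}(\alpha\beta)=L_{{\mathtt n},{\mathtt p}}(E_{{\mathtt n},{\mathtt p}}(\alpha\beta))$ by \ref{item:Ln-1b}, split $E_{{\mathtt n},{\mathtt p}}(\alpha\beta)$ via \ref{item:Ln-1} into a product of two sets each of whose terms has strictly smaller rank than the corresponding factor, and conclude by applying the induction hypothesis termwise and invoking \ref{item:Ln-1b} once more for $\alpha$ and for $\beta$; here the induction hypothesis must be phrased for arbitrary sets of bounded rank. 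Formula \ref{item:Ln-3} follows by iterating \ref{item:Ln-2} (with singleton sets) along the factorization $\alpha=\gamma_0\,\delta_1^{\omega+q_1}\,\gamma_1\cdots\delta_r^{\omega+q_r}\,\gamma_r$. Finally, for \ref{item:Ln-4} one has $E_{{\mathtt n},{\mathtt p}}(\alpha^{\omega+q})=\{\alpha^{{\mathtt n}+q+j{\mathtt p}}:j\ge 0\}$, with ${\mathtt n}+q>0$ since ${\mathtt n}>\nu(\alpha^{\omega+q})\ge|q|$, so \ref{item:Ln-1b} together with $L_{{\mathtt n},{\mathtt p}}(\alpha^m)=L_{{\mathtt n},{\mathtt p}}(\alpha)^m$ (iterating the singleton case of \ref{item:Ln-2}) yields $L_{{\mathtt n},{\mathtt p}}(\alpha^{\omega+q})=\bigcup_{j\ge 0}L_{{\mathtt n},{\mathtt p}}(\alpha)^{{\mathtt n}+q+j{\mathtt p}}=L_{{\mathtt n},{\mathtt p}}(\alpha)^{{\mathtt n}+q}(L_{{\mathtt n},{\mathtt p}}(\alpha)^{\mathtt p})^*$.

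The main obstacle is the rank bookkeeping in \ref{item:Ln-1} and in the inductive step of \ref{item:Ln-2}: one must carefully verify that a ``seam'' factor $\gamma_r\gamma_0'$ arising from a concatenation never raises the rank, and that the induction for \ref{item:Ln-2} is set up for sets of $\bk$-terms of bounded rank (not just single terms) so that it can be applied to the sets $E_{{\mathtt n},{\mathtt p}}(\alpha)$ and $E_{{\mathtt n},{\mathtt p}}(\beta)$. Once these points are handled, the remaining verifications are routine unwindings of the definitions.
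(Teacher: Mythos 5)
Your proposal is correct and follows essentially the same route as the paper: items \ref{item:Ln-1}--\ref{item:Ln-3} are the routine unwinding of the definition of $E_{{\mathtt n},{\mathtt p}}$ together with an induction on rank (which the paper simply delegates to the corresponding Lemma~3.2 of~\cite{Almeida&Costa&Zeitoun:2012}), and your computation for \ref{item:Ln-4} is identical to the displayed chain of equalities in the paper's proof. The rank bookkeeping you flag (the seam factor having rank $\le i$, and $E_{{\mathtt n},{\mathtt p}}$ strictly lowering rank while preserving the scale bound via Remark~\ref{remark:parameter_varsigma}) is exactly the right point to verify, and you handle it correctly.
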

\begin{proof} The proof of each condition~\ref{item:Ln-1}--\ref{item:Ln-3} is identical to the proof of the
corresponding statement in~\cite[Lemma~3.2]{Almeida&Costa&Zeitoun:2012}. For~\ref{item:Ln-4}, we only need to
introduce minor changes. We have
  $$\begin{array}{rl}
  L_{{\mathtt n},{\mathtt p}}(\alpha^{\omega+q})&
  \subrel{\hbox{\scriptsize\ref{item:Ln-1b}}}{=}
  L_{{\mathtt n},{\mathtt p}}(E_{{\mathtt n},{\mathtt p}}(\alpha^{\omega+q}))
  =\bigcup_{j\geq 0}L_{{\mathtt n},{\mathtt p}}(\alpha^{{\mathtt n}+j{\mathtt p}+q})\\[1mm]
  &\subrel{\hbox{\scriptsize\ref{item:Ln-2}}}{=}
  \bigcup_{j\geq 0}L_{{\mathtt n},{\mathtt p}}(\alpha)^{{\mathtt n}+j{\mathtt p}+q}
  =L_{{\mathtt n},{\mathtt p}}(\alpha)^{{\mathtt n}+q}(L_{{\mathtt n},{\mathtt p}}(\alpha)^{{\mathtt p}})^*,\end{array}$$
thus completing the proof of the lemma.
\end{proof}

The following important property of the languages $L_{{\mathtt n},{\mathtt p}}(\alpha)$ can now be easily
deduced.
\begin{proposition}\label{prop:Lnp_regular}
Let $\alpha$ be a $\bk$-term in canonical form and let  $({\mathtt n},{\mathtt p})$ be a pair of positive
integers with ${\mathtt n}>\nu(\alpha)$. Then  $L_{{\mathtt n},{\mathtt p}}(\alpha)$ is a regular
language.
\end{proposition}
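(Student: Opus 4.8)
The plan is to prove this by induction on the rank of $\alpha$, using the recursive description of $L_{{\mathtt n},{\mathtt p}}$ provided by Lemma~\ref{lemma:Lnp}. The base case $\rk{\alpha}=0$ is immediate, since then $\alpha\in A^*$ and $L_{{\mathtt n},{\mathtt p}}(\alpha)=\{\alpha\}$ is finite, hence regular. For the inductive step, write $\alpha$ in the form~\eqref{eq:fact_normal_form} as $\alpha=\gamma_0\lp{q_{\;\!\!1}}\delta_1\rp{q_{\;\!\!1}}\gamma_1\cdots\lp{q_{\;\!\!n}}\delta_n\rp{q_{\;\!\!n}}\gamma_n$ with each $\rk{\gamma_j}\leq i$ and $\rk{\delta_k}=i$. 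By Lemma~\ref{lemma:Lnp}\ref{item:Ln-3}, $L_{{\mathtt n},{\mathtt p}}(\alpha)$ decomposes as the concatenation
$$L_{{\mathtt n},{\mathtt p}}(\gamma_0)\,L_{{\mathtt n},{\mathtt p}}(\delta_1^{\omega+q_1})\,L_{{\mathtt n},{\mathtt p}}(\gamma_1)\cdots L_{{\mathtt n},{\mathtt p}}(\delta_n^{\omega+q_n})\,L_{{\mathtt n},{\mathtt p}}(\gamma_n),$$
and since the class of regular languages is closed under finite concatenation, it suffices to show that each factor is regular.

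For the factors $L_{{\mathtt n},{\mathtt p}}(\gamma_j)$: each $\gamma_j$ is a primary subterm of $\alpha$, hence a subterm of $\alpha$, hence in canonical form by Proposition~\ref{prop:canonicalform_vs_subterms}, and it has rank at most $i<\rk{\alpha}$; also $\nu(\gamma_j)\leq\nu(\alpha)<{\mathtt n}$ by Remark~\ref{remark:parameter_varsigma}. So the induction hypothesis applies and $L_{{\mathtt n},{\mathtt p}}(\gamma_j)$ is regular. For the factors $L_{{\mathtt n},{\mathtt p}}(\delta_k^{\omega+q_k})$: apply Lemma~\ref{lemma:Lnp}\ref{item:Ln-4} to get
$$L_{{\mathtt n},{\mathtt p}}(\delta_k^{\omega+q_k})=L_{{\mathtt n},{\mathtt p}}(\delta_k)^{{\mathtt n}+q_k}\bigl(L_{{\mathtt n},{\mathtt p}}(\delta_k)^{\mathtt p}\bigr)^*.$$
Each base $\delta_k$ is again a subterm of $\alpha$ in canonical form of rank $i$, with scale below ${\mathtt n}$, so by the induction hypothesis $L_{{\mathtt n},{\mathtt p}}(\delta_k)$ is regular; since regular languages are closed under finite powers, finite concatenation and the Kleene star, $L_{{\mathtt n},{\mathtt p}}(\delta_k^{\omega+q_k})$ is regular. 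One should check that ${\mathtt n}+q_k\geq 0$ so that the power $L_{{\mathtt n},{\mathtt p}}(\delta_k)^{{\mathtt n}+q_k}$ makes sense, which follows from ${\mathtt n}>\nu(\alpha)\geq|q_k|$. Combining the two cases, every factor in the decomposition is regular, hence so is $L_{{\mathtt n},{\mathtt p}}(\alpha)$.

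I do not expect any genuine obstacle here: the statement is a routine structural consequence of the explicit formulas in Lemma~\ref{lemma:Lnp} together with the closure properties of regular languages, and the only mild subtlety is bookkeeping — making sure the induction hypothesis is invoked at the right rank and scale, and that all the hypotheses (canonical form passing to subterms via Proposition~\ref{prop:canonicalform_vs_subterms}, scale monotonicity via Remark~\ref{remark:parameter_varsigma}, the bound ${\mathtt n}>\nu(\alpha)$) line up for the pieces $\gamma_j$ and $\delta_k$. In fact the argument shows slightly more, namely an explicit rational expression for $L_{{\mathtt n},{\mathtt p}}(\alpha)$ built by recursion on the parenthesis structure, which may be worth recording for later use. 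Notably, canonical form is not really needed for mere regularity — it is the \emph{star-freeness} refinement (not asked here) where the canonical form conditions \ref{item:cf-1}--\ref{item:cf-4} will do the real work — but keeping the hypothesis in the statement is harmless and convenient for the sequel.
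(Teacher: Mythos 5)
Your proof is correct and follows essentially the same route as the paper: induction on rank, the factorization of $L_{{\mathtt n},{\mathtt p}}(\alpha)$ from Lemma~\ref{lemma:Lnp}, and closure of regular languages under concatenation, powers and star. Your extra bookkeeping (subterms stay in canonical form, scale monotonicity, ${\mathtt n}+q_k\geq 0$) and the observation that canonical form is not actually needed for regularity are both sound, but do not change the argument.
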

\begin{proof} We proceed by induction on
$\rk{\alpha}$. For $\rk{\alpha}=0$ the result is clear since in this case $L_{{\mathtt n},{\mathtt
p}}(\alpha)=\{\alpha\}$. Let now $\rk {\alpha}=i+1$ with $i\geq 0$ and suppose, by the induction hypothesis, that
the lemma holds for $\bk$-terms of rank at most $i$. Let $\alpha=\gamma_0\delta_1^{\omega+p_1}\gamma_1\cdots
\delta_r^{\omega+p_r}\gamma_r$ be the canonical form expression for $\alpha$, where $\rk {\delta_k}=i$ and
$\rk{\gamma_j}\leq i$ for all $k$ and $j$. Then, by Lemma~\ref{lemma:Lnp},
$$L_{{\mathtt n},{\mathtt p}}(\alpha)= L_{{\mathtt n},{\mathtt p}}(\gamma_0)
L_{{\mathtt n},{\mathtt p}}(\delta_1)^{{\mathtt n}+q_1}(L_{{\mathtt n},{\mathtt p}}(\delta_1)^{{\mathtt
p}})^*L_{{\mathtt n},{\mathtt p}}(\gamma_1)\cdots L_{{\mathtt n},{\mathtt p}}(\delta_r)^{{\mathtt
n}+q_r}(L_{{\mathtt n},{\mathtt p}}(\delta_r)^{{\mathtt p}})^*L_{{\mathtt n},{\mathtt p}}(\gamma_r).$$ By the
induction hypothesis, each $L_{{\mathtt n},{\mathtt p}}(\gamma_j)$ and $L_{{\mathtt n},{\mathtt p}}(\delta_k)$
is a regular language, whence $L_{{\mathtt n},{\mathtt p}}(\alpha)$ is itself a regular language. This completes
the inductive step and concludes the proof of the result.
\end{proof}

For instance, the language $L_{8,4}(\beta)$ associated with the above $\bk$-term
$\beta=(a^{\omega-1}b)^{\omega}a^{\omega+1}$  admits the regular expression
$L_{8,4}(\beta)=(a^7(a^4)^*b)^8((a^7(a^4)^*b)^4)^*a^9(a^4)^*$. Notice that, in this example, ${\mathtt n}=8$ is a multiple of ${\mathtt p}=4$ and so the sequence $\bigl((a^{k!-1}b)^{k!}a^{k!+1}\bigr)_k$ of $A^+$ is ultimately contained in $L_{8,4}(\beta)$. Thus,
\begin{equation}\label{ex:etabeta_clL}
\eta(\beta)\in\mbox{cl}(L_{8,4}(\beta))
\end{equation}
where $\eta:T_A^{\bk}\rightarrow\omek{\bk}{A}{\Se}$ is the homomorphism of $\bk$-semigroups that sends each  $x\in A$ to itself and $\mbox{cl}(L_{8,4}(\beta))$ denotes the topological closure of the language $L_{8,4}(\beta)$ in $\OAS$.

\paragraph{\textbf{Schemes for canonical forms.}} We define the \emph{length} of a $\bk$-term $\alpha$ as the length
of the corresponding well-parenthesized word over $A_\Z$, and denote it $|\alpha|$. We now associate to each
$\bk$-term $\alpha$ a parameter $\mu(\alpha)$, introduced in~\cite{Almeida&Costa&Zeitoun:2012} for
$\omega$-terms. In case $\alpha\in A^+$, let
  $\mu(\alpha)=0$. Otherwise,~let
  $$\mu(\alpha)=2^{\rk \alpha}\max\{|\beta|:
  \beta\text{ is a crucial portion of $\alpha^2$}\}.$$
It is important to remark the following feature of this parameter,
whose proof is an easy adaptation
of~\cite[Lemma~3.5]{Almeida&Costa&Zeitoun:2012}.
\begin{lemma}\label{lemma:parameter_mu}
 If $\alpha'$ is an expansion of a $\bk$-term $\alpha$, then
$\mu(\alpha')\leq \mu(\alpha)$.
\end{lemma}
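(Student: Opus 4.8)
The plan is to read off the inequality directly from the definition of $\mu$ and the structure of an expansion. First dispose of the trivial cases: if $\alpha\in A^+$ then $\alpha'=\alpha$ and $\mu(\alpha')=0=\mu(\alpha)$, and if $\rk{\alpha}=1$ then $\alpha'$ is a word, so $\mu(\alpha')=0\le\mu(\alpha)$. So assume $\rk{\alpha}=i+1$ with $i\ge 1$ and write $\alpha$ as in~\eqref{eq:fact_rank_i+1_term}, with $n\ge 1$, $\rk{\delta_k}=i$ and $\rk{\gamma_j}\le i$, so that $\alpha'=\gamma_0\delta_1^{j_1}\gamma_1\cdots\delta_n^{j_n}\gamma_n$ for some integers $j_k\ge 1$ and $\rk{\alpha'}=i$. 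Setting $M=\max\{|\beta|:\beta\text{ a crucial portion of }\alpha^2\}$ and $M'=\max\{|\beta'|:\beta'\text{ a crucial portion of }(\alpha')^2\}$ (both maxima over nonempty finite sets), we have $\mu(\alpha)=2^{i+1}M$ and $\mu(\alpha')=2^iM'$, so it suffices to prove $M'\le 2M$.

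To that end I would show that every (rank $i$) crucial portion $\beta'$ of $(\alpha')^2$ has $|\beta'|\le 2M$. By definition $\beta'$ is the factor of $(\alpha')^2$ delimited inclusively by two consecutive rank $i$ limit terms $\lambda_1,\lambda_2$. Since $\rk{\delta_k}=i$ for every $k$, each copy of a $\delta_k$ occurring in $(\alpha')^2$ contains a rank $i$ limit term, hence no such copy lies strictly between $\lambda_1$ and $\lambda_2$; a short case analysis on the position of $\lambda_1,\lambda_2$ then shows that $\beta'$ is a factor of one of the words: (i)~a single $\delta_k$; (ii)~a single $\gamma_j$ with $\rk{\gamma_j}=i$; (iii)~$\delta_k\delta_k$, corresponding to two consecutive copies of $\delta_k$ inside a block $\delta_k^{j_k}$; (iv)~$\delta_k\gamma_k$ or $\gamma_{k-1}\delta_k$ with the respective $\gamma$ of rank $i$; (v)~$\delta_k\gamma_k\delta_{k+1}$ with $\rk{\gamma_k}<i$, together with the ``circular'' variants $\delta_n\gamma_n\gamma_0\delta_1$ and $\gamma_n\gamma_0$ coming from the junction of the two copies of $\alpha'$ in $(\alpha')^2$. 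In~(i), $|\beta'|\le|\delta_k|\le M-2$ since the limit term $\lp{q_k}\delta_k\rp{q_k}$ occurs inside some crucial portion of $\alpha^2$ (the circular portion when $k=n$); in~(ii), $|\beta'|\le|\gamma_j|<M$ since $\gamma_j$ occurs inside some crucial portion of $\alpha^2$ of length at least $|\gamma_j|+4$; in~(iv) and~(v) one bounds $\beta'$ by the crucial portion $\lp{q_k}\delta_k\rp{q_k}\gamma_k\lp{q_{k+1}}\delta_{k+1}\rp{q_{k+1}}$ of $\alpha^2$ (or by the circular portion of $\alpha^2$), which contains $\delta_k\gamma_k\delta_{k+1}$ (resp.\ $\delta_n\gamma_n\gamma_0\delta_1$) as a factor, so $|\beta'|<M$; and in~(iii) --- the only place the factor $2$ is needed --- one has $|\beta'|\le 2|\delta_k|\le 2(M-2)<2M$. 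In every case $|\beta'|\le 2M$, so $M'\le 2M$ and $\mu(\alpha')=2^iM'\le 2^{i+1}M=\mu(\alpha)$. A general iterated expansion is then handled by applying this argument once per rank.

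The argument is almost entirely bookkeeping, and the point that needs care is the exhaustiveness of the list of shapes of $\beta'$: one must account for the seams inside each block $\delta_k^{j_k}$ (which is precisely what forces the weight $2^{\rk{\alpha}}$, rather than a constant, in the definition of $\mu$) and for the junction $\gamma_n\gamma_0$, which occurs in $(\alpha')^2$ but not in $\alpha$, and then verify for each word $W$ of which $\beta'$ is a factor that $|W|\le 2M$. As the excerpt remarks, all of this is an easy adaptation of \cite[Lemma~3.5]{Almeida&Costa&Zeitoun:2012}.
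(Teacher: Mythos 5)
Your proof is correct: the classification of the possible shapes of a crucial portion of $(\alpha')^2$ (inside a single block, across two adjacent copies of a base $\delta_k$, or straddling a junction $\delta_k\gamma_k\delta_{k+1}$, $\delta_k\gamma_k$, $\gamma_{k-1}\delta_k$, or the seam $\gamma_n\gamma_0$) is exhaustive, each length bound against $M$ is valid, and the doubling in case~(iii) is exactly what the factor $2^{\rk\alpha}$ in the definition of $\mu$ is there to absorb. The paper omits this argument entirely, deferring to~\cite[Lemma~3.5]{Almeida&Costa&Zeitoun:2012}, and your write-up is precisely the expected adaptation; the only superfluous remark is the closing sentence about iterating over ranks, since an expansion in this paper's sense is by definition a single-level operation taking a rank $i+1$ term to a rank $i$ term.
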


 Let $\alpha$ be a $\bk$-term in canonical form and let $({\mathtt n},{\mathtt p})$ be a pair of integers. We say that $({\mathtt n},{\mathtt p})$
 is a \emph{scheme for} $\alpha$ if the following conditions hold:
\begin{itemize}
\item ${\mathtt n}$ is a multiple of ${\mathtt p}$ such that ${\mathtt n}-{\mathtt
p}>\mu(\alpha)$;
\item ${\mathtt p}>2\nu(\alpha)$.
\end{itemize}

The next result is an immediate consequence of
Proposition~\ref{prop:canonicalform_vs_subterms}, of
Remark~\ref{remark:parameter_varsigma} and of
Lemma~\ref{lemma:parameter_mu}.
\begin{lemma}\label{lemma:table-vs_subterm_and_expansion}
Let $\alpha$ be a $\bk$-term in canonical form and let $({\mathtt n},{\mathtt p})$ be a scheme for $\alpha$. If
$\alpha'$ is  an expansion of $\alpha$, then $\alpha'$ is in canonical form and $({\mathtt n},{\mathtt p})$ is a
scheme  for $\alpha'$.
\end{lemma}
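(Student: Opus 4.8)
The plan is to establish the two assertions in turn: the first needs a short structural argument, and the second falls out immediately from the monotonicity of the parameters $\nu$ and $\mu$.

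\emph{$\alpha'$ is in canonical form.} Write $\rk{\alpha}=i+1$ and $\alpha=\gamma_0\lp{q_1}\delta_1\rp{q_1}\gamma_1\cdots\lp{q_n}\delta_n\rp{q_n}\gamma_n$, so that $\alpha'=\gamma_0\delta_1^{j_1}\gamma_1\cdots\delta_n^{j_n}\gamma_n$ for some integers $j_k\geq 1$ and $\rk{\alpha'}=i$. By condition~\ref{item:cf-1} the $2$-expansion $\hat\alpha=\gamma_0\delta_1^2\gamma_1\cdots\delta_n^2\gamma_n$ of $\alpha$ is a rank~$i$ canonical form, and by Corollary~\ref{corol:circular_vs_canonical}\ref{item:circular_vs_canonical-c} each base $\delta_k$ is in circular canonical form. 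By Proposition~\ref{prop:canonicalform_vs_subterms} it is enough to show that the initial portion, the final portion and every crucial portion of $\alpha'$ is in canonical form, and for this I would verify that each of these is a factor, hence a subterm, of $\hat\alpha$. A portion lying inside a single block $\delta_k$ or $\gamma_j$ is a subterm of that block. A crucial portion straddling two consecutive copies of a base inside $\delta_k^{j_k}$ is exactly the circular portion of $\delta_k$, which is a crucial portion of $\delta_k^2$. Finally, a crucial portion straddling a block boundary, as well as the initial and the final portion of $\alpha'$, is a factor of a window consisting of at most three consecutive blocks of the factorisation of $\alpha'$, and every such window is a factor of $\hat\alpha$ (since $\hat\alpha$ contains $\delta_{k-1}\delta_k$, $\delta_k\gamma_k\delta_{k+1}$, $\gamma_0\delta_1$ and $\delta_n\gamma_n$ as factors). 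Applying Proposition~\ref{prop:canonicalform_vs_subterms} to the canonical form $\hat\alpha$ then gives that every such portion of $\alpha'$ is in canonical form, and a second application of that proposition (the implication from condition~\ref{item:canonicalform_vs_subterms-c} to~\ref{item:canonicalform_vs_subterms-a}) yields that $\alpha'$ itself is in canonical form.

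\emph{$({\mathtt n},{\mathtt p})$ is a scheme for $\alpha'$.} This is now immediate, and is where Remark~\ref{remark:parameter_varsigma} and Lemma~\ref{lemma:parameter_mu} come in. The requirement that ${\mathtt n}$ be a multiple of ${\mathtt p}$ concerns the pair alone and is untouched. Since $\alpha'$ is an expansion of $\alpha$, Remark~\ref{remark:parameter_varsigma} gives $\nu(\alpha')\leq\nu(\alpha)$ and Lemma~\ref{lemma:parameter_mu} gives $\mu(\alpha')\leq\mu(\alpha)$; hence ${\mathtt n}-{\mathtt p}>\mu(\alpha)\geq\mu(\alpha')$ and ${\mathtt p}>2\nu(\alpha)\geq 2\nu(\alpha')$, which are precisely the two remaining conditions for $({\mathtt n},{\mathtt p})$ to be a scheme for $\alpha'$ (the notion being applicable because, by the first part, $\alpha'$ is a canonical form).

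\emph{Expected obstacle.} There is essentially none; the content is concentrated in the first assertion. The only point requiring some care there is the treatment of crucial portions of $\alpha'$ that are not contained in a single block of the factorisation: one must identify each such portion concretely — as a circular portion of a base, or as a factor of a short window of consecutive blocks — in order to recognise it as a subterm of $\hat\alpha$, and it is here that condition~\ref{item:cf-1} and Corollary~\ref{corol:circular_vs_canonical}\ref{item:circular_vs_canonical-c} are used. No new combinatorics on words beyond what has already been set up is needed.
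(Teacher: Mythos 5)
Your proof is correct and takes exactly the route the paper intends: the paper states this lemma without proof as an ``immediate consequence'' of Proposition~\ref{prop:canonicalform_vs_subterms}, Remark~\ref{remark:parameter_varsigma} and Lemma~\ref{lemma:parameter_mu}, and your elaboration — identifying each initial, final and crucial portion of the expansion $\alpha'$ with a corresponding portion of the $2$-expansion $\hat\alpha$ (or with a crucial portion of some $\delta_k^2$, i.e.\ the circular portion of $\delta_k$) and then applying both directions of Proposition~\ref{prop:canonicalform_vs_subterms}, together with the monotonicity of $\nu$ and $\mu$ for the scheme conditions — is precisely how that ``immediate'' is meant to be unpacked. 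The only blemish is the parenthetical claim that $\hat\alpha$ contains $\delta_{k-1}\delta_k$ as a factor, which holds only when $\gamma_{k-1}$ is empty (and is in any case subsumed by the window $\delta_{k-1}\gamma_{k-1}\delta_k$), so nothing is affected.
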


 The following is a significant property of a scheme.
\begin{lemma}\label{lemma:eqnormal_forms_vs_eqexpansions_eqrank}
Let $\alpha$ and $\beta$ be canonical forms with $\rk{\alpha}=\rk{\beta}$ and let $({\mathtt n},{\mathtt p})$ be
a scheme for both $\alpha$ and $\beta$. If $L_{{\mathtt n},{\mathtt p}}(\alpha)\cap L_{{\mathtt n},{\mathtt
p}}(\beta)\neq\emptyset$, then $\alpha=\beta$.
\end{lemma}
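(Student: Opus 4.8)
The plan is to prove by induction on $\rk{\alpha}=\rk{\beta}$ that the canonical form is uniquely determined by the nonempty intersection of the associated languages. The base case $\rk{\alpha}=\rk{\beta}=0$ is immediate: here $L_{{\mathtt n},{\mathtt p}}(\alpha)=\{\alpha\}$ and $L_{{\mathtt n},{\mathtt p}}(\beta)=\{\beta\}$, so a common element forces $\alpha=\beta$. For the inductive step, write $\alpha=\gamma_0\lp{p_1}\delta_1\rp{p_1}\gamma_1\cdots\lp{p_m}\delta_m\rp{p_m}\gamma_m$ and $\beta=\gamma'_0\lp{p'_1}\delta'_1\rp{p'_1}\gamma'_1\cdots\lp{p'_n}\delta'_n\rp{p'_n}\gamma'_n$ in canonical form, with all $\delta_k,\delta'_\ell$ of rank $i$ and all $\gamma_j,\gamma'_j$ of rank $\leq i$. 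A word $w$ in the intersection is, by Lemma~\ref{lemma:Lnp}\ref{item:Ln-4}, of the form obtained by replacing each $\lp{p_k}\delta_k\rp{p_k}$ by a power $\delta_k^{t_k}$ with $t_k\in\overline{p_k}$ (so $t_k\geq{\mathtt n}+p_k>{\mathtt n}-\nu(\alpha)>\mu(\alpha)$, in particular $t_k\geq 2$) and then picking an element of $L_{{\mathtt n},{\mathtt p}}$ of the resulting rank $i$ expansion; and symmetrically for $\beta$.

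The core of the argument is a \emph{synchronization lemma} comparing the two expansions as rank $i$ canonical forms (by Lemma~\ref{lemma:table-vs_subterm_and_expansion}, both expansions are canonical forms for which $({\mathtt n},{\mathtt p})$ is still a scheme). First I would handle the degenerate cases $m=0$ or $n=0$: if $\alpha\in A^*$ then the common word $w$ is an actual word of $A^*$, which (using ${\mathtt p}>2\nu$ and ${\mathtt n}-{\mathtt p}>\mu(\beta)$, so the chosen exponents are large) forces $\beta$ to have no limit terms either and then $\alpha=\beta$ as words. Assuming $m,n\geq 1$, the plan is to read $w$ from left to right and match up the "blocks." The rank $i$ expansion of $\alpha$ looks like $\gamma_0\,\delta_1^{t_1}\,\gamma_1\cdots\delta_m^{t_m}\,\gamma_m$; because the $\delta_k$ are Lyndon terms of rank $i$ (condition~\ref{item:cf-2}) and because conditions~\ref{item:cf-3}--\ref{item:cf-4} say no $\delta_k$ is a suffix of $\gamma_{k-1}$ nor a prefix of $\gamma_k\delta_{k+1}^\ell$, the first "long run of a single Lyndon term" appearing in $w$ is unambiguously located and its Lyndon period is $\delta_1$; likewise for $\beta$ it is $\delta'_1$. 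Since the exponents $t_1,t'_1$ are both at least, say, $\mu(\alpha)+\mu(\beta)$ (hence the overlap is far longer than $|\delta_1|+|\delta'_1|$), Fine and Wilf's Theorem (Proposition~\ref{prop:fine_wilf}) forces $\delta_1$ and $\delta'_1$ to be powers of a common word, and primitivity makes them equal; being minimal in their conjugacy class pins down $\delta_1=\delta'_1$ as the very same Lyndon term, and moreover the runs start at the same position, so $\gamma_0=\gamma'_0$. Crucially one also concludes $t_1=t'_1$ from the congruence-mod-${\mathtt p}$ constraint together with ${\mathtt p}>2\nu$: the two admissible exponent sets $\overline{p_1}$ and $\overline{p'_1}$ are arithmetic progressions of common difference ${\mathtt p}$, and since $t_1\equiv t'_1$ and $|p_1-p'_1|\le 2\nu(\alpha)+2\nu(\beta) < 2{\mathtt p}$ cannot be a multiple of ${\mathtt p}$ unless it is $0$, we get $p_1=p'_1$. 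After peeling off $\gamma_0\lp{p_1}\delta_1\rp{p_1}$ from both sides, induct on the lt-length to conclude $m=n$ and $\gamma_k=\gamma'_k$, $\delta_k=\delta'_k$, $p_k=p'_k$ for all $k$; at the innermost level (rank $i$) the inductive hypothesis on rank finishes the identification of the $\gamma_j$ and $\delta_k$ as terms.

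The main obstacle, I expect, is the bookkeeping in the synchronization step — precisely showing that the conditions~\ref{item:cf-2}--\ref{item:cf-4} in the canonical form definition suffice to locate the boundaries $\gamma_{k-1}\mid\delta_k^{t_k}\mid\gamma_k$ unambiguously inside $w$, so that one never "mis-aligns" a tail of $\gamma_{k-1}$ with a head of $\delta_k$ or a tail of $\delta_k$ with a head of $\gamma_k$. This is exactly where the asymmetry in our crucial-portion condition (letting each limit term absorb all adjacent copies of its base, even overlapping on the right) pays off: it guarantees that between two consecutive Lyndon-runs the "glue" $\gamma_k$ contains no further full copy of $\delta_k$ at its start nor of $\delta_{k+1}$ at its end, which is what makes the leftmost long run canonically identifiable. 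Once this combinatorial separation is established, Fine–Wilf and the modular argument for exponents are routine, and the induction on rank (for the subterms $\gamma_j,\delta_k$, which are themselves canonical forms by Proposition~\ref{prop:canonicalform_vs_subterms}, with the same scheme by Remark~\ref{remark:parameter_varsigma}) closes the proof.
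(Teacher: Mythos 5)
There is a genuine gap, and it sits exactly where you flag ``the main obstacle'': you attempt the synchronization of blocks directly inside the word $w\in A^+$. For $\rk{\alpha}=\rk{\beta}=i+1$ with $i\geq 1$, the bases $\delta_k$ are rank~$i$ \emph{terms}, not words, so the portion of $w$ contributed by $\delta_1^{t_1}$ is an element of $L_{{\mathtt n},{\mathtt p}}(\delta_1)^{t_1}$ --- a product of $t_1$ possibly distinct words --- and is not a periodic run of period $|\delta_1|$. Consequently ``the first long run of a single Lyndon term appearing in $w$'' is not a well-defined object, Fine and Wilf's Theorem cannot be applied inside $w$ to compare $\delta_1$ with $\delta'_1$, and conditions~\ref{item:cf-3}--\ref{item:cf-4}, which constrain terms over $A_\Z$, do not by themselves locate boundaries inside $w$. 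Your sketch is sound essentially only in the case $i=0$, and you do not supply the combinatorics that would be needed to push it to higher rank.

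The missing idea is the one that lets the paper sidestep all of this: after extracting expansions $\alpha'\in E_{{\mathtt n},{\mathtt p}}(\alpha)$ and $\beta'\in E_{{\mathtt n},{\mathtt p}}(\beta)$ with $w\in L_{{\mathtt n},{\mathtt p}}(\alpha')\cap L_{{\mathtt n},{\mathtt p}}(\beta')$, apply the induction hypothesis to the \emph{whole} rank~$i$ canonical forms $\alpha'$ and $\beta'$ (legitimate by Lemma~\ref{lemma:table-vs_subterm_and_expansion}) to conclude $\alpha'=\beta'$ as words over $A_\Z$. The synchronization --- Fine--Wilf on the overlapping factors $\delta_1^{m_1}$ and $\rho_1^{n_1}$, the Lyndon/unbordered arguments giving $\delta_1=\rho_1$ and $\gamma_0=\pi_0$, condition~\ref{item:cf-4} forcing $m_1=n_1$, and the congruence mod ${\mathtt p}$ giving $p_1=q_1$ --- is then carried out on this single identity of words over $A_\Z$, where it is elementary; your closing step of re-applying the rank induction to individual primary subterms becomes unnecessary. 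Two smaller points: your modular estimate should read $|p_1-p'_1|\leq\nu(\alpha)+\nu(\beta)<{\mathtt p}$ (the bound $<2{\mathtt p}$ you state would not exclude a difference equal to ${\mathtt p}$), and the ``degenerate case $\alpha\in A^*$'' cannot occur in the inductive step since $\rk{\alpha}=\rk{\beta}\geq 1$ forces $m,n\geq 1$.
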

\begin{proof} The proof is made by induction on the rank of  $\alpha$ (and $\beta$). The case $\rk{\alpha}=0$ is
trivial. Indeed, in this case we have $L_{{\mathtt n},{\mathtt
p}}(\alpha)=\{\alpha\}$ and $L_{{\mathtt n},{\mathtt
p}}(\beta)=\{\beta\}$.

Let now $\rk{\alpha}=i+1$ with $i\geq 0$ and suppose, by induction hypothesis, that the lemma holds for rank $i$
canonical forms. Let $\alpha=\gamma_0\delta_1^{\omega+p_1}\gamma_1\cdots \delta_r^{\omega+p_r}\gamma_r$ and
$\beta=\pi_0\rho_1^{\omega+q_1}\pi_1\cdots \rho_s^{\omega+q_s}\pi_s$ be the canonical form expressions for
$\alpha$ and $\beta$  and suppose that $w\in L_{{\mathtt n},{\mathtt p}}(\alpha)\cap L_{{\mathtt n},{\mathtt
p}}(\beta)$. By Lemma~\ref{lemma:Lnp}~\ref{item:Ln-1b} there exist expansions $\alpha'\in E_{{\mathtt
n},{\mathtt p}}(\alpha)$ of $\alpha$ and $\beta'\in E_{{\mathtt n},{\mathtt p}}(\beta)$ of $\beta$ such that
$w\in L_{{\mathtt n},{\mathtt p}}(\alpha')\cap L_{{\mathtt n},{\mathtt p}}(\beta')$. In particular, $\alpha'$
and $\beta'$ are rank $i$ canonical forms of the type $\alpha'=\gamma_0\delta_1^{m_1}\gamma_1\cdots
\delta_r^{m_r}\gamma_r$, with each $m_\ell\in \overline{p_\ell}=\{{\mathtt n}+j{\mathtt p}+p_\ell:j\geq 0\}$,
say $m_\ell={\mathtt n}+j_\ell{\mathtt p}+p_\ell$ with $j_\ell\geq 0$, and $\beta'=\pi_0\rho_1^{n_1}\pi_1\cdots
\rho_s^{n_s}\pi_s$, where each $n_\ell={\mathtt n}+k_\ell{\mathtt p}+q_\ell$ with $k_\ell\geq 0$. Moreover, by
Lemma~\ref{lemma:table-vs_subterm_and_expansion}, $({\mathtt n},{\mathtt p})$ is a scheme  for both $\alpha'$
and $\beta'$. Hence, the induction hypothesis entails the equality of the $\bk$-terms $\alpha'$ and $\beta'$,
that is,
\begin{equation}\label{eq:varepsilon=embeta}
\gamma_0\delta_1^{m_1}\gamma_1\cdots \delta_r^{m_r}\gamma_r=\pi_0\rho_1^{n_1}\pi_1\cdots \rho_s^{n_s}\pi_s.
\end{equation}

As $({\mathtt n},{\mathtt p})$ is a scheme for $\alpha$ and $\beta$, $m_\ell\geq {\mathtt n}+p_\ell\geq {\mathtt
n}-\nu(\alpha)>{\mathtt n}-{\mathtt p}$ and, analogously, $n_\ell>{\mathtt n}-{\mathtt p}$ for every
$\ell$. On the other hand ${\mathtt n}-{\mathtt p}>\max\{\mu(\alpha),\mu(\beta)\}$. Thus, in particular, $m_1$
and $n_1$ are both greater than $\max\{|\delta_r^{\omega+ p_r}\gamma_r\gamma_0\delta_1^{\omega
+p_1}|,|\rho_s^{\omega+ q_s}\pi_s\pi_0\rho_1^{\omega +q_1}|\}$.
  Hence, the terms $\delta_1^{m_1}$ and
$\rho_1^{n_1}$, occurring on the opposite sides of  equality~\eqref{eq:varepsilon=embeta}, must overlap on a
factor of length at least $|\delta_1|+|\rho_1|$. Therefore, by Fine and Wilf's Theorem, $\delta_1$ and $\rho_1$
have conjugates that are powers of the same $\bk$-term, say $\sigma$.  Since $\delta_1$ and $\rho_1$ are Lyndon terms by
condition~\ref{item:cf-2} of the $\bk$-term canonical form definition, it follows that $\delta_1=\sigma=\rho_1$.
Suppose, without loss of generality, that $|\gamma_0|\geq |\pi_0|$ and recall that any Lyndon term is
unbordered. Then, as $\gamma_0\delta_1$ is a prefix of $\beta'$, $\gamma_0\delta_1=\pi_0\delta_1^j$ for some
$j\geq 1$. Hence $\gamma_0=\pi_0$ since $\alpha$ is in canonical form and condition~\ref{item:cf-3} of the
$\bk$-term canonical form definition states that $\delta_1$ is not a suffix of $\gamma_0$. On the other hand,
the canonical forms $\alpha$ and $\beta$ verify condition~\ref{item:cf-4}. Thus, $\delta_1$ is not a prefix of
$\gamma_1\delta_2^{m_2}$ and $\rho_1$ is not a prefix of $\pi_1\rho_2^{n_2}$. Consequently, the equalities
$\alpha'=\beta'$, $\gamma_0=\pi_0$ and $\delta_1=\rho_1$ and the fact that both $m_2$ and $n_2$ are greater than
$\max\{\mu(\alpha),\mu(\beta)\}$ (and so greater than $|\delta_1|$) imply that $m_1=n_1$. As ${\mathtt n}$ is a
multiple of ${\mathtt p}$ by the definition of a scheme, the positive integers $m_1$ and $n_1$ are congruent mod
${\mathtt p}$ with $p_1$ and $q_1$ respectively. Therefore $p_1=q_1$ once ${\mathtt
p}>2\max\{\nu(\alpha),\nu(\beta)\}$. Iterating the above procedure, one deduces that, for every
$1\leq \ell\leq\mbox{min}\{r,s\}$, $\gamma_{\ell-1}=\pi_{\ell-1}$, $\delta_{\ell}=\rho_{\ell}$,
$p_{\ell}=q_{\ell}$ and
$$\gamma_{\ell-1}\delta_{\ell}^{m_\ell}\gamma_{\ell}\cdots
\delta_r^{m_r}\gamma_r=\pi_{\ell-1}\rho_\ell^{n_\ell}\pi_\ell\cdots \rho_s^{n_s}\pi_s.$$ By symmetry, we have
further that $\gamma_r=\pi_s$. Now, since each $m_k$ and each $n_\ell$ is greater than
$\max\{\mu(\alpha),\mu(\beta)\}$, it is now straightforward to deduce that $r=s$. This shows that
$\alpha=\beta$ and concludes the inductive step of the proof.
\end{proof}

The following result is an extension of~\cite[Theorem 5.3]{Almeida&Costa&Zeitoun:2012} and it will be essential
to prove Theorem~\ref{theo:canonical_form}.
\begin{proposition}\label{prop:eq_normal_forms_vs_eq_expansions}
Let $\alpha$ and $\beta$ be canonical forms  and let $({\mathtt n},{\mathtt p})$ be a scheme for both $\alpha$
and $\beta$ such that ${\mathtt n}-{\mathtt p}> \max\{|\alpha|,|\beta|\}$. If $L_{{\mathtt n},{\mathtt
p}}(\alpha)\cap L_{{\mathtt n},{\mathtt p}}(\beta)\neq\emptyset$, then $\alpha=\beta$.
\end{proposition}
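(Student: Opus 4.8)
The plan is to reduce Proposition~\ref{prop:eq_normal_forms_vs_eq_expansions} to the equal-rank case already handled in Lemma~\ref{lemma:eqnormal_forms_vs_eqexpansions_eqrank}. Since that lemma requires $\rk{\alpha}=\rk{\beta}$, the whole difficulty is to show that, under the hypotheses here, the two canonical forms must in fact have the same rank; once that is known, the lemma applies verbatim and gives $\alpha=\beta$.

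First I would argue by induction on $\rk{\alpha}+\rk{\beta}$ (or on $\max\{\rk{\alpha},\rk{\beta}\}$), assuming without loss of generality that $\rk{\alpha}\leq\rk{\beta}$, and aiming for a contradiction from $\rk{\alpha}<\rk{\beta}$. Pick $w\in L_{{\mathtt n},{\mathtt p}}(\alpha)\cap L_{{\mathtt n},{\mathtt p}}(\beta)$. Writing $\beta=\pi_0\rho_1^{\omega+q_1}\pi_1\cdots\rho_s^{\omega+q_s}\pi_s$ in canonical form with each $\rk{\rho_k}=\rk{\beta}-1$, membership of $w$ in $L_{{\mathtt n},{\mathtt p}}(\beta)$ forces, via Lemma~\ref{lemma:Lnp}\ref{item:Ln-3} and~\ref{item:Ln-4}, that $w$ contains, as a factor, at least ${\mathtt n}+q_1$ consecutive copies of a word from $L_{{\mathtt n},{\mathtt p}}(\rho_1)$. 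In particular $w$ contains a factor that is a high power $u^{{\mathtt n}-{\mathtt p}}$ of some nonempty word $u\in L_{{\mathtt n},{\mathtt p}}(\rho_1)$ (here $u$ need not be primitive, but that is harmless). On the other hand, $w\in L_{{\mathtt n},{\mathtt p}}(\alpha)$ and $\alpha$ has strictly smaller rank, so $w$ is a substitution instance of $\alpha$ in which every $\omega$-power of $\alpha$ — all of which are of rank at most $\rk{\alpha}-1\leq\rk{\beta}-2$ — is replaced by a bounded power; more precisely, by Lemma~\ref{lemma:Lnp} the word $w$ decomposes as a concatenation of at most $|\alpha|$ blocks, each of which is either a single letter or a power $v^m$ with $v\in L_{{\mathtt n},{\mathtt p}}(\gamma)$ for a primary subterm $\gamma$ of $\alpha$ of rank $\leq\rk{\alpha}-1$, and $m< {\mathtt n}+{\mathtt p}+\nu(\alpha)$ — but more to the point, $v$ itself is an element of a language $L_{{\mathtt n},{\mathtt p}}$ of rank $\leq\rk{\alpha}-1$.

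The key tension to exploit is a length argument. The factor $u^{{\mathtt n}-{\mathtt p}}$ of $w$ coming from $\rho_1^{\omega+q_1}$ has length at least $({\mathtt n}-{\mathtt p})|u|$, which exceeds $({\mathtt n}-{\mathtt p})$; since ${\mathtt n}-{\mathtt p}>\max\{|\alpha|,|\beta|\}\geq|\alpha|$, this single factor is longer than the number of blocks in the decomposition of $w$ induced by $\alpha$. Consequently, reading $u^{{\mathtt n}-{\mathtt p}}$ inside the $\alpha$-decomposition, some block of the $\alpha$-side — say a power $v^m$ of a subterm of rank $\leq\rk{\alpha}-1<\rk{\beta}-1$ — must cover a long stretch, of length at least $2|u|$ say, of $u^{{\mathtt n}-{\mathtt p}}$, so that by Fine and Wilf's Theorem $u$ is a power of a conjugate of $v$, or $v$ is a power of a conjugate of $u$; in either case, combining with the fact that both $\rho_1$ (inside $L_{{\mathtt n},{\mathtt p}}(\rho_1)$) and $v$ come from $L_{{\mathtt n},{\mathtt p}}$-languages of controlled rank, one derives a rank clash: the base $\rho_1$ of $\beta$, of rank $\rk{\beta}-1$, would have to match, up to the combinatorics of $L_{{\mathtt n},{\mathtt p}}$, a subterm of $\alpha$ of strictly smaller rank, which is impossible because elements of $L_{{\mathtt n},{\mathtt p}}(\rho_1)$ are ``long'' in a way that blocks them from being powers of shorter periodic words arising from lower rank. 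Making this last step precise is the main obstacle: one must verify that a word in $L_{{\mathtt n},{\mathtt p}}$ coming from a rank-$j$ canonical form cannot be a power of (a conjugate of) a word coming from a rank-$j'$ canonical form with $j'<j$, which I would establish by a separate easy induction on $j$, using that the rank-$j$ word contains an $\omega$-unfolded block of rank $j-1$ repeated $\geq{\mathtt n}-{\mathtt p}> |\beta|$ times and hence is not ``periodic of small period'', while the rank-$j'$ word is a concatenation of few ($\leq|\alpha|$) blocks of lower rank.

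Finally, once the rank clash rules out $\rk{\alpha}<\rk{\beta}$, we have $\rk{\alpha}=\rk{\beta}$, and since $({\mathtt n},{\mathtt p})$ is by hypothesis a scheme for both $\alpha$ and $\beta$ and $L_{{\mathtt n},{\mathtt p}}(\alpha)\cap L_{{\mathtt n},{\mathtt p}}(\beta)\neq\emptyset$, Lemma~\ref{lemma:eqnormal_forms_vs_eqexpansions_eqrank} yields $\alpha=\beta$ directly. (An alternative, perhaps cleaner, organization: strengthen Lemma~\ref{lemma:eqnormal_forms_vs_eqexpansions_eqrank}'s proof so that the Fine–Wilf / peeling argument there is run without assuming equal rank from the outset, the condition ${\mathtt n}-{\mathtt p}>\max\{|\alpha|,|\beta|\}$ being exactly what forces the outermost bases on the two sides to line up in rank before one peels off $\gamma_0=\pi_0$, $\delta_1=\rho_1$, and so on; the length bound guarantees the needed overlaps survive at every peeling stage. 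I expect this is how the author proceeds.)
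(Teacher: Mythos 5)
Your reduction to the equal-rank case is the right skeleton, but the way you try to rule out $\rk{\alpha}<\rk{\beta}$ has a genuine hole, which you yourself flag: the ``rank clash'' claim (that a word of $L_{{\mathtt n},{\mathtt p}}(\rho_1)$ with $\rk{\rho_1}=\rk{\beta}-1$ cannot be forced by Fine--Wilf to share a period with a word coming from a lower-rank subterm of $\alpha$) is never proved, and it is not an ``easy induction'' --- it is essentially the combinatorial core of the whole problem, comparable in difficulty to Lemma~\ref{lemma:eqnormal_forms_vs_eqexpansions_eqrank} itself. There is also a concrete error en route: the exponents occurring in $E_{{\mathtt n},{\mathtt p}}(\alpha)$ range over $\overline{q}=\{{\mathtt n}+j{\mathtt p}+q:j\ge 0\}$ and are therefore \emph{unbounded}, so your assertion that each block $v^m$ on the $\alpha$-side has $m<{\mathtt n}+{\mathtt p}+\nu(\alpha)$ is false; a single block can swallow the entire factor $u^{{\mathtt n}-{\mathtt p}}$, and the count ``one long factor versus at most $|\alpha|$ blocks'' does not by itself localize the overlap you need.

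The paper's proof sidesteps all of this with a short indirect argument that you should compare with yours. Assuming $\rk{\alpha}>\rk{\beta}=i$ and $j=\rk{\alpha}-i$, one applies Lemma~\ref{lemma:Lnp}~\ref{item:Ln-1b} $j$ times to find $\alpha'\in E_{{\mathtt n},{\mathtt p}}^{j}(\alpha)$ with $w\in L_{{\mathtt n},{\mathtt p}}(\alpha')$; by Lemma~\ref{lemma:table-vs_subterm_and_expansion}, $\alpha'$ is a canonical form of rank $i$ for which $({\mathtt n},{\mathtt p})$ is still a scheme, so Lemma~\ref{lemma:eqnormal_forms_vs_eqexpansions_eqrank} applies to $\alpha'$ and $\beta$ and yields the equality of \emph{terms} $\alpha'=\beta$. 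This is absurd purely for length reasons: every exponent introduced by $E_{{\mathtt n},{\mathtt p}}$ is at least ${\mathtt n}-\nu(\alpha)>{\mathtt n}-{\mathtt p}$, so $|\alpha'|>{\mathtt n}-{\mathtt p}>|\beta|$. This is precisely where the hypothesis ${\mathtt n}-{\mathtt p}>\max\{|\alpha|,|\beta|\}$ enters, and it is used on the \emph{terms} $\alpha'$ and $\beta$, not inside the word $w$; your proposal never exploits it this way. If you want to salvage your draft, replace the entire Fine--Wilf/rank-clash paragraph by this expansion-plus-length contradiction.
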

\begin{proof} Let $w\in L_{{\mathtt n},{\mathtt p}}(\alpha)\cap L_{{\mathtt n},{\mathtt
p}}(\beta)$.  Suppose that $\rk{\alpha}> \rk{\beta}=i$ and let $j=\rk{\alpha}-i$. Hence, by the hypothesis $w\in
L_{{\mathtt n},{\mathtt p}}(\alpha)$ and Lemma~\ref{lemma:Lnp}~\ref{item:Ln-1b}, there is $\alpha'\in
E_{{\mathtt n},{\mathtt p}}^j(\alpha)$ such that $w\in L_{{\mathtt n},{\mathtt p}}(\alpha')$. Moreover,
$\alpha'$ is a rank $i$ canonical form and $({\mathtt n},{\mathtt p})$ is a scheme  for
 $\alpha'$. Therefore, by Lemma~\ref{lemma:eqnormal_forms_vs_eqexpansions_eqrank},
$\alpha'=\beta$. This is however impossible since $|\beta|<{\mathtt n}-{\mathtt p}$, by hypothesis, and
${\mathtt n}-{\mathtt p}<|\alpha'|$, by the fact that $\alpha'\in E_{{\mathtt n},{\mathtt p}}^j(\alpha)$ and
${\mathtt n}-{\mathtt p}<{\mathtt n}-\nu(\alpha)\leq {\mathtt n}+q=\min \overline{q}$ for every $q\in
Q(\alpha)$. By symmetry it follows that $\rk{\alpha}= \rk{\beta}$ and so, by
Lemma~\ref{lemma:eqnormal_forms_vs_eqexpansions_eqrank}, $\alpha=\beta$.
\end{proof}
\section{Main results}\label{section:main results}
 For $L\subseteq A^+$, let $\mbox{cl}(L)$ be the topological
closure of $L$ in $\OAS$ and notice that $\mbox{cl}(L)\cap A^+=L$ since any sequence of words converging  to a
word  $w\in A^+$ is ultimately equal to $w$.  We can now complete the proof of our central result.
\begin{theorem}\label{theo:canonical_form}
Let $\alpha$ and $\beta$ be $\bk$-terms in canonical form. If $\Se\models\alpha=\beta$, then $\alpha$ and
$\beta$ are the same $\bar\kappa$-term.
\end{theorem}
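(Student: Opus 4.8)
The plan is to leverage the machinery built in Section~\ref{section:expansions}, in particular Proposition~\ref{prop:eq_normal_forms_vs_eq_expansions}, together with the topological characterization via closures of the languages $L_{{\mathtt n},{\mathtt p}}$. The guiding principle is that a canonical form $\alpha$ should be recoverable from the closure of any sufficiently fine associated language, because such a language contains (ultimately) a sequence of finite words of the form obtained by substituting large factorials into the $\omega$-powers, and that sequence converges in $\OAS$ to $\eta(\alpha)$. This was illustrated in~\eqref{ex:etabeta_clL}. So the first step is to fix a pair $({\mathtt n},{\mathtt p})$ that is simultaneously a scheme for both $\alpha$ and $\beta$ and additionally satisfies ${\mathtt n}-{\mathtt p}>\max\{|\alpha|,|\beta|\}$; this is clearly possible since the conditions defining a scheme only impose lower bounds on ${\mathtt n}$ and ${\mathtt p}$ (with ${\mathtt n}$ a multiple of ${\mathtt p}$), and one may take ${\mathtt n}$ as large as needed.

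Next I would establish the key bridge between the hypothesis $\Se\models\alpha=\beta$ and the languages: namely that $\eta(\alpha)\in\mathrm{cl}(L_{{\mathtt n},{\mathtt p}}(\alpha))$, and similarly for $\beta$. The point is that when ${\mathtt n}$ is a multiple of ${\mathtt p}$, the sequence obtained by replacing, recursively, each subterm $\delta^{\omega+q}$ by $\delta^{k!+q}$ (for $k$ large) lies ultimately in $L_{{\mathtt n},{\mathtt p}}(\alpha)$ — because $k!+q\equiv q\equiv {\mathtt n}+q\pmod{{\mathtt p}}$ once ${\mathtt p}\mid {\mathtt n}$ and ${\mathtt p}\mid k!$, and $k!+q\geq {\mathtt n}+q$ for $k$ large — and this sequence converges to $\eta(\alpha)$ in $\OAS$ by definition of the $(\omega+q)$-power as a limit of such powers. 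Hence $\eta(\alpha)$ and $\eta(\beta)$ both lie in $\mathrm{cl}(L_{{\mathtt n},{\mathtt p}}(\alpha))$ and $\mathrm{cl}(L_{{\mathtt n},{\mathtt p}}(\beta))$ respectively; and since $\Se\models\alpha=\beta$ means precisely $\eta(\alpha)=\eta(\beta)$ in $\omek{\bk}{A}{\Se}$, which sits inside $\OAS$, we get a common point $x=\eta(\alpha)=\eta(\beta)\in\mathrm{cl}(L_{{\mathtt n},{\mathtt p}}(\alpha))\cap\mathrm{cl}(L_{{\mathtt n},{\mathtt p}}(\beta))$.

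From there I would intersect with $A^+$: since the languages $L_{{\mathtt n},{\mathtt p}}(\alpha)$ and $L_{{\mathtt n},{\mathtt p}}(\beta)$ are regular (Proposition~\ref{prop:Lnp_regular}), their closures in $\OAS$ meet $A^+$ exactly in the languages themselves, and more importantly the closure of a regular language is determined by the language in a way that lets us conclude $\mathrm{cl}(L_{{\mathtt n},{\mathtt p}}(\alpha))\cap\mathrm{cl}(L_{{\mathtt n},{\mathtt p}}(\beta))\neq\emptyset$ forces $L_{{\mathtt n},{\mathtt p}}(\alpha)\cap L_{{\mathtt n},{\mathtt p}}(\beta)\neq\emptyset$. (The cleanest route is: a point in the intersection of the two closures is a limit of words from each language; since both languages are regular, by a standard profinite argument — two regular languages whose closures intersect must already intersect, because $\OAS$ separates disjoint regular languages — we obtain an actual word $w\in L_{{\mathtt n},{\mathtt p}}(\alpha)\cap L_{{\mathtt n},{\mathtt p}}(\beta)$.) Then Proposition~\ref{prop:eq_normal_forms_vs_eq_expansions} applies directly and yields $\alpha=\beta$ as $\bk$-terms, which is the conclusion.

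The main obstacle I anticipate is the topological step: justifying rigorously that disjoint regular languages have disjoint closures in $\OAS$ (equivalently, that $\OAS$ with its profinite topology separates them), and that the convergent sequences witnessing $\eta(\alpha)$ really do land inside $L_{{\mathtt n},{\mathtt p}}(\alpha)$ eventually — this requires being careful about the recursive (nested) substitution, matching the congruence and threshold conditions at every rank simultaneously, which is where the hypothesis that ${\mathtt p}\mid{\mathtt n}$ in the definition of a scheme, and that ${\mathtt n}$ can be taken arbitrarily large, both get used. Everything else is bookkeeping: combining the scheme conditions, invoking Proposition~\ref{prop:Lnp_regular} for regularity, and then feeding a concrete word $w$ into Proposition~\ref{prop:eq_normal_forms_vs_eq_expansions}.
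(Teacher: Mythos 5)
Your proposal is correct and follows essentially the same route as the paper's proof: fix a scheme $({\mathtt n},{\mathtt p})$ with ${\mathtt n}-{\mathtt p}>\max\{|\alpha|,|\beta|\}$, show $\eta(\alpha)=\eta(\beta)$ lies in both closures $\mbox{cl}(L_{{\mathtt n},{\mathtt p}}(\alpha))$ and $\mbox{cl}(L_{{\mathtt n},{\mathtt p}}(\beta))$, use regularity (Proposition~\ref{prop:Lnp_regular}) to extract a common finite word, and conclude via Proposition~\ref{prop:eq_normal_forms_vs_eq_expansions}. The ``standard profinite argument'' you flag as the main obstacle is handled in the paper exactly as you suggest: closures of regular languages are clopen, so a nonempty intersection of closures is open and must meet the dense discrete set $A^+$, where it equals $L_{{\mathtt n},{\mathtt p}}(\alpha)\cap L_{{\mathtt n},{\mathtt p}}(\beta)$.
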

\begin{proof} We adapt the corresponding proof for McCammond's normal forms, given in~\cite[Corollary~5.4]{Almeida&Costa&Zeitoun:2012}. Let $({\mathtt n},{\mathtt
p})$ be a scheme for both $\alpha$ and $\beta$, with ${\mathtt n}-{\mathtt p}> \max\{|\alpha|,|\beta|\}$. The
languages $L_{{\mathtt n},{\mathtt p}}(\alpha)$ and $L_{{\mathtt n},{\mathtt p}}(\beta)$ are regular by
Proposition~\ref{prop:Lnp_regular}, whence $\mbox{cl}(L_{{\mathtt n},{\mathtt p}}(\alpha))$ and
$\mbox{cl}(L_{{\mathtt n},{\mathtt p}}(\beta))$ are clopen subsets of $\OAS$. On the other hand, since ${\mathtt n}$ is a
multiple of ${\mathtt p}$ by the definition of a scheme, and as exemplified in~\eqref{ex:etabeta_clL} ,
$\eta(\alpha)\in\mbox{cl}(L_{{\mathtt n},{\mathtt p}}(\alpha))$ and $\eta(\beta)\in\mbox{cl}(L_{{\mathtt
n},{\mathtt p}}(\beta))$. As $\eta(\alpha)=\eta(\beta)$ by hypothesis, it follows that
$\mbox{cl}(L_{{\mathtt n},{\mathtt p}}(\alpha))\cap\mbox{cl}(L_{{\mathtt n},{\mathtt p}}(\beta))$ is a nonempty
open set and so it contains some elements of the dense set~$A^+$. Since $\mbox{cl}(L_{{\mathtt n},{\mathtt
p}}(\alpha))\cap\mbox{cl}(L_{{\mathtt n},{\mathtt p}}(\beta))\cap A^+=L_{{\mathtt n},{\mathtt p}}(\alpha)\cap
L_{{\mathtt n},{\mathtt p}}(\beta)$, we deduce that $L_{{\mathtt n},{\mathtt p}}(\alpha)\cap L_{{\mathtt
n},{\mathtt p}}(\beta)\neq\emptyset$. Hence $\alpha=\beta$ by
Proposition~\ref{prop:eq_normal_forms_vs_eq_expansions}.
\end{proof}

In particular, we derive from this result that the canonical form reduction algorithm
applied to any $\bk$-term produces a unique $\bk$-term in canonical form. It also leads to an easy deduction of
the main results of this paper.
\begin{theorem}\label{theo:decid_k_word_problem}
 The $\bk$-word problem for  ${\bf S}$ is decidable. More precisely, given $\bk$-terms $\alpha$ and $\beta$, the canonical form
reduction algorithm can be used to decide whether ${\bf S}$
satisfies $\alpha=\beta$.
\end{theorem}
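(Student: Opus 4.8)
The plan is to reduce the decidability of the $\bk$-word problem for $\Se$ to the already-established machinery: the canonical form reduction algorithm of Section~\ref{section:algorithm_normal_form} together with the uniqueness statement of Theorem~\ref{theo:canonical_form}. First I would observe that the algorithm described in Step~1 and Step~2 is genuinely effective: it proceeds by recursion on the rank, and at each rank it performs only finitely many contractions, expansions and shifts, each of which is a purely syntactic manipulation of the well-parenthesized word over $A_\Z$ representing the term; moreover Proposition~\ref{prop:limit_terms_reduction} and Lemmas~\ref{lemma:normalization_of_crucial_portion} and~\ref{lemma:normalization_of_inicial-final_portion} guarantee that the recursive calls are made on terms of strictly smaller rank, so the procedure terminates. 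Hence, given any $\bk$-term $\alpha$, we can compute a $\bk$-term $\alpha'$ in canonical form with $\alpha\sim\alpha'$, and since every $\bk$-identity in $\Sigma$ holds in $\Se$ we have $\Se\models\alpha=\alpha'$.

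Next, given two $\bk$-terms $\alpha$ and $\beta$, the decision procedure is simply: compute their canonical forms $\alpha'$ and $\beta'$ using the algorithm, and check whether $\alpha'$ and $\beta'$ are literally the same well-parenthesized word over $A_\Z$ (a trivially decidable test). I claim this correctly decides whether $\Se\models\alpha=\beta$. For the ``if'' direction, if $\alpha'=\beta'$ as words then $\alpha\sim\alpha'=\beta'\sim\beta$, so $\Se\models\alpha=\beta$. For the ``only if'' direction, suppose $\Se\models\alpha=\beta$; since $\Se\models\alpha=\alpha'$ and $\Se\models\beta=\beta'$ we get $\Se\models\alpha'=\beta'$, and now Theorem~\ref{theo:canonical_form} applies to the two canonical forms $\alpha'$ and $\beta'$ and yields $\alpha'=\beta'$ as $\bk$-terms. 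This also shows the canonical form is unique, justifying the phrase ``the canonical form of $\alpha$''.

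The only point requiring a little care — and the step I expect to be the main obstacle in a fully rigorous write-up — is verifying that the two-step algorithm really does always terminate and always outputs a term satisfying all of conditions~\ref{item:cf-1}--\ref{item:cf-4}, i.e.\ that the informal descriptions of Step~1 and Step~2 assemble into an honest recursive algorithm with a well-founded termination measure. Most of this has in fact already been checked in Section~\ref{section:algorithm_normal_form}: the closing remarks after Step~2 verify that Step~2 produces a canonical form, the remarks after Step~1 verify that Step~1 produces a semi-canonical form, and Proposition~\ref{prop:limit_terms_reduction} controls what happens to rank-$i+1$ limit terms under Step~1. So here it suffices to assemble these facts and note that the recursion on rank, combined with the finiteness of each rank's worth of reductions, gives termination. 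The remainder of the argument is then immediate from Theorem~\ref{theo:canonical_form}.
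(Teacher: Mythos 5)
Your proposal is correct and follows essentially the same route as the paper: compute the canonical forms $\alpha'$ and $\beta'$ via the reduction algorithm, note that $\Se$ verifies $\alpha=\alpha'$ and $\beta=\beta'$ by construction, and then invoke Theorem~\ref{theo:canonical_form} to reduce the question to syntactic equality of $\alpha'$ and $\beta'$. Your added remarks on termination and effectiveness of the algorithm only make explicit what the paper leaves implicit in Section~\ref{section:algorithm_normal_form}.
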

\begin{proof} Let $\alpha'$ and $\beta'$ be canonical forms obtained,
respectively, from $\alpha$ and $\beta$ by the application of the canonical form reduction algorithm. By
construction of the algorithm, ${\bf S}$ verifies $\alpha=\alpha'$ and $\beta=\beta'$. In view of
Theorem~\ref{theo:canonical_form}, to decide whether $\Se$
 verifies $\alpha=\beta$ it suffices therefore to verify whether $\alpha'$ and $\beta'$ are the same $\bk$-term.
\end{proof}

\begin{theorem}\label{theo:kappa_basis_S}
 The set $\Sigma$ is a basis of $\bk$-identities for $\Se^{\bk}$, the $\bk$-variety generated by all finite semigroups.
\end{theorem}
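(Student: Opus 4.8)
The plan is to prove the equality $\mathbf W=\Se^{\bk}$, where $\mathbf W$ denotes the $\bk$-variety of all $\bk$-semigroups that satisfy every $\bk$-identity in $\Sigma$; this is exactly the assertion that $\Sigma$ is a basis of $\bk$-identities for $\Se^{\bk}$. One inclusion is routine: each finite semigroup, viewed as a $\bk$-semigroup via the natural interpretation of the operations in $\bar\kappa$, satisfies all the identities of $\Sigma$ (as already observed in Section~\ref{section:normal_forms}), and since the satisfaction of a $\bk$-identity is preserved under taking sub-$\bk$-semigroups, homomorphic images and products, the whole $\bk$-variety $\Se^{\bk}$ generated by $\Se$ satisfies $\Sigma$. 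Hence $\Se^{\bk}\subseteq\mathbf W$.

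For the reverse inclusion I would use the equational characterization of $\Se^{\bk}$: by Birkhoff's theorem, in the (infinitary) signature $\bar\kappa$, the $\bk$-variety $\Se^{\bk}$ is precisely the class of $\bk$-semigroups satisfying all $\bk$-identities valid in $\Se$. So it suffices to show that every $\bk$-identity $\alpha=\beta$ with $\Se\models\alpha=\beta$ is a consequence of $\Sigma$; equivalently, since deducibility from $\Sigma$ in equational logic is precisely the relation $\sim$ generated by contractions, expansions and shifts, that $\Se\models\alpha=\beta$ implies $\alpha\sim\beta$. This is where the machinery of the previous sections is used. Given $\bk$-terms $\alpha$ and $\beta$ over a finite alphabet with $\Se\models\alpha=\beta$, apply the canonical form reduction algorithm of Section~\ref{section:algorithm_normal_form} to each of them, obtaining canonical forms $\alpha'$ and $\beta'$. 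Since every elementary change performed by the algorithm is a contraction, an expansion or a shift, we have $\alpha\sim\alpha'$ and $\beta\sim\beta'$, and therefore $\Se\models\alpha'=\beta'$ because all identities of $\Sigma$ hold in $\Se$. By Theorem~\ref{theo:canonical_form}, $\alpha'$ and $\beta'$ are then literally the same $\bar\kappa$-term, so $\alpha\sim\alpha'=\beta'\sim\beta$ and hence $\alpha\sim\beta$. This proves $\mathbf W\subseteq\Se^{\bk}$, and the theorem follows.

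An equivalent and perhaps more suggestive way to organise the argument is at the level of relatively free objects. The relation $\sim$ is, by construction, the fully invariant congruence on $T_A^{\bk}$ generated by $\Sigma$, so $T_A^{\bk}/{\sim}$ is the free $\bk$-semigroup on $A$ in $\mathbf W$. The canonical morphism $\eta\colon T_A^{\bk}\to\omek{\bk}{A}{\Se}$ factors through $\sim$, since $\alpha\sim\beta$ implies $\Se\models\alpha=\beta$, which in turn forces $\eta(\alpha)=\eta(\beta)$; and the computation above, combined with Theorem~\ref{theo:canonical_form}, shows that the resulting surjection $T_A^{\bk}/{\sim}\twoheadrightarrow\omek{\bk}{A}{\Se}$ is injective. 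Thus the free $\bk$-semigroups on $A$ in $\mathbf W$ and in $\Se^{\bk}$ coincide for every finite alphabet $A$, which again gives $\mathbf W=\Se^{\bk}$ (this is essentially a restatement of Theorem~\ref{theo:decid_k_word_problem}, now read as an identification of free objects).

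I do not anticipate a genuine difficulty: the substantive work is already contained in the canonical form algorithm (Section~\ref{section:algorithm_normal_form}) and in Theorem~\ref{theo:canonical_form}. The only points demanding some attention are the bookkeeping observation that each step of the reduction algorithm really is an application of a rule obtained by reading one of the identities of $\Sigma$ in one direction or the other — so that $\alpha\sim\alpha'$ holds for the output $\alpha'$ — and the standard, but worth stating, fact that Birkhoff's correspondence between varieties and identities (equivalently, fully invariant congruences) carries over verbatim to the infinite signature $\bar\kappa$.
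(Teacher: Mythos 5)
Your proposal is correct and follows essentially the same route as the paper: both reduce the theorem to the equivalence ``$\Se\models\alpha=\beta$ iff $\alpha\sim\beta$'' and establish the nontrivial direction by passing to canonical forms via the reduction algorithm (which only uses $\Sigma$-rewritings) and invoking Theorem~\ref{theo:canonical_form}. The extra remarks on Birkhoff's correspondence and on free objects are sound elaborations of what the paper leaves implicit, not a different argument.
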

\begin{proof} Recall that the rewriting rules used in the canonical form reduction
algorithm are determined by the $\bk$-identities of $\Sigma$. Hence, it suffices to prove that, for all
$\bk$-terms $\alpha$ and $\beta$, $\Se\models \alpha=\beta$ if and only if $\alpha\sim \beta$. That $\alpha\sim
\beta$ implies $\Se\models \alpha=\beta$ follows from the fact that $\Se$ verifies all the $\bk$-identities of
$\Sigma$. To show the reverse implication, suppose that $\Se\models \alpha=\beta$ and let $\alpha'$ and $\beta'$
be the canonical forms of $\alpha$ and $\beta$. As ${\bf S}$ verifies $\alpha=\alpha'$ and $\beta=\beta'$, it
also verifies $\alpha'=\beta'$. By Theorem~\ref{theo:canonical_form} we deduce that $\alpha'=\beta'$.  Since
$\alpha\sim\alpha'$ and $\beta\sim\beta'$ it follows by transitivity that $\alpha\sim\beta$.
\end{proof}

The instance of Theorem~\ref{theo:canonical_form} in which $\alpha$ and $\beta$ have rank at most 1 was proved,
in a different way, by the author together with Nogueira and Teixeira in~\cite{Costa&Nogueira&Teixeira:2013}.
Moreover, we have shown in that paper that the pseudovariety $\LG$ does not identify different canonical forms
of rank at most $1$. It is, however, well-known that $\LG$ identifies the canonical forms $(a^\omega b)^\omega
a^\omega$ and $a^\omega$. This remark suggests the introduction of the notion of {\em $\bk$-index of a
pseudovariety $\V$}, denoted $i_{\bk}(\V)$, as being:  the least integer $j\geq 0$, whenever it exists, such that $\V$
identifies two different canonical forms of rank at most $j$; $+\infty$, otherwise. So, $i_{\bk}(\LG)=2$ and
$i_{\bk}(\Se)=+\infty$. The pseudovarieties of $\bk$-index $0$ are, by definition, the ones that verify some
nontrivial identity. For easy examples of $\bk$-index $1$, we may refer the pseudovarieties  $\G$ of groups,
$\Nil$ of nilpotent semigroups and $\A$. As $\Nil\subseteq \A$, it follows that  $i_{\bk}(\V)=1$ for every
aperiodic pseudovariety $\V$ containing $\Nil$. For an integer $j>2$, the author is not aware of examples of
pseudovarieties having $\bk$-index $j$.

An unary implicit signature is a signature formed by unary non-explicit implicit operations together with
multiplication. For instance, the signatures $\omega$, $\kappa$ and $\bk$ are unary. The above notion can be
extended to any unary implicit signature $\sigma$, for which there is a natural definition of rank for
$\sigma$-terms, as follows. For a pseudovariety $\V$, let $$\begin{array}{rl} I_\sigma(\V)=\{j\geq
0:&\mbox{\hspace*{-2mm}there exist
$\sigma$-terms $\alpha$ and $\beta$ with rank at most $j$ }\\
&\mbox{\hspace*{-2mm}such that $\V\models \alpha=\beta$ and $\Se\;\not\!\models \alpha=\beta$}\}.
\end{array}$$
The {\em $\sigma$-index of $\V$}, denoted $i_\sigma(\V)$, is defined to be $\min I_\sigma(\V)$ when $I_\sigma(\V)$ is non-empty
and to be $+\infty$ otherwise.

\section{Canonical representatives for $\omega$-terms over $\A$}\label{section:representatives_for_w-terms_overA}
In this section, we explain how the above results can be adjusted in order to obtain canonical representatives
for each class of $\omega$-terms with the same interpretation on each finite aperiodic semigroup.

\paragraph{\textbf{The canonical form algorithm for $\omega$-terms over $\A$.}} In Section~\ref{section:algorithm_normal_form}, we presented an algorithm that computes the canonical form of
any given $\bk$-term. In particular, for an $\omega$-term $\alpha$ the algorithm provides a unique $\bk$-term
$\alpha'$ in canonical form such that $\Se\models\alpha=\beta$ and, so, such that $\A\models\alpha=\beta$. As
far as the $\omega$-word problem over $\A$ is concerned, the trouble is that $\alpha'$ does not have to be an
$\omega$-term. In effect this is not a difficulty since in order to solve the word problem for $\kappa$-terms
over $\Se$, we also went outside the world of $\kappa$-terms. The real trouble is that $\omega$-terms with the
same value over $\A$ can have different canonical forms (when the $\omega$-terms are different over $\Se$). This is the case, for instance, of the $\omega$-terms
$a^\omega ab^\omega$ and $a^\omega bb^\omega$ whose canonical forms are respectively $a^{\omega+1} b^\omega$ and
$a^\omega b^{\omega+1}$.

An algorithm that computes, for each $\omega$-term $\alpha$, a unique  $\omega$-term $\alpha'$ in canonical
form with the same value over $\A$ can, however, be easily adapted from the algorithm in
Section~\ref{section:algorithm_normal_form}. The $\omega$-term $\alpha'$ will then be called the {\em canonical
form of $\alpha$ over $\A$}. For that, it suffices to replace everywhere in the algorithm each
occurrence of a symbol  $\;\lp{q}\;$ or $\;\rp{q}\;$ by, respectively, $\;\lp{0}\;$ and $\;\rp{0}\;$. This way
all terms involved are $\omega$-terms and this new algorithm preserves the value of the original $\omega$-term $\alpha$ over $\A$. Indeed, the elementary
changes are determined by the following rules for $\omega$-terms, obtained  from the rewriting rules for $\bk$-terms of Section~\ref{section:normal_forms} by the replacement of the symbols  $\;\lp{q}\;$ and $\;\rp{q}\;$ by $\;\lp{0}\;$ and $\;\rp{0}\;$,
\begin{xalignat*}{2}
&1.~\lpp{0\;\!}\ \lp{\;\!0}\alpha\rp{0\;\!}\ \rpp{0}\ \ \rightleftarrows\ \ \lp{0}\alpha\rp{0}&
&4_R.~ \lp{0}\alpha\rp{0}\alpha\  \rightleftarrows\ \ \; \lp{0}\alpha\rp{0}\\[-1mm]
&2.~ \lp{0}\alpha^n\rp{0}\ \ \rightleftarrows\ \ \lp{0}\alpha\rp{0}&
&4_L.~\, \alpha\lp{0}\alpha\rp{0}\ \ \rightleftarrows\ \ \; \lp{0}\alpha\rp{0} \\[-1mm]
&3.~ \lp{0}\alpha\rp{0}\ \;\lp{0}\alpha\rp{0}\ \ \rightleftarrows\ \  \;\! \lp{0}\alpha\rp{0}\ \ & &5.~\
\;\,\lp{0}\alpha\beta\rp{0}\alpha\ \rightleftarrows\ \alpha\lp{0}\beta\alpha\rp{0}
\end{xalignat*}
Actually, these are precisely the rules used in McCammond's
algorithm. Our algorithm for $\omega$-terms over $\A$ is essentially the same as McCammond's
algorithm except in the procedure to put the crucial portions in canonical form (in
view of their distinct definitions). For instance, the canonical form over $\A$ of the $\omega$-terms
$a^\omega ab^\omega$ and $a^\omega bb^\omega$ is $a^{\omega} b^\omega$, while their McCammond's normal form is $a^{\omega}ab b^\omega$.

\paragraph{\textbf{Star-freeness of the languages $L_{{\mathtt n},1}(\alpha)$.}} The star-freeness of
the languages $L_{\mathtt n}(\alpha)$, for $\omega$-terms $\alpha$ in McCammond's normal form and ${\mathtt n}$
large enough, was established in~\cite[Theorem~5.1]{Almeida&Costa&Zeitoun:2012}. For canonical forms an
identical property holds.
\begin{theorem}\label{theo:star-free}
  Let $\alpha$ be an $\omega$-term in canonical form and let ${\mathtt n}\geq\mu(\alpha)$. Then the
  language $L_{{\mathtt n},1}(\alpha)$ is star-free.
\end{theorem}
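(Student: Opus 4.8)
The plan is to mimic the proof of the analogous statement in~\cite{Almeida&Costa&Zeitoun:2012} (Theorem~5.1 there), proceeding by induction on $\rk{\alpha}$ using the recursive structure of $L_{{\mathtt n},1}$ supplied by Lemma~\ref{lemma:Lnp}. The base case $\rk{\alpha}=0$ is immediate since $L_{{\mathtt n},1}(\alpha)=\{\alpha\}$, a finite (hence star-free) language. For the inductive step, write $\alpha=\gamma_0\delta_1^{\omega+0}\gamma_1\cdots\delta_r^{\omega+0}\gamma_r$ in canonical form (all exponents being $0$ for an $\omega$-term), so by Lemma~\ref{lemma:Lnp}~\ref{item:Ln-3} and~\ref{item:Ln-4} we have
\begin{equation*}
L_{{\mathtt n},1}(\alpha)=L_{{\mathtt n},1}(\gamma_0)\,L_{{\mathtt n},1}(\delta_1)^{\mathtt n}\bigl(L_{{\mathtt n},1}(\delta_1)\bigr)^*L_{{\mathtt n},1}(\gamma_1)\cdots L_{{\mathtt n},1}(\delta_r)^{\mathtt n}\bigl(L_{{\mathtt n},1}(\delta_r)\bigr)^*L_{{\mathtt n},1}(\gamma_r).
\end{equation*}
By Proposition~\ref{prop:canonicalform_vs_subterms} each primary subterm $\gamma_j$ and $\delta_k$ is itself in canonical form, and since $\mathtt{n}\ge\mu(\alpha)\ge\mu(\gamma_j),\mu(\delta_k)$ (using that these are subterms, hence expansions of subterms, and Lemma~\ref{lemma:parameter_mu} together with the monotonicity of $\mu$ under taking subterms), the induction hypothesis gives that every $L_{{\mathtt n},1}(\gamma_j)$ and $L_{{\mathtt n},1}(\delta_k)$ is star-free. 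Since the star-free languages form a Boolean algebra closed under concatenation, the only thing that is not immediately a star-free combination of these pieces is each Kleene-star block $\bigl(L_{{\mathtt n},1}(\delta_k)\bigr)^*$.

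The heart of the proof, therefore, is to show that each such block, in the context in which it appears inside $L_{{\mathtt n},1}(\alpha)$, can be rewritten star-free. Following~\cite{Almeida&Costa&Zeitoun:2012}, the key is that $\delta_k$ is a Lyndon term (condition~\ref{item:cf-2}), hence primitive and unbordered, and that conditions~\ref{item:cf-3} and~\ref{item:cf-4} prevent the base $\delta_k$ from being absorbed on either side by the neighbouring factors $\gamma_{k-1}$ and $\gamma_k\delta_{k+1}^\ell$. Concretely, one shows that for $\mathtt{n}$ large a word lies in the relevant sub-block of $L_{{\mathtt n},1}(\alpha)$ precisely when, after the mandatory prefix coming from $L_{{\mathtt n},1}(\delta_k)^{\mathtt n}$, it contains a maximal run of copies of (an expansion of) $\delta_k$ that is delimited unambiguously by the unborderedness of $\delta_k$ and by the canonical-form conditions. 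This delimitation lets one express $L_{{\mathtt n},1}(\delta_k)^{\mathtt n}\bigl(L_{{\mathtt n},1}(\delta_k)\bigr)^*$ as the set of words having $L_{{\mathtt n},1}(\delta_k)^{\mathtt n}$ as a prefix and not containing, immediately after the forced part, any occurrence pattern incompatible with being a power of $\delta_k$ — a condition of the form ``has prefix $X$ and avoids the finite set of factors $Y$'', which is star-free. This is exactly the place where the change in the canonical-form definition relative to McCammond's normal form has to be accommodated: our crucial portions let each limit term absorb all adjacent copies of its base (even overlapping ones on the right), so the combinatorial lemma separating the runs must be re-examined; I expect this re-examination, rather than any new idea, to be the main obstacle, and it should go through because conditions~\ref{item:cf-3}--\ref{item:cf-4} were designed precisely to keep the bases ``isolated''.

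Once each Kleene-star block is shown to contribute only star-free behaviour, the whole expression for $L_{{\mathtt n},1}(\alpha)$ displayed above becomes a finite combination, using concatenation, union, complementation and the operations ``has a given prefix/suffix'' and ``avoids a given finite set of factors'', of the star-free languages $L_{{\mathtt n},1}(\gamma_j)$ and $L_{{\mathtt n},1}(\delta_k)$; hence $L_{{\mathtt n},1}(\alpha)$ is star-free, completing the induction. I would organize the write-up by first isolating the combinatorial statement about isolated runs of a Lyndon base as a separate lemma (the analogue of the corresponding lemma in~\cite{Almeida&Costa&Zeitoun:2012}), proving it from Propositions~\ref{prop:Lyndon_less_than_suffix} and~\ref{prop:fine_wilf} and conditions~\ref{item:cf-2}--\ref{item:cf-4}, and then deducing Theorem~\ref{theo:star-free} by the induction sketched here.
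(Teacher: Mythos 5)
Your proposal follows exactly the route the paper itself takes: the paper gives no self-contained proof of Theorem~\ref{theo:star-free}, stating only that it is obtained by adjusting the proof of~\cite[Theorem~5.1]{Almeida&Costa&Zeitoun:2012}, and your induction on $\rk{\alpha}$ via Lemma~\ref{lemma:Lnp}, with the work concentrated in showing that each block $L_{{\mathtt n},1}(\delta_k)^{\mathtt n}\bigl(L_{{\mathtt n},1}(\delta_k)\bigr)^*$ is star-free in context using the Lyndon/unbordered properties and conditions~\ref{item:cf-2}--\ref{item:cf-4}, is precisely that adaptation. Your sketch correctly identifies where the crucial-portion redefinition forces the combinatorial run-isolation lemma to be re-examined, which is the same (and only) point of divergence from~\cite{Almeida&Costa&Zeitoun:2012} that the paper acknowledges.
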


The proof of this result can be obtained by a mere adjustment of the
corresponding proof
of~\cite[Theorem~5.1]{Almeida&Costa&Zeitoun:2012} and, so, we do not
include it here. Actually, since each subterm of a canonical form is
in canonical form as well, the arguments can be usually simplified.
As a consequence of Theorem~\ref{theo:star-free} and of
 Proposition~\ref{prop:eq_normal_forms_vs_eq_expansions}, with  ${\mathtt
 p}=1$, one gets the following analogue of Theorem~\ref{theo:canonical_form}, that establishes the uniqueness of canonical forms for
$\omega$-terms over $\A$.
\begin{theorem}\label{theo:word-problem_over A}
Let $\alpha$ and $\beta$ be $\omega$-terms in canonical form. If
$\A\models\alpha=\beta$, then $\alpha=\beta$.
\end{theorem}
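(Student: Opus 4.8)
The plan is to transcribe the proof of Theorem~\ref{theo:canonical_form}, replacing the use of regularity of the languages $L_{\mathtt n,\mathtt p}$ and of the pseudovariety $\Se$ by the stronger star-freeness provided by Theorem~\ref{theo:star-free} and by the pseudovariety $\A$. We take $\mathtt p=1$. Since $\alpha$ and $\beta$ are $\omega$-terms, the only exponent occurring is $0$, so $\nu(\alpha)=\nu(\beta)=0$ and the scheme condition ``$\mathtt p>2\nu$'' holds vacuously; also every $\mathtt n$ is a multiple of $1$. Hence, choosing $\mathtt n$ with $\mathtt n-1>\max\{\mu(\alpha),\mu(\beta),|\alpha|,|\beta|\}$ makes $(\mathtt n,1)$ a scheme for both $\alpha$ and $\beta$ satisfying the hypotheses of Proposition~\ref{prop:eq_normal_forms_vs_eq_expansions}, and in particular $\mathtt n\geq\mu(\alpha),\mu(\beta)$, so Theorem~\ref{theo:star-free} applies to both $L_{\mathtt n,1}(\alpha)$ and $L_{\mathtt n,1}(\beta)$.

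First I would record, exactly as in~\eqref{ex:etabeta_clL}, that $\eta(\alpha)\in\mbox{cl}(L_{\mathtt n,1}(\alpha))$ and $\eta(\beta)\in\mbox{cl}(L_{\mathtt n,1}(\beta))$: the sequence of words obtained from $\alpha$ by replacing each $\omega$-power by a power of exponent $k!$ converges to $\eta(\alpha)$ in $\OAS$, and with $\mathtt p=1$ every integer $\geq\mathtt n$ is an admissible exponent, so this sequence is ultimately contained in $L_{\mathtt n,1}(\alpha)$. Next comes the essential point, the one that genuinely uses star-freeness rather than mere regularity: for an $\A$-recognizable (equivalently, star-free) language $L\subseteq A^+$, the closure $\mbox{cl}(L)$ in $\OAS$ is clopen and of the form $\mbox{cl}(L)=p_{\A}^{-1}(K)$ for some clopen $K\subseteq\overline{\Omega}_A\A$, where $p_{\A}\colon\OAS\to\overline{\Omega}_A\A$ is the canonical projection (see~\cite{Almeida:1995}). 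Now $\A\models\alpha=\beta$ means precisely $p_{\A}(\eta(\alpha))=p_{\A}(\eta(\beta))$; so from $\eta(\alpha)\in\mbox{cl}(L_{\mathtt n,1}(\alpha))=p_{\A}^{-1}(K_\alpha)$ we get $p_{\A}(\eta(\beta))=p_{\A}(\eta(\alpha))\in K_\alpha$, whence $\eta(\beta)\in p_{\A}^{-1}(K_\alpha)=\mbox{cl}(L_{\mathtt n,1}(\alpha))$. Therefore $\eta(\beta)$ lies in $\mbox{cl}(L_{\mathtt n,1}(\alpha))\cap\mbox{cl}(L_{\mathtt n,1}(\beta))$, which is thus a nonempty open subset of $\OAS$ and so meets the dense set $A^+$. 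Since $\mbox{cl}(L)\cap A^+=L$ for every $L\subseteq A^+$, this gives $L_{\mathtt n,1}(\alpha)\cap L_{\mathtt n,1}(\beta)\neq\emptyset$, and Proposition~\ref{prop:eq_normal_forms_vs_eq_expansions} with $\mathtt p=1$ yields $\alpha=\beta$.

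The main obstacle is exactly the middle step: justifying that the closure of a star-free language is saturated by the projection onto $\overline{\Omega}_A\A$. This is where Theorem~\ref{theo:star-free} is indispensable; for an arbitrary regular language the closure is saturated only for $\Se$ (as used in the proof of Theorem~\ref{theo:canonical_form}), which would not allow one to pass from $\A\models\alpha=\beta$ to a common word of the two languages. Once this is in place the argument is a routine adaptation of the proof of Theorem~\ref{theo:canonical_form}, the only genuinely new bookkeeping being the observation that for $\omega$-terms the scale vanishes, so that $\mathtt p=1$ is a legitimate choice of modulus and all the relevant statements apply.
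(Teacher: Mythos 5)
Your proposal is correct and is essentially the proof the paper intends: the paper omits it precisely because it is the analogue of Theorem~\ref{theo:canonical_form} with regularity over $\Se$ replaced by star-freeness over $\A$ (Theorem~\ref{theo:star-free}) and with ${\mathtt p}=1$, which is exactly what you carry out. You also correctly isolate the one point where star-freeness is genuinely needed, namely that $\mbox{cl}(L_{{\mathtt n},1}(\alpha))$ is saturated by the projection onto $\overline{\Omega}_A{\bf A}$, so that $\A\models\alpha=\beta$ forces the two closures to meet.
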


Once again, we omit the proof of this result since it is identical to the proof of the corresponding
result~\cite[Corollary~5.4]{Almeida&Costa&Zeitoun:2012} for McCammond's normal forms (and it is analogous to the
one of Theorem~\ref{theo:canonical_form}).

\section*{Acknowledgments}

I would like to thank Jorge Almeida for asking me about the decidability problem which is the object of this
paper. I also benefited greatly from our joint work with Marc Zeitoun about the fascinating McCammond's normal
forms.

This work was supported by the European Regional Development Fund, through the programme COMPETE, and by the
Portuguese Government through FCT -- \emph{Funda\c c\~ao para a Ci\^encia e a Tecnologia}, under the project
PEst-C/MAT/UI0013/2011.


\end{document}